\numberwithin{equation}{section}
\g@addto@macro\normalsize{%
  \setlength\abovedisplayskip{4pt}
  \setlength\belowdisplayskip{4pt}
  \setlength\abovedisplayshortskip{4pt}
  \setlength\belowdisplayshortskip{4pt}
}
\crefname{section}{Section}{Sections}
\crefname{subsection}{Subsection}{Subsections}
\crefname{condition}{Condition}{Conditions}
\crefname{hypothesis}{Hypothesis}{Conditions}
\crefname{assumption}{Assumption}{Assumptions}
\crefname{lemma}{Lemma}{Lemmas}
\crefname{definition}{Definition}{Definitions}
\newtheorem{theorem} {Theorem}[section]
\newtheorem{proposition}[theorem]{Proposition}
\newtheorem{lemma}[theorem]{Lemma}
\newtheorem{corollary}[theorem]{Corollary}
\newtheorem{counter example}[theorem]{Counter Example}
\newtheorem{remark}[theorem] {Remark}
\newtheorem{definition}[theorem] {Definition}
\newtheorem{assumption}[theorem]{Assumption}
\def\N{\mathbb{N}}
\def\CC{{\rm \kern.24em \vrule width.02em height1.4ex depth-.05ex \kern-.26emC}}
\def\TagOnRight
\def\AA{{it I} \hskip-3pt{\tt A}}
\def\QQ{\rlap {\raise 0.4ex \hbox{$\scriptscriptstyle |$}} {\hskip -0.1em Q}}
\newcommand{\vo}{\vec{o}\@ifnextchar{^}{\,}{}}
\def\YYint#1#2#3{{\setbox0=\hbox{$#1{#2#3}{\iint}$}
    \vcenter{\hbox{$#2#3$}}\kern-.50\wd0}}
\def\XXint#1#2#3{{\setbox0=\hbox{$#1{#2#3}{\int}$}
    \vcenter{\hbox{$#2#3$}}\kern-.50\wd0}}
\def\namedlabel#1#2{\begingroup
   \def\@currentlabel{#2}%
   \label{#1}\endgroup
}
\newcommand{\rmh}[1]{\mathpalette{\raisem@th{#1}}}
\newcommand{\raisem@th}[3]{\hspace*{-1pt}\raisebox{#1}{$#2#3$}}
\newcounter{desccount}
\newcommand{\descitem}[2]{\item[#1]\refstepcounter{desccount}\label{#2}}
\newcommand{\descref}[2]{\hyperref[#1]{\textcolor{black}{}\textcolor{blue}{ #2}\textcolor{black}{}}}
\newcommand{\dref}[2]{\hyperref[#1]{\textcolor{black}{(}\textcolor{blue}{\bf #2}\textcolor{black}{)}}}
\newcommand{\be} {\begin{eqnarray}}
\newcommand{\ee} {\end{eqnarray}}
\newcommand{\Bea} {\begin{eqnarray*}}
\newcommand{\Eea} {\end{eqnarray*}}
\newcommand{\pa} {\partial}
\newcommand{\re}{\mathbb{R}}
 \newcommand{\al} {\alpha}
\newcommand{\rr}{\rightarrow}
\newcommand{\B} {\beta}
\newcommand{\de} {\delta}
\newcommand{\g} {\gamma}
\newcommand{\p}  {\prime}
\newcommand{\e}  {\epsilon}
\newcommand{\De} {\Delta}
\newcommand{\la} {\lambda}
\newcommand{\si} {\sigma}
\newcommand{\f}{\infty}
\newcommand{\R}{\mathbb{R}}
\newcommand{\eps} {\epsilon}
\newcommand{\ga}{\gamma}
\newcommand{\Dom}{Q}
\DeclareMathOperator{\dv}{div}
\newcommand{\abs}[1]{\left| #1\right|}
\newcounter{whitney}
\newcounter{ineqcounter}
\def\ps@pprintTitle{%
\let\@oddhead\@empty
\let\@evenhead\@empty
\def\@oddfoot{}%
\let\@evenfoot\@oddfoot}
\newcommand{\refcheckize}[1]{%
  \expandafter\let\csname @@\string#1\endcsname#1%
  \expandafter\DeclareRobustCommand\csname relax\string#1\endcsname[1]{%
    \csname @@\string#1\endcsname{##1}\wrtusdrf{##1}}%
  \expandafter\let\expandafter#1\csname relax\string#1\endcsname
}
\newcommand{\mainsectionstyle}{%
	\renewcommand{\@secnumfont}{\bfseries}
	\renewcommand\section{\@startsection{section}{2}%
		\z@{.5\linespacing\@plus.7\linespacing}{-.5em}%
		{\normalfont\bfseries}}%
}
\xpatchcmd{\MaketitleBox}{\hrule}{}{}{}
\xpatchcmd{\MaketitleBox}{\hrule}{}{}{}
\date{}
\begin{document}

		\begin{frontmatter}
			\title{On the uniqueness of solutions to hyperbolic systems of conservation laws}
	
			\author[myaddress1]{Shyam Sundar Ghoshal}\ead{ghoshal@tifrbng.res.in}
			\author[myaddress1]{Animesh Jana}\ead{animesh@tifrbng.res.in}
			\author[myaddress3]{Konstantinos Koumatos}\ead{K.Koumatos@sussex.ac.uk}
			\address[myaddress1]	{Tata Institute of Fundamental Research,Centre For Applicable Mathematics,
				Sharada Nagar, Chikkabommsandra, Bangalore 560065, India.}
			\address[myaddress3]{Department of Mathematics, University of Sussex, Pevensey 2 Building, Falmer,
				Brighton, BN1 9QH, UK.}			
	\begin{abstract}
	 For general hyperbolic systems of conservation laws we show that dissipative weak solutions belonging to an appropriate Besov space $B^{\alpha,\f}_q$ and satisfying a one-sided bound condition are unique within the class of dissipative solutions. {The exponent $\alpha>1/2$ is universal independently of the nature of the nonlinearity and the Besov regularity need only be imposed in space when the system is expressed in appropriate variables.} The proof utilises a commutator estimate which allows for an extension of the relative entropy method to the required regularity setting. The systems of elasticity, shallow water magnetohydrodynamics, and isentropic Euler are investigated, recovering recent results for the latter. Moreover, the article explores a triangular system motivated by studies in chromatography and constructs an explicit solution which fails to be Lipschitz, yet satisfies the conditions of the presented uniqueness result.
\end{abstract}
		\end{frontmatter}
	\tableofcontents	
	\section{Introduction}
	For $\Omega\subset\R^d$ and $T>0$ arbitrary, consider the system of conservation laws
	\begin{equation}\label{eqn:conlaw}
	\pa_t U(x,t)+\sum\limits_{k=1}^{d}\pa_{k}f_k(U(x,t))=0 \mbox{ for }(x,t)\in \Omega\times[0,T],
	\end{equation}
	to be solved for the unknown function $U:\re^d\times\re_+\rr\re^m$, where $f_k:\re^m\rr\re^m$ are sufficiently smooth, constituent functions and $\partial_k = \partial/\partial x_k$. To avoid technical difficulties, we henceforth consider $\Omega = Q = [0,1]^d$, the $d$-dimensional unit torus, although this is not a restriction. 
	
	Moreover, we assume that system \eqref{eqn:conlaw} is supplemented by an inequality of the form
	\begin{equation}\label{eq:entropy_ineq}
	\pa_t \eta(U) + \pa_k q_k(U) \leq 0,
	\end{equation}
	where we have employed the Einstein summation convention. The function $\eta:  \re^m\rr\re$ is referred to as the entropy and $q=(q_1,\cdots,q_d):\re^m\rr\re^d$ as the entropy flux and it is assumed that they are related to the fluxes $f_k$ by
	 \begin{equation}
	D q_k(U) = D\eta(U)^{T}D f_k(U).\label{eq:entropy-entropy_flux}
	\end{equation}
Inequality \eqref{eq:entropy_ineq} is the Clausius-Duhem inequality and it expresses the second law of thermodynamics in the context of continuum mechanics. Note that any Lipschitz solution to \eqref{eqn:conlaw} satisfies the companion conservation law \eqref{eq:entropy_ineq} as an equality. 
		
	Entropies in physical systems are often convex and in this article we assume this to be the case. In fact, \eqref{eq:entropy-entropy_flux} implies
	\begin{equation}
	D^2\eta(U)Df_k(U) = Df_k(U)^T D^2\eta(U)\mbox{ for }k=1,\cdots,d,\label{eq:symmetric}
	\end{equation}
rendering \eqref{eqn:conlaw} a symmetrisable hyperbolic system upon the change of variables $U\mapsto D\eta(U)$ and thus locally well-posed, see \cite[section 3.2]{Daf_book}. That is, for initial data of sufficiently high regularity, there exists a unique \emph{strong} solution to \eqref{eqn:conlaw}, satisfying \eqref{eq:entropy_ineq} as an equality, on a generally finite time interval. We refer the reader to \cite{BB,Bressan-book} for global existence results in one spatial dimension.

{\color{black} More generally, we may consider systems of the form
\begin{equation}\label{eqn:conlawgen}
\pa_t A(U(x,t))+ \pa_{k}F_k(U(x,t))=0 \mbox{ for }(x,t)\in \Dom\times[0,T],
\end{equation}
for smooth mappings $A,\,F_k:\mathcal{O}\subset \re^m\rr\re^m$ where $\mathcal{O}$ is an open, convex set containing the range of admissible maps $U$ and $DA$ is nonsingular on $\mathcal{O}$. For system \eqref{eqn:conlawgen} we also assume the existence of an entropy-entropy flux pair $(H,Q_k)$, $k=1,\cdots, d$, meaning that there exists $G:\mathcal{O}\rr\re^m$ such that
 \begin{equation}\label{eq:entropy-entropy_flux gen}
D H(U) = G(U) DA(U),\mbox{ and }D Q_k(U) = G(U) D F_k(U).
\end{equation}
The above relations imply that
\begin{equation}\label{eq:symmetricgen}
D G(U)^T DA(U) = DA(U)^TDG(U)\mbox{ and }DG(U)^TDF_k(U) = DF_k(U)^TDG(U)
\end{equation}
and inequality \eqref{eq:entropy_ineq} now becomes
\begin{equation}\label{eq:entropy_ineq gen}
\pa_t H(U) + \pa_k Q_k(U) \leq 0.
\end{equation}
Moreover, convexity of $\eta$ is now replaced by the assumption $DG^T(U)DA(U)>0$, i.e. that the symmetric matrix $DG^T(U)DA(U)$ is positive-definite which, by \eqref{eq:entropy-entropy_flux gen}, is equivalent to
\begin{equation}\label{eq:convexity gen}
D^2H(U) - G(U)D^2A(U) > 0 \mbox{ for all }U\in\mathcal{O}.
\end{equation}
Note that we may recover system \eqref{eqn:conlaw} by setting $V = A(U)$, $f_k = F_k \circ A^{-1}$, $\eta = H\circ A^{-1}$ and $q_k = Q_k\circ A^{-1}$. Then, we formally compute that
\begin{align*}
\pa_t V + \pa_k f_k(V) & = 0\\
\pa_t \eta(V) + \pa_k q_k(V) & = 0.
\end{align*}
Moreover, \eqref{eq:entropy-entropy_flux gen} becomes equivalent to \eqref{eq:entropy-entropy_flux}, i.e.
\[
D q_k(U) = D\eta(U)^{T}D f_k(U)
\]
where $G(U) = D\eta(A(U))$ and the entropy $\eta$ satisfies $D^2\eta(V) > 0$ for $V\in A(\mathcal{O})$.} 
Systems of the form \eqref{eqn:conlawgen} are typical in continuum mechanics where the function $U$ may represent mass, momentum, energy, and other relevant quantities. The reader is referred to \cite{tzavaras_cleopatra,Daf_book,GKS} for examples, as well as Sections \ref{models}, \ref{sec:triangular} where we apply our result to model equations.

However, solutions to hyperbolic systems typically develop singularities in finite time, even if they emanate from smooth initial data, and thus weaker forms of solutions are sought, such as weak or measure-valued solutions \cite{Diperna_measure-valued}. Then, inequality \eqref{eq:entropy_ineq gen} is expected to serve as an admissibility criterion, singling out physically relevant solutions. {\color{black}Of course, for $m>1$, the existence of $H$, $Q$ satisfying \eqref{eq:entropy-entropy_flux gen} is not trivial. For some systems like the Euler equations or hyperelasticity, the energy of the system plays the role of the entropy and it is the dissipation of energy, i.e. $d\eta/dt  \leq 0$, that is often regarded as an admissibility criterion. We will do so here and make this precise in the following section. }

A natural question then arises regarding the uniqueness of these weaker notions of solutions under appropriate entropy-related admissibility criteria. For example, systems of the form \eqref{eqn:conlaw} endowed with a convex entropy, enjoy a weak-strong uniqueness property whereby any Lipschitz solution (referred to as strong) is unique within the class of (dissipative) weak solutions \cite{Daf_book}, see Definition \ref{def:weak} for the notion of dissipative solution. Similarly, the weak-strong uniqueness result can be extended to systems of the form \eqref{eqn:conlawgen} endowed with an entropy satisfying \eqref{eq:convexity gen}, see \cite{tzavaras_cleopatra}. These weak-strong uniqueness results are based on the relative entropy method, introduced by Dafermos \cite{Daf79} and DiPerna \cite{DiPerna}, which provides a way to estimate the difference between two solutions. The technique has been applied successfully to a number of problems, including extensions of weak-strong uniqueness results to measure-valued solutions \cite{BDS,GKS,Wid18}, convergence of discrete schemes to smooth solutions \cite{tzavaras,westdickenberg}, or relaxation problems \cite{LaTz}

Crucially however the method as originally presented relies on two facts: (a) the system must be endowed with a strictly convex entropy (respectively an entropy satisfying \eqref{eq:convexity gen}) and (b) one of the two solutions needs to enjoy Lipschitz regularity. Relaxing any of these assumptions is of relevance to physical problems and several extensions exist in the literature. For example, in relaxing convexity, the reader is referred to \cite{tzavaras-weak-strong,KS19} in the context of poly- or quasi-convex elasticity, or \cite{Daf86, KV} for conservation laws with involutions. 

In the present article, we focus on relaxing the latter assumption, that is the Lipschitz regularity of the strong solution. In the context of fluid dynamics, the question of uniqueness of shock-free solutions for the Riemann problem to the Euler system has been studied extensively \cite{CF01,ChFrLi,FeKeVa} and in 1-D the relative entropy method has been extended to prove uniqueness of shock wave solutions within a certain class of bounded solutions satisfying a trace property \cite{vasseur}. However, in higher dimensions and based on the theory of convex integration, introduced in this context by DeLellis and Sz{\'e}kelyhidi \cite{DeLS}, uniqueness seems to fail. Indeed, uniqueness fails even for solutions satisfying an energy inequality \cite{CDK,Chiodaroli}. It is important to note that these latter solutions, constructed in \cite{CDK,Chiodaroli}, emanate from planar Riemann data (one dimensional Riemann data extended as constants in the other dimension) containing at least one shock if seen as 1-D data. 

On the contrary, rarefaction solutions to the Riemann problem for compressible Euler remain unique in the class of bounded entropy solutions \cite{FeiKre}. More generally, it was shown recently that, for isentropic Euler, dissipative weak solutions enjoying a certain Besov regularity, and a one-sided Lipschitz condition on the velocity gradient, are unique within the class of weak solutions \cite{FGJ}. {\color{black}We note that the Besov regularity need only be assumed for $t\geq\de$ for every $\de>0$ and thus allows for discontinuous initial data.} This uniqueness result is also achieved via the relative entropy method combined with an appropriate commutator estimate which forces terms produced by regularising the Besov solution to vanish. Commutator estimates have been widespread in the modern literature of conservation laws including \cite{CET} for the Onsager conjecture on energy conservation for the incompressible Euler system, or \cite{FGGW} for compressible Euler, {\color{black} and \cite{GJ} for uniqueness results in the spirit of \cite{FGJ} for compressible Euler. }

In the present article, we employ an appropriate commutator estimate and extend the results of \cite{FGJ} and \cite{GJ} to general systems of conservation laws as in \eqref{eqn:conlawgen} satisfying the symmetrisability condition \eqref{eq:convexity gen} and certain mild assumptions on the functions $A$, $F_k$, $G$, and $H$. In particular, in Theorem \ref{theorem1}, we prove that bounded, dissipative solutions in the Besov space $B^{\alpha,\infty}_q$, $\alpha>1/2$, satisfying a certain one-sided bound condition, see \eqref{ineq:general}, are unique within the class of dissipative solutions. We stress the important 
fact that the exponent, $\alpha>1/2$, in the assumed Besov regularity is universal for general systems of the form \eqref{eqn:conlawgen} and that, in the case of system \eqref{eqn:conlaw}, we can prove our result without assuming any Besov regularity in time. This expands the set of solutions with the uniqueness property and becomes relevant in applications, see \cite{stochastic}. 

As an application of our general theorem, we investigate the isentropic Euler system - recovering the results of \cite{FGJ} - but also the system of conservation laws appearing in polyconvex elasticity and swallow water magnetohydrodynamics, examining the one-sided condition \eqref{ineq:general} in these systems. In discussing polyconvex elasticity, we first consider the system of elasticity under a convexity assumption and comment on the better understood one-dimensional case. As a further, nontrivial example we also explore a one-dimensional triangular system motivated by multi-component chromatography where, for any $\al\in (0,1)$, we construct a solution which lies in the H\"older space $C^{0,\al}$, yet is not Lipschitz, and satisfies the one-sided condition ensuring uniqueness. The construction is then extended to the multi-dimensional setting. Note that such nontrivial examples are lacking in the other systems examined.

The article is organised as follows: in Section \ref{section:prelims} we introduce the necessary terminology, we make our assumptions precise and present the commutator estimates used in the sequel. In Section \ref{sec:main}, we state and prove the main result of this article, whereas in Section \ref{models}, we study the fluid and solid models mentioned above. Section \ref{sec:triangular} is devoted to the construction of nontrivial examples for the triangular system, as well as the study of conditions allowing to extend solutions of one-dimensional problems to a multi-dimensional setting.


\section{Notation and preliminaries}\label{section:prelims}


We denote by $C^{k}(Q)$ the space of $k$-times continuously differentiable, $Q$-periodic functions and by $L^{p}(Q)$ the standard Lebesgue space of $Q$-periodic functions. Their norm is denoted by $\|\cdot\|_{L^{p}(Q)}$. In taking time into account, we consider the Bochner spaces $L^p(0,T;X)$, where $X$ is a Banach space, endowed with their standard norms.
We also denote by $C^{k}_c([0,T)]$ the space of $k$ times continuously differentiable functions, compactly supported on $[0,T)$, and naturally extended to define the space $C^k_c([0,T);C^k(Q))$.

Our main result on uniqueness concerns solutions that belong to an appropriate Besov space which we next define.

  \begin{definition}\label{def:besov}
    	Let $\alpha\in (0,1)$, $q\in [1,\infty)$ and $D\subset \R^M$ a bounded domain. Let $D_1\subset\re^M$ be open such that $\bar{D}\subset D_1$. The Besov space $B^{\alpha,\f}_q(D)$ is defined as the set of functions $g\in L^q(D)$ such that 
    	\begin{equation}
    	|g|_{B^{\alpha,\f}_q(D)}:=\sup\limits_{0\neq\xi\in\re^{M},D+\xi\subset D_1}\frac{\|g(\cdot+\xi)-g(\cdot)\|_{L^q(D)}}{\abs{\xi}^{\alpha}}<\f.
    	\end{equation} 
	$B^{\alpha,\f}_q(D)$ becomes a Banach space when equipped with the norm $\|\cdot\|_{B^{\alpha,\f}_q(D)} = \|\cdot\|_{L^q(D)} + |\cdot|_{B^{\alpha,\f}_q(D)}$. 
    \end{definition}
    Besov spaces enjoy the following property, see \cite{CET}: let $\zeta_\e$ be a sequence of mollifiers {\color{black} in space and time} and set $g_\e=g*\zeta_\e$. It holds that
        \begin{eqnarray}
    \|g_\e-g\|_{L^q(D)}&\leq &\e^{\al}|g|_{B^{\alpha,\f}_q(D)},\label{Est1}\\
    \|\nabla g_\e\|_{L^q(D)}&\leq &\e^{\al-1}|g|_{B^{\alpha,\f}_q(D)}.\label{Est2}
    \end{eqnarray}
    The estimates \eqref{Est1} and \eqref{Est2} result in the following lemma which is crucial in our analysis (see \cite{FGGW,FGJ} for a proof):
          \begin{lemma}[Commutator estimate \cite{CET,FGGW,FGJ}]\label{lemma_commutator}
    	Let $D\subset\R^M$ be a bounded domain. Let $D_1\subset\re^M$ be open such that $\bar{D}\subset D_1$. Suppose $w:D_1\to\R^m$ with $w\in B^{\alpha,\f}_{q}(D,\R^m)$ for $q\geq2$ and $\alpha\in(0,1)$. Let $\mathbb{B}\in C^2(K)$ where $K\subset\R^m$ be an open convex set containing the closure of the image of $w$. Let $\zeta_\e$ be a sequence of mollifiers with support in $\{\abs{x}<\e\}\subset \R^M$. Then 
    	\begin{equation}
    	\|\nabla_y(\mathbb{B}(w)_\e)-\nabla_y(\mathbb{B}(w_\e))\|_{L^{\frac{q}{2}}(D,\R^M)}\leq C\e^{2\alpha-1}\left(1+|w|_{B^{\alpha,\f}_q(D,\R^m)}^2\right)
    	\end{equation}
    	where $g_\e=g*\zeta_\e$ and $C=C(\|\mathbb{B}\|_{C^2(K)})$.
    \end{lemma}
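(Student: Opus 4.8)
The plan is to adapt the Constantin--E--Titi commutator argument (see \cite{CET,FGGW,FGJ}): expand $\mathbb{B}$ to second order and estimate the resulting Taylor remainder by means of \eqref{Est1} together with the difference-quotient characterisation of the Besov seminorm (the ingredient underlying \eqref{Est2}). Throughout, $\e$ is taken small enough that $D+h\subset D_1$ whenever $\abs{h}<\e$, so that every translate and mollification of $w$ below is well defined and the seminorm $\abs{w}_{B^{\alpha,\f}_q(D)}$ may be applied to translates $w(\cdot+h)$ with $\abs{h}<\e$; for $\e$ bounded away from $0$ the asserted bound is trivial after enlarging $C$. The hypothesis $q\geq 2$ enters precisely so that $L^{q/2}(D)$ is a genuine normed space, in which Minkowski's integral inequality holds.

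First I would recast the commutator as a single convolution integral. Since convolution commutes with differentiation, $\nabla_y(\mathbb{B}(w)_\e)=\mathbb{B}(w)*\nabla\zeta_\e$ and $\nabla w_\e=w*\nabla\zeta_\e$ pointwise (legitimate as $\mathbb{B}(w),\,w\in L^1_{\loc}(D_1)$ and $\zeta_\e\in C^\infty_c$). Combining this with $\nabla_y(\mathbb{B}(w_\e))=D\mathbb{B}(w_\e)\nabla w_\e$ and $\int\nabla\zeta_\e=0$, a direct computation gives, for $y\in D$,
\[
\nabla_y(\mathbb{B}(w)_\e)(y)-\nabla_y(\mathbb{B}(w_\e))(y)=\int\nabla\zeta_\e(y-z)\,R_y(w(z))\,dz ,\qquad R_y(v):=\mathbb{B}(v)-\mathbb{B}(w_\e(y))-D\mathbb{B}(w_\e(y))\bigl(v-w_\e(y)\bigr),
\]
$R_y$ being the second-order Taylor remainder of $\mathbb{B}$ at $w_\e(y)$ (the terms subtracted from $\mathbb{B}(w(z))$ are $z$-independent apart from $D\mathbb{B}(w_\e(y))w(z)$, whose integral against $\nabla\zeta_\e(y-z)$ reproduces $\nabla_y(\mathbb{B}(w_\e))(y)$). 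Taylor's theorem and convexity of $K$ give $\abs{R_y(v)}\leq\tfrac12\norm{\mathbb{B}}_{C^2(K)}\abs{v-w_\e(y)}^2$; together with $\abs{w(z)-w_\e(y)}^2\lesssim\abs{w(z)-w(y)}^2+\abs{w(y)-w_\e(y)}^2$ and the bounds $\abs{\nabla\zeta_\e}\lesssim\e^{-M-1}$ on $\{\abs{\cdot}<\e\}$ and $\norm{\nabla\zeta_\e}_{L^1}\lesssim\e^{-1}$, this yields the pointwise estimate
\[
\bigl|\nabla_y(\mathbb{B}(w)_\e)(y)-\nabla_y(\mathbb{B}(w_\e))(y)\bigr|\lesssim\norm{\mathbb{B}}_{C^2(K)}\left(\e^{-M-1}\int_{\abs{h}<\e}\abs{w(y+h)-w(y)}^2\,dh+\e^{-1}\abs{w(y)-w_\e(y)}^2\right).
\]

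Finally I would take $L^{q/2}(D)$ norms. For the first term, Minkowski's integral inequality and the definition of $\abs{\cdot}_{B^{\alpha,\f}_q(D)}$ give $\bigl\|\int_{\abs{h}<\e}\abs{w(\cdot+h)-w(\cdot)}^2\,dh\bigr\|_{L^{q/2}(D)}\leq\int_{\abs{h}<\e}\norm{w(\cdot+h)-w(\cdot)}_{L^q(D)}^2\,dh\leq\abs{w}_{B^{\alpha,\f}_q(D)}^2\int_{\abs{h}<\e}\abs{h}^{2\alpha}\,dh\lesssim\e^{2\alpha+M}\abs{w}_{B^{\alpha,\f}_q(D)}^2$, so after the prefactor $\e^{-M-1}$ the first term contributes $\lesssim\e^{2\alpha-1}\abs{w}_{B^{\alpha,\f}_q(D)}^2$; for the second term, $\bigl\|\abs{w-w_\e}^2\bigr\|_{L^{q/2}(D)}=\norm{w-w_\e}_{L^q(D)}^2\leq\e^{2\alpha}\abs{w}_{B^{\alpha,\f}_q(D)}^2$ by \eqref{Est1}, contributing $\lesssim\e^{2\alpha-1}\abs{w}_{B^{\alpha,\f}_q(D)}^2$ as well. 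Adding the two contributions and using $\abs{w}_{B^{\alpha,\f}_q(D)}^2\leq 1+\abs{w}_{B^{\alpha,\f}_q(D)}^2$ gives the result with $C=C(\norm{\mathbb{B}}_{C^2(K)})$. I do not expect a genuine obstacle: the estimate is standard and essentially contained in \cite{FGGW,FGJ}, and the only points needing care are the bookkeeping of the powers of $\e$ and the restriction to small $\e$ (so that $\abs{w}_{B^{\alpha,\f}_q(D)}$ applies to the translates $w(\cdot+h)$ with $\abs{h}<\e$); accordingly I would keep the write-up brief and defer the routine details to the cited references.
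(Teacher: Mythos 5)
Your proof is correct and reproduces the standard Constantin--E--Titi commutator argument; the paper itself gives no proof of the lemma and simply defers to \cite{FGGW,FGJ}, which use the same Taylor-remainder identity $\nabla(\mathbb{B}(w)_\e)-\nabla(\mathbb{B}(w_\e))=\int\nabla\zeta_\e(\cdot-z)\,R_\cdot(w(z))\,dz$ (exploiting $\int\nabla\zeta_\e=0$) and the same mollifier kernel bounds that you use. The only cosmetic difference is that you estimate the translation differences directly from the Besov seminorm rather than routing through \eqref{Est2}; this is equivalent and equally standard.
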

{\color{black}We refer the reader to \cite{BGSTW,Deb,GMS} for similar commutator estimates in the context of Onsager's conjecture on the energy/entropy equality for various systems.}


In the sequel, we consider dissipative solutions to system \eqref{eqn:conlawgen} which we now define. We recall that $\mathcal{O}\subset \R^m$ is an open, convex set and we later impose the assumption that the functions $A$, $F_k$, and $H$ are continuous on $\mathcal{O}$, see \descref{H0}{(H0)}.
	
\begin{definition}\label{def:weak}
Assume that $H(U_0) \in L^1(\Dom)$.
\begin{itemize}
\item We say that $U:Q\times(0,T)\to\overline{\mathcal{O}}$ is a weak solution to \eqref{eqn:conlawgen} with initial data $U_0$ if
\begin{equation}\label{eqn:weak_formulation}
\int\limits_{\Dom}A(U_0)\cdot\Psi(\cdot,0)\,dx+\int\limits_{0}^{\tau}\int\limits_{\Dom}\left(A(U)\cdot\pa_t\Psi+F_k(U)\cdot \pa_{k}\Psi\right)\,dxdt = \int\limits_{\Dom}A(U(x,\tau))\cdot\Psi(\cdot,\tau)\,dx,
\end{equation}
for all $\Psi\in C^1(Q\times [0,T])$ and a.a. $\tau\in [0,T)$.
\item We say that $U:Q\times(0,T)\to\overline{\mathcal{O}}$ is a \textit{dissipative solution} of \eqref{eqn:conlawgen} with initial data $U_0$ if $U$ is a weak solution and satisfies the dissipation inequality
	\begin{equation}\label{ineq:entropy}
	\int\limits_Q H(U(x,\tau))\,dx \leq \int\limits_Q H(U(x,s))\,dx \leq \int\limits_Q H(U_0(x))\,dx
	\end{equation}
	for a.a. $0 < s < \tau < T$. 
	\end{itemize}
\end{definition} 

{\color{black}
\begin{remark}\label{remark0}
We note that as soon as $\eta = H\circ A^{-1}$ is $L^p$ coercive, $p>1$, the dissipation inequality says that
\[
A(U) \in L^\f(0,T,L^p(Q)).
\]
Moreover, combined with the equations, $L^p$ coercivity of $\eta$ also asserts that
\[
A(U) \in {\color{black} C_{weak}(0,T;L^{p}(\Dom))},
\]
meaning that, as $s_n \to s$,
\[
\int_\Dom (A(U(\cdot, s_n)) - A(U(\cdot,s)))\cdot \Phi \to 0,\mbox{ for all }\Phi\in L^{\frac{p}{p-1}}(Q).
\]
\end{remark}

\begin{remark}\label{remark1}
The above definition is consistent with the definition of admissible solution for the isentropic Euler system found in \cite{FGJ}. In addition, for reasonable growth conditions (see \descref{H2}{(H2)}), the definition of dissipative solution follows from the standard definition that $H(U)\in L^\f(0,T;L^1(\Dom))$ and
\begin{equation}\label{eqn:weak_formulation_alt}
\int\limits_{\Dom}A(U_0)\cdot\Psi(\cdot,0)\,dx+\int\limits_{0}^{T}\int\limits_{\Dom}\left(A(U)\cdot\pa_t\Psi+F_k(U)\cdot \pa_{k}\Psi\right)\,dxdt = 0,
\end{equation}
for all $\Psi\in C^1_c([0,T);C^1(Q))$ with the dissipation inequality
\begin{equation}\label{ineq:entropy_alt}
\int_0^T\int_Q\frac{d\theta}{dt} H(U)\,dxdt + \int_Q\theta(0)H(U_0)\,dx \geq 0,
\end{equation}
for all nonnegative functions $\theta\in C^1_c([0,T))$. Indeed, as we will assume in \descref{H2}{(H2)} (see \eqref{eq:growth_condition}), suppose that
\[
 |F_k(\xi)| + |A(\xi)| \lesssim 1+ H(\xi) .
\]
For $0\leq \tau < T$ fixed, let $(\theta_j)\subset C^1_c([0,T))$ be a bounded sequence, approximating the function
\[
\theta(t) = \left\{\begin{array}{cc}
1,&t\in[0,\tau)\\
(\tau - t)/\delta + 1,&t\in[\tau,\tau+\delta)\\
0,&t\in [\tau+\delta,T)
\end{array}\right.
\]
such that $(\theta_j)$ is nonincreasing and $\dot\theta_j(t)\rightarrow\dot\theta(t)$ for all $t\neq \tau,\tau+\delta$. For simplicity, we also assume that $\theta_j(0)=1$ for all $j$. Then, given $\Phi\in C^1(Q\times [0,T])$, test \eqref{eqn:weak_formulation_alt} with $\Psi = \theta_j \Phi$ to infer that
\begin{align*}
\int\limits_{\Dom}A(U_0)\cdot\Phi(\cdot,0)\,dx+\int\limits_{0}^{T}\int\limits_{\Dom}\theta_j\left(A(U)\cdot\pa_t\Phi+F_k(U)\cdot \pa_{k}\Phi\right)\,dxdt =  \int\limits_{0}^{T}\int\limits_{\Dom}|\dot\theta_j| \,A(U)\cdot\Phi,
\end{align*}
where $\dot\theta = d\theta/dt$. Next note that, since $\theta_j$ is bounded in $C^1$, the functions
\[
t\mapsto |\dot\theta_j| \int\limits_Q A(U)\cdot \Phi\mbox{ and }t\mapsto \theta_j\int\limits_Q A(U)\cdot\pa_t\Phi+F_k(U)\cdot \pa_{k}\Phi
\]
are both bounded (up to a constant) by $\|\Psi\|_{C^1}\int_Q 1+ H(U)(t) \in L^\infty((0,T))$. Hence, by dominated convergence, we may take the limit $j\to\infty$ to infer that
\begin{align*}
\int\limits_{\Dom}A(U_0)\cdot\Phi(\cdot,0) +\int\limits_{0}^{\tau + \delta}\int\limits_{\Dom}\left(A(U)\cdot\pa_t\Phi+F_k(U)\cdot \pa_{k}\Phi\right) & + \frac{1}{\delta}\int\limits_{\tau}^{\tau + \delta}(\tau - t)\int\limits_{\Dom}A(U)\cdot\pa_t\Phi+F_k(U)\cdot \pa_{k}\Phi \\
& =  \frac{1}{\delta}\int\limits_{\tau}^{\tau+\delta}\int\limits_{\Dom} A(U)\cdot\Phi.
\end{align*}
Again due to the fact that $A(U)$, $F_k(U)\in L^\f(0,T;L^1(\Dom))$, the functions
\[
t\mapsto (\tau - t)\int\limits_{\Dom}A(U)\cdot\pa_t\Phi+F_k(U)\cdot \pa_{k}\Phi \mbox{ and } t\mapsto \int\limits_{\Dom} A(U)\cdot\Phi
\]
are integrable in $(0,T)$ and by Lebesgue's differentiation theorem we may take the limit $\delta\to 0$ to deduce that for a.a. $0\leq \tau < T$,
\begin{align*}
\int\limits_{\Dom}A(U_0)\cdot\Phi(\cdot,0) +\int\limits_{0}^{\tau}\int\limits_{\Dom}\left(A(U)\cdot\pa_t\Phi+F_k(U)\cdot \pa_{k}\Phi\right) =  \int\limits_{\Dom} A(U)(\cdot,\tau)\cdot\Phi(\cdot,\tau).
\end{align*}
Thus, noting that the space $C^1(Q\times[0,T])$ is separable, we may choose a null set of times outside which the above inequality, that coincides with \eqref{eqn:weak_formulation}, holds. Similarly, we may infer the assumed dissipation inequality. In particular, for $0< s < \tau < T$, let $(\theta_j)\subset C^1_c([0,T))$ be a bounded sequence, approximating the function
\[
\theta(t) = \left\{\begin{array}{cc}
0,&t\in[0,s)\\
(t-s)/\delta, & t\in [s,s + \delta)\\
1, & t\in[s+\delta, \tau)\\
(\tau - t)/\delta + 1,&t\in[\tau,\tau+\delta)\\
0,&t\in [\tau+\delta,T)
\end{array}\right.
\]
such that $\dot\theta_j(t)\rightarrow\dot\theta(t)$ for all $t\neq s, s+\delta, \tau,\tau+\delta$. For simplicity, we also assume that $\theta_j(0)=0$ for all $j$. Testing the dissipation inequality \eqref{ineq:entropy_alt} with $\theta_j$ and passing to the limit in $j$ via dominated convergence, we find that
\[
\frac{1}{\delta}\int\limits_{s}^{s+\delta}\int\limits_Q \eta(U) -  \frac{1}{\delta}\int\limits_{\tau}^{\tau+\delta}\int\limits_Q \eta(U) \geq 0
\]
or equivalently \eqref{ineq:entropy} being understood that the argument also extends to $s=0$. 

Note also that dissipative solutions are less restrictive than entropic solutions, i.e. weak solutions satisfying \eqref{eq:entropy_ineq gen} when tested against functions in $C^1_c([0,T);C^1(Q))$. In particular, uniqueness within the class of dissipative solutions implies uniqueness within the class of entropic solutions.
\end{remark}
}

Our uniqueness result utilises the relative entropy method and below we provide some necessary terminology. For two vectors $\xi,\bar\xi\in\mathcal{O}$, we denote by $H(\xi|\bar\xi)$ the relative entropy defined by
\begin{equation}
H(\xi|\bar\xi):=H(\xi)-H(\bar\xi)-G(\bar\xi)\cdot(A(\xi) - A(\bar\xi)).
\end{equation}
Note that for $f_k = F_k \circ A^{-1}$, $\eta = H\circ A^{-1}$ and $q_k = Q_k\circ A^{-1}$, we find that
\[
\eta(A(\xi)|A(\bar\xi))=\eta(A(\xi))-\eta(A(\bar\xi))-D\eta(A(\bar\xi))\cdot(A(\xi) - A(\bar\xi)),
\] 
reducing to the standard relative entropy for $A(\xi) = \xi$. To present the relative entropy method, let us assume for simplicity that both weak and strong solutions lie within a compact of $\mathcal{O}$. Note that, for $z \in \mathcal{O}$,
\[
\left(D^2H(z) - G(z) D^2A(z) \right)_{ij} = \left(\pa_i A(z)\right)^T D^2\eta(A(z)) \left(\pa_j A(z)\right).  
\]
Since, $DA(z)$ is nonsingular in $\mathcal{O}$, its columns $\pa_i A(z)$ form a basis and given a vector $\zeta \in \re^m$ we find $\xi = (\xi_1,\cdots,\xi_m)^T$ such that
$\zeta = \xi_i \pa_i A(z)$,
where we have employed the Einstein summation convention. Hence, whenever $D^2H - G D^2A > 0$, we also find that
\begin{align*}
\zeta^T D^2\eta(A(z))\zeta 
& = \xi^T \left(D^2H(z) - G(z) D^2A(z) \right)\xi > 0.
\end{align*}
In particular, at least for $\xi$, $\bar \xi$ within a compact subset of $\mathcal{O}$, we infer that
\begin{equation}\label{eq:convex_etarel}
H(\xi|\bar\xi) \gtrsim \abs{A(\xi) - A(\bar\xi)}^2.
\end{equation}
The reader is referred to Lemma \ref{lemma:etarel_lower_bound} for a precise statement under weaker assumptions that are required for our purposes. 
Another quantity which plays a crucial role is the relative flux, defined for each $k=1,\cdots, d$ by
\begin{equation}\label{eqn:defn_Z_k}	
F_k(\xi|\bar\xi):=F_k(\xi)-F_k(\bar\xi)-D F_k(\bar\xi)DA(\bar\xi)^{-1}(A(\xi) - A(\bar\xi)),
\end{equation}
which can also be written as
\[
F_k(\xi|\bar\xi) = f_k(A(\xi)|A(\bar\xi)):=f_k(A(\xi))-f_k(A(\bar\xi))-D f_k(A(\bar\xi))(A(\xi) - A(\bar\xi)).
\]
Note that
\[
f_k(z|\bar z) = \int_0^1(1-\sigma) D^2f_k(\bar z + \sigma (z - \bar z))\,d\sigma (z - \bar z) \cdot (z - \bar z)
\]
and by \eqref{eq:convex_etarel} we find that at least for $\xi$, $\bar \xi$ within a compact subset of $\mathcal{O}$, 
\begin{equation}\label{eq:FkH}
|F_k(\xi|\bar\xi)| = |f_k(A(\xi)|A(\bar\xi)) | \lesssim |A(\xi) - A(\bar\xi)|^2 \lesssim H(\xi|\bar{\xi}).
\end{equation}
This estimate plays a crucial role in the application of the relative entropy method, see Lemma \ref{lemma:growthfk} for a proof of \eqref{eq:FkH} under the weaker assumptions employed here. Indeed, as it will become apparent from the proof of Theorem \ref{theorem1}, the relative entropy method leads to the following \emph{relative entropy inequality}:
\begin{equation*}
\int\limits_{\Dom}H(U|\bar{U})(x,\tau)\,dx\leq\int\limits_{\Dom}H(U|\bar{U})(x,0)\,dx-\int\limits_{0}^{\tau}\int\limits_{Q}\left[\pa_{k} G(\bar{U})\right]\cdot F_k(U|\bar{U})(x,t)\,dx dt,
\end{equation*}
where $U$ is an assumed dissipative solution and $\bar{U}$ a strong solution, i.e. $W^{1,\infty}(\overline{Q}\times[0,T])$, which lies in a compact $K\subset\mathcal{O}$. The idea is to use the convexity of $\eta = H\circ A^{-1}$, the quadratic nature of $F_k$ \eqref{eq:FkH}, and the regularity of $\bar U$ to estimate that, at least for $U$ within a compact of $\mathcal{O}$,
\begin{equation}\label{eq:relentineq0}
\int\limits_{\Dom} H(U|\bar{U})(x,\tau) \leq\int\limits_{\Dom}H(U|\bar{U})(x,0)+ C(\|\bar{U}\|_{W^{1,\infty}})\int\limits_{0}^{\tau}\int\limits_{Q} H(U|\bar{U})(x,t).
\end{equation}
The (weak-strong) uniqueness can be concluded, if $U(\cdot,0) = \bar{U}(\cdot,0)$, by Gr\"onwall's inequality. 

However, for general hyperbolic systems, we are unable to control that the weak solution remains within any compact of $\mathcal{O}$. In fact, weak solutions may even blow up in $L^{\f}$ at finite time for bounded initial data, see \cite{Baiti}. Thus, no $L^\f$ bounds can be assumed on dissipative solutions and appropriate growth, and coercivity, conditions need to be involved. Indeed, henceforth, we make the following assumptions which are partly motivated by \cite{tzavaras_cleopatra,GKS} and we refer the reader to Sections \ref{models}, \ref{sec:triangular} for relevant examples.

\begin{assumption} 
We assume the following on $G$, $H$ (resp. $\eta$), $F_k$ (resp. $f_k$) and $A$: 
\begin{description}
\descitem{(H0)}{H0} (regularity) $A,\,F_k,\,H \in C^2(\mathcal{O})$ for $k=1,\cdots, d$ and $A,\,F_k,\,H$ are continuous on $\overline{\mathcal{O}}$, where the sets $\mathcal{O}, A(\mathcal{O})\subset \R^m$ are assumed open and convex. Moreover, we assume that $DA(U)$ is nonsingular for $U\in\mathcal{O}$.
\descitem{(H1)}{H1} (coercivity) We assume that $H$ satisfies the coercivity condition
\begin{equation}\label{eq:coercivity_condition}
H(\xi) = \eta(A(\xi)) \gtrsim -1 + |A(\xi)|^{p}, \,\,p>1.
\end{equation}
\descitem{(H2)}{H2} (growth) For $F_k = f_k\circ A$, $H = \eta\circ A$, and $G = D\eta\circ A$ we assume the following:
\begin{description}
\descitem{(H2a)}{H2a} For some $l > 1$,
\begin{equation}\label{eq:growth_condition_1}
|H(\xi)| + |G(\xi)| \lesssim 1 + |\xi|^l.
\end{equation}
We note that we pose no restriction on the size of the exponent $l$.
\descitem{(H2b)}{H2b} For $F_k$ it holds that
\begin{equation}\label{eq:growth_condition}
 |F_k(\xi)| + |A(\xi)| = |f_k(A(\xi))| + |A(\xi)| \lesssim 1 + \eta(A(\xi)) = 1 + H(\xi) .
 \end{equation}
In particular, by \eqref{eq:growth_condition_1}, $F_k$ and $A$ also have polynomial growth.
\descitem{(H2c)}{H2c} Let $I \subset \{1,\ldots, d\}$ a set of indices such that the component $G_i$ of $G$ is nonlinear for $i\in I$. If $\left(F_k(\xi|\bar{\xi})\right)_i \equiv 0$ for all $i\in I$, we make no further assumptions. If for some $i\in I$, $\left(F_k(\xi|\bar{\xi})\right)_i \not\equiv 0$ then we strengthen \descref{H2b}{(H2b)} by assuming that for all such $i$
\begin{equation}\label{eq:extra_growth}
\abs{\left(F_k(\xi)\right)_i}^L \lesssim 1 + H(\xi),\quad\mbox{for some $L>1$}.
\end{equation}
\end{description}
\end{description}

\end{assumption}

\begin{remark}\label{rem:assumptions}
\quad
\begin{itemize}
\item Note that $\bar U$ is assumed to lie in a compact $K\subset\mathcal{O}$. Then, as in \cite{GKS}, we remark that for any $\bar{\xi}\in K$, the functions $\xi\mapsto H(\xi|\bar{\xi})$, and $\xi\mapsto F_k(\xi|\bar{\xi})$ are continuous on $\overline{\mathcal{O}}$. This follows from the fact that $H$ and $F_k$ are continuous on $\overline{\mathcal{O}}$ and that the maps $DF_k$, $DA^{-1}$, $G$ appear evaluated at $\bar{\xi}$ but not $\xi$.
\item Note that \eqref{eq:growth_condition} was already invoked in Remark \ref{remark1} and leads to the estimate (see Lemma \ref{lemma:growthfk})
\begin{equation}\label{eq:relflux_growth}
|F_k(\xi|\bar \xi)| =  f_k(A(\xi)|A(\bar \xi)) \lesssim \eta(A(\xi)|A(\bar \xi)) = H(\xi|\bar \xi).
\end{equation}
\end{itemize}
\end{remark}

Hence, by \eqref{eq:relflux_growth} we may reach inequality \eqref{eq:relentineq0} without the $L^\f$ assumption
and then conclude uniqueness provided $H(\xi|\bar \xi)$ vanishes only when $\xi = \bar \xi$ which is shown in Lemma \ref{lemma:etarel_lower_bound}. We note that estimate \eqref{eq:relentineq0}  is precisely where the regularity of the strong solution enters and it is this point that needs to be overcome, if the regularity of the strong solution is reduced. In particular, in the present article, we show that if $\bar{U}$ is merely in an appropriate Besov space, the uniqueness proof can be concluded under the condition:
\begin{equation}\label{ineq:general}\tag{OSC1}
\left[\pa_{k}G(\bar{U})\right]\cdot F_k(\xi|\bar{\xi}) + b(t) H(\xi|\bar \xi) \geq 0 \mbox{ in }\mathcal{D}^{\p}(\re^d)\mbox{ for all }(\xi,\bar{\xi})\in \overline{\mathcal{O}}\times\mathcal{O},
\end{equation}
which does not require any differentiability properties for $\bar{U}$. Indeed, \eqref{ineq:general} generalises the condition established in \cite{FGJ} for isentropic Euler, see \S \ref{sec:isnE}.

\begin{remark}
We note that \eqref{ineq:general} replaces the Lipschitz condition on $\bar{U}$ and thus also eliminates the need for assumption \descref{H2b}{(H2b)} which was invoked to prove the estimate $|F_k(\xi|\bar\xi)| \lesssim H(\xi|\bar\xi)$ of Lemma \ref{lemma:growthfk} below. Indeed, this estimate is only required in the relative entropy method to write \eqref{eq:relentineq0} for a Lipschitz solutions and, in our case, to also guarantee that any Lipschitz solution satisfies \eqref{ineq:general}. Moreover, assumption \descref{H2b}{(H2b)} is required to justify Definition \ref{def:weak}, see Remark \ref{remark1}. Hence, we prefer to include it in our list of assumptions.
\end{remark}

\begin{lemma}\label{lemma:etarel_lower_bound}
Suppose that \descref{H0}{(H0)}, \descref{H1}{(H1)} are satisfied and that $D^2\eta (z) > 0$ for all $z\in A(\mathcal{O})$. Let $K \subset \mathcal{O}$ compact. Then for all $\bar{\xi}\in K$ and $\xi\in\overline{\mathcal{O}}$ it holds that
\begin{equation}\label{eq:etarel_lower_bound}
 H(\xi|\bar \xi) = \eta(A(\xi)|A(\bar \xi)) \geq 0 \mbox{ and }  H(\xi|\bar \xi) = 0 \,\Leftrightarrow \, \xi = \bar\xi.
\end{equation}
\end{lemma}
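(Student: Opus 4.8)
The plan is to reduce the claim to a Taylor expansion of $\eta$ along line segments in $A(\mathcal{O})$, establish it first on the open set $\mathcal{O}$, and then propagate it to $\overline{\mathcal{O}}$ by approximation. Throughout I would use that, by \descref{H0}{(H0)}, $A$ is a $C^2$-diffeomorphism of $\mathcal{O}$ onto the open convex set $A(\mathcal{O})$, so that $\eta=H\circ A^{-1}\in C^2(A(\mathcal{O}))$ and, as already noted in the text, $H(\xi|\bar\xi)=\eta(A(\xi)|A(\bar\xi))$. Fix $\bar\xi\in\mathcal{O}$ and set $\bar z=A(\bar\xi)$. For $\xi\in\mathcal{O}$, put $z=A(\xi)$; since $A(\mathcal{O})$ is convex the whole segment $[\bar z,z]$ lies in $A(\mathcal{O})$, so the integral form of Taylor's theorem gives $H(\xi|\bar\xi)=\int_0^1(1-s)\,(z-\bar z)^T D^2\eta(\bar z+s(z-\bar z))(z-\bar z)\,ds$. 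As $D^2\eta>0$ on $A(\mathcal{O})$, the integrand is nonnegative and strictly positive on $[0,1)$ unless $z=\bar z$; hence $H(\xi|\bar\xi)\geq0$, with equality exactly when $A(\xi)=A(\bar\xi)$, i.e. (by injectivity of $A$) when $\xi=\bar\xi$. In particular $H(\bar\xi|\bar\xi)=0$, which already gives the "$\Leftarrow$" direction of the equivalence for all $\xi\in\overline{\mathcal{O}}$.

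Next I would pass to the boundary. Given $\xi\in\overline{\mathcal{O}}$, choose $\xi_n\in\mathcal{O}$ with $\xi_n\to\xi$. Continuity of $A$ and $H$ on $\overline{\mathcal{O}}$ (from \descref{H0}{(H0)}) yields $H(\xi_n|\bar\xi)\to H(\xi|\bar\xi)$, and since each term is $\geq0$ so is the limit. For the equivalence, suppose $H(\xi|\bar\xi)=0$: write $H(\xi_n|\bar\xi)$ through the Taylor integral above, observe that for each $s\in[0,1)$ the point $\bar z+s(A(\xi_n)-\bar z)$ converges to $(1-s)\bar z+sA(\xi)$, which lies in $A(\mathcal{O})$ because $\bar z$ is in the open convex set $A(\mathcal{O})$ and $A(\xi)\in\overline{A(\mathcal{O})}$ (image of the closure under the continuous map $A$), and apply Fatou's lemma to the nonnegative integrands to obtain $0=\lim_n H(\xi_n|\bar\xi)\geq\int_0^1(1-s)\,(A(\xi)-\bar z)^T D^2\eta\big((1-s)\bar z+sA(\xi)\big)(A(\xi)-\bar z)\,ds\geq0$. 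Since $D^2\eta>0$ this forces $A(\xi)=A(\bar\xi)$.

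It then remains to upgrade $A(\xi)=A(\bar\xi)$, with $\bar\xi\in\mathcal{O}$ and $\xi\in\overline{\mathcal{O}}$, to $\xi=\bar\xi$. If $\xi\in\mathcal{O}$ this is again injectivity of $A$. If instead $\xi\in\partial\mathcal{O}$, I would pick $\rho>0$ with $\overline{B(\bar z,\rho)}\subset A(\mathcal{O})$; since $A^{-1}$ is continuous on $A(\mathcal{O})$, the set $A^{-1}(\overline{B(\bar z,\rho)})$ is compact and contained in $\mathcal{O}$, while for any $\xi_n\in\mathcal{O}$ with $\xi_n\to\xi$ one has $A(\xi_n)\to A(\xi)=\bar z$, hence $\xi_n\in A^{-1}(B(\bar z,\rho))$ for large $n$, so $\xi\in A^{-1}(\overline{B(\bar z,\rho)})\subset\mathcal{O}$, contradicting $\xi\in\partial\mathcal{O}$. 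Therefore $\xi\in\mathcal{O}$ and $\xi=\bar\xi$, which completes the proof.

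The main obstacle is exactly this passage to $\partial\mathcal{O}$: neither $\eta$ nor $D^2\eta$ is assumed to extend continuously to $\partial A(\mathcal{O})$, and $D^2\eta$ may degenerate there, so one cannot simply evaluate the quadratic form on the closure. Handling it via interior approximants, the nonnegativity of the Taylor integrand together with Fatou's lemma, and the properness of $A^{-1}$ on compact subsets of $A(\mathcal{O})$ is what makes the argument go through. The coercivity assumption \descref{H1}{(H1)} is not strictly needed for this particular statement (it is used elsewhere), but if preferred it can be invoked to discard $\xi$ with $|A(\xi)|$ large at once, via $H(\xi|\bar\xi)\geq -1+|A(\xi)|^p - C(\bar\xi) - |G(\bar\xi)|\,|A(\xi)-A(\bar\xi)|\to\infty$ as $|A(\xi)|\to\infty$, since $p>1$.
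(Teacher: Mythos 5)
Your proof is correct, and it reaches the conclusion by a genuinely different route from the paper's. Both start from the same Taylor-integral representation of $H(\xi|\bar\xi)=\eta(A(\xi)|A(\bar\xi))$ and the strict positivity of $D^2\eta$ on the open convex set $A(\mathcal{O})$, but you then pass to $\overline{\mathcal{O}}$ by a soft analytic device: approximate $\xi\in\partial\mathcal{O}$ from the interior, observe that for every $s\in[0,1)$ the intermediate point $(1-s)A(\bar\xi)+sA(\xi)$ stays in $A(\mathcal{O})$ (a convex combination of an interior and a closure point of an open convex set), and invoke Fatou on the nonnegative integrand, followed by a properness argument for $A^{-1}$ on a small closed ball to exclude $A(\xi)=A(\bar\xi)$ with $\xi$ on the boundary. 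The paper proceeds instead by a quantitative geometric dichotomy: it introduces the fattened compact $K_\delta\subset\mathcal{O}$ and its image under $A$, bounds $H(\xi|\bar\xi)\gtrsim|A(\xi)-A(\bar\xi)|^2$ for $\xi\in K_\delta$ using the minimum of $D^2\eta$ over ${\rm co}(A(K_\delta))$, and for $\xi\in\mathcal{O}\setminus K_\delta$ cuts the segment at the first exit from ${\rm co}(A(K_\delta))$ to produce a $\xi^*$ with $H(\xi^*|\bar\xi)\geq\tilde\delta>0$ \emph{uniformly in $\xi$}, then shows $H(\xi|\bar\xi)\geq H(\xi^*|\bar\xi)$ by rescaling the Taylor integral; continuity of $H(\cdot|\bar\xi)$ on $\overline{\mathcal{O}}$ then closes the boundary case. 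The upshot is that the paper's argument yields the stronger, uniform estimate $H(\xi|\bar\xi)\geq\tilde\delta>0$ off a compact neighbourhood of $K$, which is what the proof of Lemma~\ref{lemma:growthfk} subsequently relies on, whereas your argument is more economical for the statement of Lemma~\ref{lemma:etarel_lower_bound} taken in isolation but would need to be upgraded (essentially by redoing the paper's case~(b)) before being used there. Your concluding observation that \descref{H1}{(H1)} is not strictly needed for this particular lemma also matches the paper: its proof of Lemma~\ref{lemma:etarel_lower_bound} does not actually invoke the coercivity assumption either.
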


\begin{proof}
We present an argument which is a modification of the proof of Lemma A.1 in \cite{GKS}. Let $\xi \in \mathcal{O}$ and let $\delta>0$ small enough such that
\[
K_{\delta} := \left\{z + w : z\in K,\,|w|\leq \delta\right\}\subset \mathcal{O}.
\]
We consider two cases: (a) $\xi \in K_\delta$ and (b) $\xi\in \mathcal{O}\setminus K_\delta$. Assume that $\xi\in K_\delta$. Denoting by ${\rm co}(B)$ the closed convex hull of a compact set $B$, we find that for any $s\in[0,1]$
\[
A(\bar\xi) + s(A(\xi) - A(\bar\xi)) \in {\rm co}\left(A(K_\delta)\right)\subset A(\mathcal{O})
\]
as $A(\mathcal{O})$ is itself convex. Then,
\begin{align}\label{eq:lemma_eta_rel_new1}
H(\xi|\bar\xi) & = \int_0^1(1-s) D^2\eta \left(A(\bar \xi)+s(A(\xi)-A(\bar \xi))\right)\,ds (A(\xi)-A(\bar \xi))\cdot(A(\xi)-A(\bar \xi))\nonumber \\
& \geq \frac12  \min_ {{\rm co}\left(A(K_\delta)\right) }\left\{|D^2\eta|\right\} |A(\xi)-A(\bar \xi)|^2 =: c_0 |A(\xi)-A(\bar \xi)|^2
\end{align}
where $c_0 > 0$ by the uniform convexity of $\eta$ on compact sets. Hence, the lemma follows in the case $\xi \in K_\delta$ as $DA$ is nonsingular on $\mathcal{O}$ and we may compute that
\begin{align*}
|\xi - \bar{\xi}| & \leq \int_0^1 DA^{-1}(A(\bar\xi) + s(A(\xi)-A(\bar \xi)))|\,ds |A(\xi) - A(\bar\xi)| \\
& \leq \max_ {{\rm co}\left(A(K_\delta)\right) }\left\{|(D A)^{-1}|\right\} |A(\xi) - A(\bar\xi) |.
\end{align*}

Next, assume that $\xi\in \mathcal{O}\setminus K_\delta$. Assume in addition that for some $s\in(0,1)$, $A(\bar \xi)+s(A(\xi)-A(\bar \xi))\notin {\rm co}\left(A(K_\delta)\right)$ as otherwise we may proceed as in case (a). Define
\[
s^*:=\inf\left\{s\in(0,1):A(\bar \xi)+s(A(\xi)-A(\bar \xi))\notin {\rm co}\left(A(K_\delta)\right) \right\}.
\]
Note that $s^*>0$ and that by the convexity of $A(\mathcal{O})$ there exists $\xi^*$ such that
\[
A(\xi^*) = A(\bar \xi)+s^*(A(\xi)-A(\bar \xi)).
\]
We also infer that $A(\xi^*)\in \partial \,{\rm co}\left(A(K_\delta)\right)$ and thus $A(\xi^*)$ cannot belong to the interior of $A(K_\delta)$. Then, since $DA$ is nonsingular in $\mathcal{O}$, $\xi^*$ cannot belong to the interior of $K_\delta$ and in particular $|\xi^* - \bar\xi | \geq \delta$. Moreover, the fact that $A(\xi),\,A(\xi^*)\in {\rm co}\left(A(K_\delta)\right)$, case (a), and the invertibility of $A$ imply that
\begin{align}\label{eq:lemma_eta_rel_new2}
H(\xi^*|\bar\xi) & \geq  c_0 |A(\xi^*)-A(\bar \xi))|^2 \geq \tilde\delta > 0,
\end{align} 
where $\tilde\delta$ does not depend on $\xi$. We now claim that $H(\xi|\bar\xi) \geq H(\xi^*|\bar\xi)$. Indeed, note that
\begin{align*}
H(\xi^*|\bar\xi) & = \int_0^1 \left(s^*\right)^2(1-s) D^2\eta(A(\bar\xi) + s s^* (A(\xi) - A(\bar\xi)))\,ds(A(\xi) - A(\bar\xi))\cdot (A(\xi) - A(\bar\xi)).
\end{align*}
Setting $\sigma = s s^*$ and bearing in mind that $D^2\eta$ is positive-definite on $A(\mathcal{O})$, we find that
\begin{align*}
H(\xi^*|\bar\xi) & = \int_0^{s^*} (s^*-\sigma) D^2\eta(A(\bar\xi) + \sigma (A(\xi) - A(\bar\xi)))\,d\sigma(A(\xi) - A(\bar\xi))\cdot (A(\xi) - A(\bar\xi))\\
& \leq \int_0^{s^*} (1-\sigma) D^2\eta(A(\bar\xi) + \sigma (A(\xi) - A(\bar\xi)))\,d\sigma(A(\xi) - A(\bar\xi))\cdot (A(\xi) - A(\bar\xi))\\
& \leq H(\xi|\bar\xi).
\end{align*}
Inequality \eqref{eq:lemma_eta_rel_new2} then says that for any $\xi\in\mathcal{O}\setminus K_\delta$,
\begin{equation}\label{eq:lemma_eta_rel_new3}
H(\xi|\bar\xi) \geq  \tilde\delta > 0.
\end{equation}
The continuity of $H(\cdot|\bar\xi)$ on $\overline{\mathcal{O}}$ now completes the proof.
\end{proof}

We end this section with a Lemma establishing \eqref{eq:relflux_growth} under \descref{H0}{(H0)}, \descref{H1}{(H1)}, and \descref{H2b}{(H2b)}.

\begin{lemma}\label{lemma:growthfk}
Suppose that \descref{H0}{(H0)}, \descref{H1}{(H1)}, and \descref{H2b}{(H2b)} are satisfied and let $K\subset\mathcal{O}$ compact. Then, for all $\bar{\xi}\in K$ and $\xi\in\overline{\mathcal{O}}$ it holds that
\[
|F_k(\xi|\bar \xi)| =  f_k(A(\xi)|A(\bar \xi)) \lesssim \eta(A(\xi)|A(\bar \xi)) = H(\xi|\bar \xi).
\]
\end{lemma}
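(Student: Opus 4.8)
\emph{Proof sketch.} The plan is to split $\overline{\mathcal{O}}$ into a fixed neighbourhood of $K$, where the estimate comes from the quadratic structure of $F_k(\cdot\,|\bar\xi)$ and $H(\cdot\,|\bar\xi)$, and its complement, where it follows from the growth condition \descref{H2b}{(H2b)} together with the coercivity \descref{H1}{(H1)} and the strict positivity of $H(\cdot\,|\bar\xi)$ away from $K$ supplied by Lemma \ref{lemma:etarel_lower_bound}. Throughout, the strict convexity $D^2\eta>0$ on $A(\mathcal{O})$ is in force as a standing assumption, so Lemma \ref{lemma:etarel_lower_bound} is available.

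First I would fix $\delta>0$ small enough that $K_\delta:=\{z+w : z\in K,\ |w|\le\delta\}$ is a compact subset of $\mathcal{O}$, exactly as in the proof of Lemma \ref{lemma:etarel_lower_bound}, and consider $\xi\in K_\delta$. For such $\xi$ the segment joining $A(\bar\xi)$ and $A(\xi)$ lies in the compact convex set ${\rm co}(A(K_\delta))\subset A(\mathcal{O})$, so the second-order Taylor (integral remainder) representation of $f_k(A(\xi)|A(\bar\xi))$ recorded just before \eqref{eq:FkH}, together with the $C^2$ regularity in \descref{H0}{(H0)}, yields $|F_k(\xi|\bar\xi)|=|f_k(A(\xi)|A(\bar\xi))|\le C_1|A(\xi)-A(\bar\xi)|^2$ with $C_1=\tfrac12\max_{{\rm co}(A(K_\delta))}|D^2f_k|$, while the analogous representation of $\eta(A(\xi)|A(\bar\xi))$ combined with the uniform convexity of $\eta$ on ${\rm co}(A(K_\delta))$ gives $H(\xi|\bar\xi)=\eta(A(\xi)|A(\bar\xi))\ge c_0|A(\xi)-A(\bar\xi)|^2$ for some $c_0>0$, exactly as in \eqref{eq:lemma_eta_rel_new1}. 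Dividing, $|F_k(\xi|\bar\xi)|\le(C_1/c_0)H(\xi|\bar\xi)$ on $K_\delta$, with constants independent of $\bar\xi\in K$.

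Next I would take $\xi\in\mathcal{O}\setminus K_\delta$. Estimating the three terms of \eqref{eqn:defn_Z_k} separately, using that $F_k$, $DF_k$, $DA^{-1}$ and $A$ are bounded on the compact $K$ (by \descref{H0}{(H0)}) and the growth bound $|F_k(\xi)|+|A(\xi)|\lesssim 1+H(\xi)$ of \descref{H2b}{(H2b)}, I would obtain $|F_k(\xi|\bar\xi)|\le C_K(1+H(\xi))$. On the other hand $H(\xi|\bar\xi)=H(\xi)-H(\bar\xi)-G(\bar\xi)\cdot(A(\xi)-A(\bar\xi))\ge H(\xi)-C_K-C_K|A(\xi)|$, and Young's inequality followed by the coercivity \descref{H1}{(H1)} (which gives $|A(\xi)|^{p}\lesssim 1+H(\xi)$ for some $p>1$) absorbs $C_K|A(\xi)|$ into $\tfrac12 H(\xi)$ up to an additive constant, so $H(\xi)\le 2H(\xi|\bar\xi)+C_K'$ and hence $|F_k(\xi|\bar\xi)|\le C_K''(1+H(\xi|\bar\xi))$. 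Finally, Lemma \ref{lemma:etarel_lower_bound} provides $H(\xi|\bar\xi)\ge\tilde\delta>0$ on $\mathcal{O}\setminus K_\delta$ with $\tilde\delta$ uniform in $\bar\xi\in K$, which absorbs the remaining constant and gives $|F_k(\xi|\bar\xi)|\le C_K'''H(\xi|\bar\xi)$ there. Since $\xi\mapsto F_k(\xi|\bar\xi)$ and $\xi\mapsto H(\xi|\bar\xi)$ are continuous on $\overline{\mathcal{O}}$ (Remark \ref{rem:assumptions}), the bound passes to $\xi\in\overline{\mathcal{O}}\setminus K_\delta$, and together with the first case the estimate holds on all of $\overline{\mathcal{O}}$ with a constant depending only on $K$.

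The routine part is the first case: a Taylor expansion plus uniform convexity on a compact set. The main obstacle will be the bookkeeping in the second case — keeping all constants uniform over $\bar\xi\in K$, checking that the Young/coercivity absorption is legitimate (this is exactly where the exponent $p>1$ in \descref{H1}{(H1)} is used), and verifying that the constant $\tilde\delta$ coming from Lemma \ref{lemma:etarel_lower_bound} may be taken uniform in $\bar\xi\in K$; the latter one reads off from that lemma's proof, where $\tilde\delta$ depends only on $\delta$, on $c_0$, and on the modulus of invertibility of $A$ on ${\rm co}(A(K_\delta))$.
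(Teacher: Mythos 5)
Your proposal is correct and follows essentially the same two-case decomposition as the paper's proof: the quadratic Taylor/uniform-convexity bound on $K_\delta$, then \descref{H2b}{(H2b)} plus the Young/coercivity absorption and the uniform lower bound $\tilde\delta$ from Lemma~\ref{lemma:etarel_lower_bound} on $\mathcal{O}\setminus K_\delta$, finished off by continuity on $\overline{\mathcal{O}}$. The bookkeeping points you flag (uniformity in $\bar\xi\in K$, where $p>1$ enters) are exactly where the paper uses them and pose no difficulty.
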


\begin{proof}
Following the proof of Lemma \ref{lemma:etarel_lower_bound}, we consider two cases: (a) $\xi\in K_\delta$ and (b) $\xi\in \mathcal{O}\setminus K_\delta$ where we recall that 
\[
K_{\delta} = \left\{z + w : z\in K,\,|w|\leq \delta\right\}\subset \mathcal{O}.
\]
Suppose that $\xi\in K_\delta$. Then by \eqref{eq:lemma_eta_rel_new1} we may estimate that
\begin{align*}
|f_k(A(\xi)|A(\bar\xi)) | & = \left|\int_0^1(1-s) D^2f_k \left(A(\bar \xi)+s(A(\xi)-A(\bar \xi))\right)\,ds (A(\xi)-A(\bar \xi))\cdot(A(\xi)-A(\bar \xi))\right| \\
& \leq \sup_{{\rm co}(A(K_\delta))}|D^2f_k| \,|A(\xi) - A(\bar\xi)|^2 \lesssim \eta(A(\xi)|A(\bar\xi)).
\end{align*}
This completes the proof of case (a). Next, let $\xi\in \mathcal{O}\setminus K_\delta$ to find that
\begin{equation}\label{eq:lemma_flux_1}
F_k(\xi|\bar\xi) = f_k(A(\xi)|A(\bar\xi)) \lesssim |f_k(A(\xi))| + 1 + |A(\xi)| \lesssim 1 + \eta(A(\xi))
\end{equation}
where the suppressed constants in the first inequality only depend on the range of continuous functions on the compact set $K$, and the second inequality follows from \descref{H2b}{(H2b)}. We now utilise the coercivity condition \descref{H1}{(H1)}. In particular, Young's inequality says that
\begin{align*}
\eta(A(\xi)|A(\bar{\xi})) & \geq \eta(A(\xi)) - C - C(\delta) |D\eta(A(\bar \xi))|^{\frac{p}{p-1}} - \delta C |A(\xi)|^{p}\\
&  \geq \eta(A(\xi)) - C(\delta) - \delta C |A(\xi)|^{p}.
\end{align*}
However, the assumed coercivity condition states that
\[
|A(\xi)|^{p} \lesssim 1 + \eta(A(\xi)) = 1 + H(\xi),
\]
i.e. for $\delta>0$ small enough we find that
\begin{equation}\label{eq:lemma_flux_2}
H(\xi|\bar\xi) \gtrsim H(\xi) - 1.
\end{equation}
Hence, combining with \eqref{eq:lemma_flux_1}, we infer that
\begin{equation}\label{eq:lemma_flux_3}
F_k(\xi|\bar\xi) \lesssim 1 + H(\xi|\bar\xi).
\end{equation}
We remark that the coercivity condition and the resulting inequality \eqref{eq:lemma_flux_2}, are the ingredients replacing the condition $|A(\xi)|/\eta(\xi) \to 0$, as $|\xi| \to \infty$, found in \cite{GKS} and \cite[Lemma A.1]{tzavaras_cleopatra}. We are thus left to show that $H(\xi|\bar\xi) \gtrsim 1$ for $\xi \in \mathcal{O}\setminus K_\delta$. Indeed, as in the proof of Lemma \ref{lemma:etarel_lower_bound}, we may deduce \eqref{eq:lemma_eta_rel_new3}, i.e. that for some $\tilde\delta>0$,
\begin{equation*}
H(\xi|\bar\xi) \geq  \tilde\delta ,\mbox{ for any }\xi\in\mathcal{O}\setminus K_\delta.
\end{equation*}
The above inequality, \eqref{eq:lemma_flux_3} and the continuity of $H(\cdot|\bar\xi)$ and $F_k(\cdot|\bar\xi)$ on $\overline{\mathcal{O}}$ complete the proof.
\end{proof}


\section{Main result}\label{sec:main}

Our main result follows: 

    \begin{theorem}\label{theorem1}
    	Suppose that the system of conservation laws \eqref{eqn:conlawgen} is endowed with an entropy-entropy flux pair $(H,Q_k)$ satisfying \descref{H0}{(H0)}--\descref{H2}{(H2)} and \eqref{eq:convexity gen}. Let $U:\Dom\times [0,T]\to\overline{\mathcal{O}}$, $\bar{U}:\Dom\times [0,T]\to K\subset\mathcal{O}$, for $K$ compact, be dissipative solutions to \eqref{eqn:conlawgen} emanating from the initial data $U_0$ in the sense of Definition \ref{def:weak}, and suppose in addition the following:
   	\begin{enumerate}
   		\item $\bar{U}\in L^{\f}(\Dom\times(0,T))$ and for some $\alpha>\frac{1}{2}$
   		\begin{equation}\label{regularity_assumption}
   		\bar{U}\in B^{\alpha,\f}_q(\Dom\times(\de,T))\cap C(0,T;L^1(\Dom))\mbox{ for all }\de>0
   		\end{equation}
		where, as in \descref{H2c}{(H2c)}, for the indices $i\in I$ for which $(G)_i$ is nonlinear,
		\[
		q \geq \left\{\begin{array}{cl}
		 \max\left\{2p/(p - 1), 2{L}/(L-1)\right\}, & \mbox{ if } F_k(\xi|\bar{\xi})_i \not\equiv 0\mbox{ for some }i\in I\\
		 2p/(p - 1), &  \mbox{ if } F_k(\xi|\bar{\xi})_i \equiv 0\mbox{ for all }i\in I
		\end{array}\right.
		\]
		and $p>1$ as in \descref{H1}{(H1)}.
   		\item There exists $ b\in L^1((0,T))$ such that for $t\in(0,T)$, 
   		\begin{equation}\label{one_sided_bound_condition}\tag{OSC2}
   		\int\limits_{\Dom}\left(-\left[\pa_{k} \varphi(x)\right] G(\bar{U}(x,t))\cdot F_k(\xi|\bar{\xi})+b(t)\varphi(x) H(\xi|\bar\xi) \right)\,dx\geq0,
   		\end{equation}
   		for all $0\leq \varphi\in C^\f_c(\Dom)$ and $(\xi,\bar{\xi})\in\overline{\mathcal{O}}\times\mathcal{O}$ where $F_k$ is given by \eqref{eqn:defn_Z_k}.
   	\end{enumerate}
   	Then
   	\begin{equation*}
   	U(x,t)= \bar{U}(x,t)\mbox{ for a.e. }(x,t)\in\Dom\times(0,T).
   	\end{equation*}
   \end{theorem}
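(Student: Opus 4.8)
The plan is to run the relative entropy method with $U$ a dissipative solution and $\bar U$ the Besov solution, but — since $\bar U$ is not Lipschitz — to regularise $\bar U$ in space and time, derive an approximate relative entropy inequality for the mollified function $\bar U_\ve$, and then pass to the limit $\ve\to0$ controlling the new commutator terms by Lemma~\ref{lemma_commutator} and the one-sided condition \eqref{one_sided_bound_condition}. First, I would set $\bar U_\ve = \bar U * \zeta_\ve$ and write the weak formulation \eqref{eqn:weak_formulation} for $U$ tested against gradients of functions built from $G(\bar U_\ve)$ and $A(\bar U_\ve)$, together with the dissipation inequality \eqref{ineq:entropy} for $U$ and the (equality form of the) balance law for the smooth $\bar U_\ve$; combining these in the standard Dafermos--DiPerna bookkeeping produces, for a.e.\ $\tau\in(\de,T)$,
\begin{align*}
\int_{\Dom} H(U|\bar U_\ve)(x,\tau)\,dx
&\leq \int_{\Dom} H(U|\bar U_\ve)(x,\de)\,dx
- \int_{\de}^{\tau}\int_{\Dom} \bigl[\pa_k G(\bar U_\ve)\bigr]\cdot F_k(U|\bar U_\ve)\,dx\,dt\\
&\quad + \mathcal{E}_\ve(\tau),
\end{align*}
where $\mathcal{E}_\ve(\tau)$ collects all the error terms arising because $\bar U_\ve$ only approximately solves \eqref{eqn:conlawgen} and because $G(\bar U_\ve)\neq G(\bar U)_\ve$, $A(\bar U_\ve)\neq A(\bar U)_\ve$, etc.

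The core of the argument is showing $\mathcal{E}_\ve(\tau)\to0$. The terms measuring the failure of $\bar U_\ve$ to solve the PDE have the schematic form $\int (G(\bar U_\ve) - G(\bar U)_\ve)\cdot\pa_t(\cdots)$ and $\int (\pa_k G(\bar U_\ve) - \pa_k(G(\bar U)_\ve))\cdot(\cdots)$, and similarly with $F_k$, $A$ in place of $G$; by Lemma~\ref{lemma_commutator} applied to $\mathbb{B}=G$ (resp.\ $F_k$, $A$) these are bounded by $C\ve^{2\alpha-1}(1+|\bar U|_{B^{\alpha,\f}_q}^2)$ paired against $L^\infty$ or $L^r$ norms of $U$-dependent quantities. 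Since $\alpha>1/2$, the prefactor $\ve^{2\alpha-1}\to0$; the companion factor is finite precisely because of the coercivity \descref{H1}{(H1)} and growth \descref{H2}{(H2)} assumptions — the dissipation inequality gives $A(U)\in L^\infty(0,T;L^p)$, \descref{H2a}{(H2a)} controls $H(U),G(U)$, \descref{H2b}{(H2b)} controls $F_k(U),A(U)$ by $H(U)\in L^\infty(0,T;L^1)$, and \descref{H2c}{(H2c)} with the exponent condition on $q$ ensures the Hölder pairing of the $L^{q/2}$ commutator bound with the relevant power of $F_k(U)$ (or $A(U)$) for the nonlinear components $G_i$ closes. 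The choice $q\geq 2p/(p-1)$ (resp.\ $q\geq 2L/(L-1)$) is exactly what makes $q/2$ conjugate to the integrability available for those components. One also uses $\bar U_\ve\to\bar U$ in $C(0,T;L^1)$ and pointwise a.e.\ (after a subsequence, using $\bar U\in L^\infty$) plus dominated convergence to handle the terms $\int H(U|\bar U_\ve)$ themselves, converging to $\int H(U|\bar U)$, and to handle the initial-slice term at $t=\de$.

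Next I would insert the one-sided condition. After passing $\ve\to0$ the surviving principal term is $-\int_\de^\tau\int_\Dom [\pa_k G(\bar U)]\cdot F_k(U|\bar U)\,dx\,dt$, which is a priori only a distributional object; applying \eqref{one_sided_bound_condition} pointwise in $\xi=U(x,t)$, $\bar\xi=\bar U(x,t)$ (after mollifying $\varphi\equiv1$ on the torus, or directly since $\Dom$ is the torus) bounds it above by $\int_\de^\tau b(t)\int_\Dom H(U|\bar U)\,dx\,dt$. This yields, for a.e.\ $\tau\in(\de,T)$,
\[
\int_\Dom H(U|\bar U)(x,\tau)\,dx \leq \int_\Dom H(U|\bar U)(x,\de)\,dx + \int_\de^\tau b(t)\int_\Dom H(U|\bar U)(x,t)\,dx\,dt,
\]
and Grönwall (with $b\in L^1$) gives $\int_\Dom H(U|\bar U)(\cdot,\tau) \leq e^{\int_\de^\tau b}\int_\Dom H(U|\bar U)(\cdot,\de)$. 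Since both $U$ and $\bar U$ emanate from the same data $U_0$ and $\bar U\in C(0,T;L^1)$ with $A(U)\in C_{weak}(0,T;L^p)$ (Remark~\ref{remark0}), letting $\de\to0$ forces $\int_\Dom H(U|\bar U)(\cdot,0)=0$; hence $\int_\Dom H(U|\bar U)(\cdot,\tau)=0$ for a.e.\ $\tau$, and Lemma~\ref{lemma:etarel_lower_bound} (applicable as $\bar U$ takes values in the compact $K\subset\mathcal{O}$ and, by \eqref{eq:convexity gen}, $D^2\eta>0$ on $A(\mathcal{O})$) yields $U=\bar U$ a.e.

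\textbf{Main obstacle.} The delicate point is the uniform-in-$\ve$ control of $\mathcal{E}_\ve$: one must verify that \emph{every} error term pairs a quantity bounded by Lemma~\ref{lemma_commutator} (living in $L^{q/2}$ in space, with an $\ve^{2\alpha-1}$ prefactor) against a $U$-dependent factor whose integrability is guaranteed only by the coercivity/growth hypotheses and the dissipation inequality — in particular the terms involving the nonlinear components of $G$ paired with $F_k(U|\bar U)$, which is where \descref{H2c}{(H2c)} and the precise lower bound on $q$ are indispensable. A secondary subtlety is the handling of time regularity: since Besov regularity is only assumed on $\Dom\times(\de,T)$, the argument must be run on $(\de,\tau)$ and the limit $\de\to0$ taken at the end using weak continuity of $A(U)$ and strong $C(0,T;L^1)$ continuity of $\bar U$; when the system has the special form \eqref{eqn:conlaw} (i.e.\ $A=\mathrm{id}$), the time-mollification is unnecessary and one needs only spatial Besov regularity, which should be remarked separately.
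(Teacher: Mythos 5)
Your plan largely tracks the paper's argument: mollify $\bar U$ in space and time, derive the relative-entropy inequality for $(U,\bar U_\ve)$ from the weak formulation of $U$ and the (approximate) balance law for $\bar U_\ve$, control the resulting commutator errors through Lemma~\ref{lemma_commutator} together with the growth/coercivity hypotheses, insert the one-sided condition, invoke Gr\"onwall, and send $\de\to0$ at the end using $\bar U\in C(0,T;L^1)$ and the weak-$L^p$ continuity of $A(U)$ from Remark~\ref{remark0}. The role you assign to each hypothesis (\descref{H1}{(H1)} gives $A(U)\in L^\infty(0,T;L^p)$; \descref{H2c}{(H2c)} and the lower bound on $q$ close the $L^{q/2}$--$L^{q/(q-2)}$ H\"older pairing of the commutator bound against $F_k(U|\bar U_\ve)$ on the nonlinear components of $G$; $\alpha>1/2$ makes the $\ve^{2\alpha-1}$ prefactor vanish) matches the paper, as does your remark on the $A=\mathrm{id}$ case.

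The one place your plan is backwards is the order in which the one-sided condition is inserted. You propose to pass $\ve\to0$ first and then apply \eqref{one_sided_bound_condition} to the term $-\int_\de^\tau\int_\Dom\bigl[\pa_k G(\bar U)\bigr]\cdot F_k(U|\bar U)$. But in the stated regularity class this is not a Lebesgue integral: $\pa_k G(\bar U)$ exists only as a distribution (no BV bound is assumed), while $F_k(U|\bar U)$ is merely $L^{q/(q-2)}$ in space, so the product has no a priori meaning and $\int\pa_k G(\bar U_\ve)\cdot F_k(U|\bar U_\ve)$ need not converge to it. The paper instead applies the condition \emph{before} the limit: test \eqref{one_sided_bound_condition} with $\varphi(y,s)=\zeta_\ve(x-y,t-s)$ and integrate to obtain $\pa_k G(\bar U)_\ve\cdot F_k(\xi|\bar\xi)+b_\ve(t)H(\xi|\bar\xi)\geq0$ pointwise, then replace $\pa_k G(\bar U)_\ve$ by $\pa_k G(\bar U_\ve)$, absorbing the discrepancy into a further commutator error $\mathcal{T}^\ve=(\pa_k G(\bar U_\ve)-\pa_k G(\bar U)_\ve)\cdot F_k(U|\bar U_\ve)$ controlled by Lemma~\ref{lemma_commutator} together with \descref{H2c}{(H2c)}, exactly along the lines your second paragraph sketches. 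Only after this substitution does the principal term become $\int b_\ve(t)H(U|\bar U_\ve)$, which does survive the limit. Your parenthetical ``mollifying $\varphi\equiv1$, or directly'' does not repair this: the mollifier must act at the level of the OSC inequality itself so that the distributional derivative of $G(\bar U)$ is paired with a smooth kernel, not with $F_k(U|\bar U)$. With the insertion reordered in this way, the rest of your outline goes through as written.
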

{\color{black} Note that \eqref{one_sided_bound_condition} and \eqref{ineq:general} are the same. We prefer to write \eqref{one_sided_bound_condition} to get an integral form and the negative sign in the first term comes from the definition of the distributional derivative.}

Next, we present a corollary to Theorem \ref{theorem1} when $A$ is linear and \eqref{eqn:conlawgen} reduces to \eqref{eqn:conlaw}. In this case, we show that the Besov regularity need only be assumed in the space variables. Note that we may now replace $H = \eta$, $F_k = f_k$, and $G = D\eta$.

 \begin{corollary}\label{theorem2}
Suppose that the system of conservation laws \eqref{eqn:conlaw} is endowed with an entropy-entropy flux pair $(\eta,q_k)$ satisfying \descref{H0}{(H0)}--\descref{H2}{(H2)} where $\eta$ is strictly convex on $\mathcal{O}$. Let $U:\Dom\times [0,T]\to\overline{\mathcal{O}}$, $\bar{U}:\Dom\times [0,T]\to K\subset\mathcal{O}$, for $K$ compact, be dissipative solutions to \eqref{eqn:conlaw} emanating from the initial data $U_0$ in the sense of Definition \ref{def:weak}, and suppose in addition the following:
   	\begin{enumerate}
   		\item $\bar{U}\in L^{\f}(\Dom\times(0,T))$ and for the same exponents $q$ and $\alpha$ as in Theorem \ref{theorem1}
		\begin{equation}
		\bar{U}\in L^{1}(\de,T;B^{\alpha,\f}_q(Q))\cap C(0,T;L^1(\Dom))\mbox{ for all }\de>0.
		\end{equation}
   		\item There exists $ b\in L^1((0,T))$ such that for $t\in(0,T)$, $\bar{U}$ satisfies \eqref{one_sided_bound_condition} for all $0\leq \varphi\in C^\f_c(\Dom)$ and {$(\xi,\bar{\xi})\in\overline{\mathcal{O}}\times\mathcal{O}$}.
   	\end{enumerate}
	Then
   	\begin{equation*}
   	U(x,t)= \bar{U}(x,t)\mbox{ for a.e. }(x,t)\in\Dom\times(0,T).
   	\end{equation*}
\end{corollary}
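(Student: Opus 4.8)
The plan is to rerun the proof of Theorem~\ref{theorem1} with a single change: the reference solution $\bar U$ is regularised in the space variables only. The reason this suffices is that for \eqref{eqn:conlaw} the map $A$ is the identity, so mollification in $x$ commutes \emph{exactly} with the equation and no genuinely temporal commutator is ever produced; only the spatial Besov seminorm of $\bar U$ then enters the estimates. (If one starts instead from \eqref{eqn:conlawgen} with affine $A$, the linear change of variables $V=A(U)$, $f_k=F_k\circ A^{-1}$, $\eta=H\circ A^{-1}$ indicated after the statement reduces matters to \eqref{eqn:conlaw}, turns \eqref{eq:convexity gen} into strict convexity of $\eta$, and leaves \eqref{one_sided_bound_condition} unchanged.) So I fix a mollifier $\zeta_\varepsilon$ supported in $\{|x|<\varepsilon\}\subset\re^d$ and set $\bar U_\varepsilon(x,t):=\big(\bar U(\cdot,t)*\zeta_\varepsilon\big)(x)$; throughout, $(\cdot)_\varepsilon$ denotes convolution in $x$ alone. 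Since $\bar U$ is valued in the compact $K\subset\mathcal{O}$ and $\mathcal{O}$ is convex, $\bar U_\varepsilon$ is valued in a fixed compact subset of $\mathcal{O}$ for all small $\varepsilon$, so $D\eta(\bar U_\varepsilon)$ is an admissible test field — smooth in $x$ and, as noted below, Lipschitz in $t$; the mismatch with the $C^1$ class of \eqref{eqn:weak_formulation} is removed by a routine extra regularisation in time, using $f_k(U),U\in L^\f(0,T;L^1(\Dom))$ from \descref{H2b}{(H2b)} and the weak continuity of $U$ recorded in Remark~\ref{remark0}.

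The central observation is an exact identity. Mollifying in $x$ the weak form of \eqref{eqn:conlaw} for $\bar U$ gives, with no error, $\partial_t\bar U_\varepsilon=-\partial_k\big(f_k(\bar U)\big)_\varepsilon$ in $\mathcal{D}'(\Dom\times(0,T))$; the right-hand side lies in $L^\f(\Dom\times(0,T))$ for each fixed $\varepsilon$, so $\bar U_\varepsilon$ is Lipschitz in $t$ and
\[
\partial_t\big(D\eta(\bar U_\varepsilon)\big)=-D^2\eta(\bar U_\varepsilon)\,\partial_k\big(f_k(\bar U)\big)_\varepsilon .
\]
For general $A$ one instead has only $\partial_t\big(A(\bar U)_\varepsilon\big)=-\partial_k\big(F_k(\bar U)\big)_\varepsilon$, which differs from $\partial_t\big(A(\bar U_\varepsilon)\big)=DA(\bar U_\varepsilon)\partial_t\bar U_\varepsilon$ by a space--time commutator — precisely the term that forces the space--time Besov regularity in Theorem~\ref{theorem1} — whereas for $A=\mathrm{id}$ this difference is identically zero. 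Testing the weak formulations of $U$ and of $\bar U$ against $\Psi=D\eta(\bar U_\varepsilon)$, subtracting, using the dissipation inequality \eqref{ineq:entropy} for $U$, the entropy balance for $\bar U$ (which, by the commutator estimate of Lemma~\ref{lemma_commutator} and $\alpha>1/2$, is in fact an equality, so $\int_{\Dom}\eta(\bar U)(x,\tau)\,dx=\int_{\Dom}\eta(\bar U_0)\,dx$), the symmetry relation \eqref{eq:symmetric} and the chain rule, one reaches the same relative-entropy identity as in Theorem~\ref{theorem1},
\[
\int_{\Dom}\eta(U|\bar U)(x,\tau)\,dx\le\int_{\Dom}\eta(U|\bar U)(x,0)\,dx-\int_0^\tau\!\!\int_{\Dom}\partial_k\big[D\eta(\bar U)\big]\cdot f_k(U|\bar U)\,dx\,dt+\mathcal E_\varepsilon ,
\]
interpreted distributionally, where $\mathcal E_\varepsilon$ gathers the regularisation errors.

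Next I would check that $\mathcal E_\varepsilon\to0$. Tracking the computation, each term of $\mathcal E_\varepsilon$ is of one of two types: either $\int_0^\tau\!\!\int_{\Dom}(\text{bounded})\,(\bar U-\bar U_\varepsilon)$, handled by \eqref{Est1}, or a \emph{spatial} commutator
\[
\int_0^\tau\!\!\int_{\Dom}D^2\eta(\bar U_\varepsilon)\Big(\partial_k\big(f_k(\bar U)\big)_\varepsilon-\partial_k\big(f_k(\bar U_\varepsilon)\big)\Big)\cdot(U-\bar U)\,dx\,dt
\]
tested against a factor which, by the coercivity \descref{H1}{(H1)} and the choice $q\ge 2p/(p-1)$ (respectively $q\ge 2L/(L-1)$ when \descref{H2c}{(H2c)} is in force), is bounded in $L^\f\big(0,T;L^{(q/2)'}(\Dom)\big)$. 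Applying Lemma~\ref{lemma_commutator} with $M=d$, $\mathbb B=f_k$, $w=\bar U(\cdot,t)$ to the first factor supplies the gain $\varepsilon^{2\alpha-1}$, and integrating in $t$ one obtains $\mathcal E_\varepsilon\to0$ as $\varepsilon\to0$: the condition $\alpha>1/2$ is exactly what renders the commutator contributions negligible, the required time-integrability of the spatial Besov seminorm of $\bar U$ being provided by the hypotheses of the corollary together with the $L^\f$ bound. Letting $\varepsilon\to0$ and using $\bar U\in C(0,T;L^1(\Dom))$ with the weak continuity of $U$ to identify the boundary values at $t=0$ and $t=\tau$ — where $U(\cdot,0)=\bar U(\cdot,0)=U_0$ makes the $t=0$ term vanish — yields
\[
\int_{\Dom}\eta(U|\bar U)(x,\tau)\,dx\le-\int_0^\tau\!\!\int_{\Dom}\partial_k\big[D\eta(\bar U)\big]\cdot f_k(U|\bar U)\,dx\,dt .
\]

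From here everything is as in Theorem~\ref{theorem1}: one localises with $0\le\varphi\in C^\f_c(\Dom)$, invokes \eqref{one_sided_bound_condition} with $(\xi,\bar\xi)$ frozen to the pointwise values $(U(x,t),\bar U(x,t))$ to bound the flux term from above by $\int_0^\tau b(t)\int_{\Dom}\eta(U|\bar U)\,dx\,dt$, lets $\varphi\uparrow1$, and closes the estimate by Gr\"onwall's inequality (using $b\in L^1(0,T)$), obtaining $\int_{\Dom}\eta(U|\bar U)(x,\tau)\,dx\le0$ for a.e.\ $\tau$; strict convexity of $\eta$ and Lemma~\ref{lemma:etarel_lower_bound} then force $U=\bar U$ a.e.\ in $\Dom\times(0,T)$. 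The main obstacle — and the only point requiring genuinely new work relative to Theorem~\ref{theorem1} — is the bookkeeping of the third paragraph: one must verify that the chain-rule manipulations and the use of \eqref{eq:symmetric} leave no residual temporal commutator, so that all regularisation errors reduce to the spatial commutator controlled by Lemma~\ref{lemma_commutator}; once this is confirmed, the quantitative estimates coincide with those in the proof of Theorem~\ref{theorem1}.
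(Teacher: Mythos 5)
Your proposal is correct and takes essentially the same approach as the paper: for $A=\mathrm{id}$ the temporal part of the commutator $\mathcal{R}^\e$ in \eqref{eq:repsilon} vanishes identically, so one mollifies in space only, applies Lemma~\ref{lemma_commutator} slice-by-slice in $t$ with the spatial Besov seminorm, and integrates using $\bar U\in L^1(\delta,T;B^{\alpha,\infty}_q(Q))$; the rest of the argument (Gr\"onwall, Lemma~\ref{lemma:etarel_lower_bound}) is unchanged.

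One small caveat: your parenthetical invoking an Onsager-type entropy \emph{equality} for $\bar U$ is not needed and is not used in the paper. The relative-entropy framework (Proposition~\ref{Prop:relative}) requires only the dissipation inequality \eqref{ineq:entropy} for the \emph{rough} solution $U$; the contribution of $\bar U_\e$ is handled by the fundamental theorem of calculus applied to the smooth (in $x$, Lipschitz in $t$) regularisation, not by any energy balance for $\bar U$ itself. Removing that detour leaves your proof aligned with the paper's.
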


\begin{remark}
\label{rem:bounded}
We will see in Section \ref{models} that the isentropic Euler equations, as well as the system of convex elasticity and shallow water magnetohydrodynamics, indeed satisfy $F_k(\xi|\bar{\xi})_i\equiv 0$ for all $i$ such that $(G)_i$ is nonlinear. This is not true for the system of polyconvex elasticity which nevertheless satisfies \descref{H2c}{(H2c)}. Thus, we impose these assumptions as they appear naturally. However, it will become obvious from the proof that the conditions on $q$ in Theorem \ref{theorem1} and Corollary \ref{theorem2}, as well as condition \descref{H2c}{(H2c)}, are only required to show that the dissipative solution $U$ satisfies 
\[
F_k(U)_i,\,A(U) \in L^\f(0,T;L^{\frac{q}{q-2}}(Q)).
\]
In fact, in the case of Theorem \ref{theorem1}, the weaker condition
\[
F_k(U)_i \in L^{\frac{q}{q-2}}(Q\times (0,T))
\]
suffices, whereas for Corollary \ref{theorem2} one may assume that
\[
F_k(U)_i \in L^{\frac{r}{r-1}}(0,T;L^{\frac{q}{q-2}}(Q)),
\] 
together with $\bar{U}\in L^{r}(\de,T;B^{\alpha,\f}_q(Q))$. In particular, if $U\in L^\f(Q\times (0,T))$ lies within a compact of $\mathcal{O}$, we only need that $q > 2$ and no growth or coercivity conditions are required.  
\end{remark}

To aid the proof of Theorem \ref{theorem1} and in order to clarify the relative entropy method, we present the following Proposition:

\begin{proposition}\label{Prop:relative}
	Let $V\in C^1([0,T];C^1(Q))$ taking values in a compact $K\subset\mathcal{O}$ and $U$ a dissipative solution to \eqref{eqn:conlawgen}. Then, for $0\leq s<\tau \leq T$, the following form of the relative entropy inequality holds:
	\begin{equation}\label{ineq:relative_ent}
	\begin{array}{rl}
	\int\limits_{\Dom}H(U|V)(x,\tau)\,dx&\leq\int\limits_{\Dom}H(U|V)(x,s)\,dx\\
	&-\int\limits_{s}^{\tau}\int\limits_{\Dom}F_k(U)\cdot \pa_{k}G(V) - F_k(V)\cdot \pa_{k} G(V) + (A(U)-A(V))\cdot\pa_t G(V) dxdt.
	\end{array}
	\end{equation}
\end{proposition}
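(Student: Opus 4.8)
\textbf{Proof plan for Proposition \ref{Prop:relative}.}

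The plan is to derive the relative entropy inequality by combining three ingredients: the weak formulation \eqref{eqn:weak_formulation} of the conservation law for the dissipative solution $U$, the dissipation inequality \eqref{ineq:entropy} for $U$, and the fact that the smooth function $V\in C^1([0,T];C^1(Q))$ satisfies \eqref{eqn:conlawgen} in the classical sense (since $V$ is $C^1$, one may differentiate $H(V)$, $G(V)$ etc. in the strong sense and every product below is legitimate). First I would expand
\[
\int_\Dom H(U|V)\,dx = \int_\Dom \big( H(U) - H(V) - G(V)\cdot(A(U) - A(V))\big)\,dx,
\]
and write the right-hand side at time $\tau$ minus the same expression at time $s$ as a sum of four contributions: the $H(U)$ term, the $H(V)$ term, the $-G(V)\cdot A(U)$ term, and the $G(V)\cdot A(V)$ term. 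The $H(U)$ difference is controlled from above by $0$ via the dissipation inequality \eqref{ineq:entropy}. The $H(V)$ difference and the $G(V)\cdot A(V)$ difference are handled by the chain rule and the classical equation for $V$: using \eqref{eq:entropy-entropy_flux gen} one has $\pa_t H(V) = G(V)\cdot \pa_t A(V) = -G(V)\cdot\pa_k F_k(V) = -\pa_k Q_k(V)$, which integrates to zero over the periodic domain $Q$; similarly $\pa_t\big(G(V)\cdot A(V)\big)$ is a perfect spatial divergence up to the terms that will survive.

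The genuinely delicate term is the cross term $\int_\Dom G(V)\cdot A(U)\,dx$. Here I would use the weak formulation \eqref{eqn:weak_formulation} for $U$ with the test function $\Psi = G(V)$, which is admissible since $V\in C^1([0,T];C^1(Q))$ forces $G(V)\in C^1(Q\times[0,T])$. This yields
\[
\int_\Dom A(U(\cdot,\tau))\cdot G(V(\cdot,\tau)) - \int_\Dom A(U(\cdot,s))\cdot G(V(\cdot,s)) = \int_s^\tau\!\!\int_\Dom \big(A(U)\cdot\pa_t G(V) + F_k(U)\cdot\pa_k G(V)\big)\,dxdt,
\]
after noting that \eqref{eqn:weak_formulation} holds for a.a. $\tau$ and can be subtracted at two such times (handling the initial time $s$ via the initial datum term, or simply using that \eqref{eqn:weak_formulation} is an identity between the states at two times). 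Assembling all four contributions, the $\pa_t$-derivatives of $V$ acting on $G(V)$, $H(V)$, $A(V)$ recombine — using the classical chain rule and the symmetry relations \eqref{eq:symmetricgen}, together with $\pa_t A(V) = -\pa_k F_k(V)$ — so that the spatial-divergence pieces integrate to zero over $Q$ and what remains is exactly
\[
-\int_s^\tau\!\!\int_\Dom \big( F_k(U)\cdot\pa_k G(V) - F_k(V)\cdot\pa_k G(V) + (A(U) - A(V))\cdot\pa_t G(V)\big)\,dxdt,
\]
which is the claimed bound.

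The main obstacle I anticipate is purely a matter of regularity bookkeeping at the level of the weak formulation: one must check that all the integrals above are finite, i.e. that $A(U)$, $F_k(U)$ are integrable in space-time so that testing \eqref{eqn:weak_formulation} with $G(V)$ and $\pa_t G(V)$ is legitimate. This is where assumptions \descref{H1}{(H1)} and \descref{H2}{(H2)} enter: the coercivity \eqref{eq:coercivity_condition} together with the dissipation inequality gives $A(U)\in L^\f(0,T;L^p(Q))$ (Remark \ref{remark0}), and the growth condition \eqref{eq:growth_condition} then yields $F_k(U), A(U)\in L^\f(0,T;L^1(Q))$, so that the pairing with the bounded, continuous functions $G(V)$, $\pa_t G(V)$, $\pa_k G(V)$ is well-defined. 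A second, minor subtlety is that \eqref{eqn:weak_formulation} holds only for a.a. $\tau\in[0,T)$; since the left-hand side $\int_\Dom H(U|V)(\cdot,\tau)\,dx$ is, by Remark \ref{remark0}, determined by the weakly-continuous-in-time representative $A(U)$, the inequality \eqref{ineq:relative_ent} is first obtained for a.a. $s<\tau$ and then extended to the stated range by the weak continuity of $A(U)$ in $L^p$ and the lower semicontinuity of the (convex) map $\xi\mapsto\int_\Dom H(U|V)\,dx$ in the appropriate topology.
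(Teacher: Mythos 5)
Your plan uses the same three ingredients as the paper (test \eqref{eqn:weak_formulation} with $\Psi = G(V)$, use the fundamental theorem of calculus on the smooth $V$-terms, invoke the dissipation inequality \eqref{ineq:entropy}), and it arrives at the correct formula. However, there is a genuine conceptual gap: you explicitly assume that ``the smooth function $V$ satisfies \eqref{eqn:conlawgen} in the classical sense'' and repeatedly invoke $\pa_t A(V) = -\pa_k F_k(V)$. This is \emph{not} a hypothesis of Proposition \ref{Prop:relative} --- $V$ is merely an arbitrary $C^1$ function valued in a compact subset of $\mathcal{O}$ --- and it cannot be a hypothesis, because the whole point of the Proposition in the proof of Theorem \ref{theorem1} is to apply it to $V = \bar{U}_\e$, which does \emph{not} solve \eqref{eqn:conlawgen}: it satisfies only the mollified equation \eqref{eqn_mollified} with a nontrivial error term $\mathcal{R}^\e$. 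A proof of the Proposition that relies on $V$ being an exact solution would therefore be unusable where it matters.

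Concretely, your argument claims that the $H(V)$-difference alone vanishes because $\pa_t H(V) = G(V)\cdot\pa_t A(V) = -\pa_k Q_k(V)$, which uses the equation. Without the equation you only have $\pa_t H(V) = G(V)\cdot\pa_t A(V)$, and neither this nor the $G(V)\cdot\pa_t A(V)$ contribution from $\pa_t\bigl(G(V)\cdot A(V)\bigr)$ vanishes separately. What saves the day --- and this is exactly what the paper's proof does --- is that these two contributions \emph{cancel each other}: by the chain rule and \eqref{eq:entropy-entropy_flux gen} one has $\pa_t\bigl[G(V)\cdot A(V) - H(V)\bigr] = A(V)\cdot\pa_t G(V) + G(V)\cdot\pa_t A(V) - G(V)\cdot\pa_t A(V) = A(V)\cdot\pa_t G(V)$, with no appeal to the equation for $V$. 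Similarly, the identity $\int_s^\tau\int_\Dom F_k(V)\cdot\pa_k G(V)\,dx\,dt = 0$ (needed to introduce the $F_k(V)\cdot\pa_k G(V)$ term in the final formula) follows purely from integration by parts, \eqref{eq:entropy-entropy_flux gen} and periodicity, again independent of $V$ solving anything. Your final formula is correct, but the intermediate reasoning should be repaired so that the proof holds under the stated hypotheses; as written, it silently restricts the Proposition to a class of $V$ for which it would be useless.
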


\begin{proof}
We note that using integration by parts, \eqref{eq:entropy-entropy_flux gen} and the $Q$-periodicity of $V$, we infer that
\begin{align}
\int\limits_{s}^{\tau}\int\limits_{\Dom}\pa_{k} G(V)\cdot F_k(V)\,dxdt
&=-\int\limits_{s}^{\tau}\int\limits_{\Dom} DF_k(V)^T G(V)\cdot \pa_{k}V\,dxdt\nonumber\\
&= -\int\limits_{s}^{\tau}\int\limits_{\Dom} D Q_k(V)\cdot\pa_{k}V\,dxdt \nonumber\\
&= -\int\limits_{s}^{\tau}\int\limits_{\Dom} \pa_{k} Q_k(V)\,dxdt=0.\label{eqn:prep2}
\end{align}

Next, test  \eqref{eqn:weak_formulation} with the function $G(V) \in C^1([0,T];C^1(Q))$ to obtain
\begin{equation}\label{Prop:step1}
\int\limits_{\Dom} A(U(x,\tau))\cdot G(V(x,\tau))  = \int\limits_{0}^{\tau}\int\limits_{\Dom} A(U)\cdot\pa_t G(V)+F_k(U)\cdot \pa_{k}G(V)
  + \int\limits_{\Dom}  A(U_0(x))\cdot G(V(x,0)) .
\end{equation}
Similarly, for $\tau=s$, we find that
\begin{equation}\label{Prop:step1.1}
\int\limits_{\Dom} A(U(x,s))\cdot G(V(x,s))  = \int\limits_{0}^{s}\int\limits_{\Dom} A(U)\cdot\pa_t G(V)+F_k(U)\cdot \pa_{k}G(V)
  + \int\limits_{\Dom}  A(U_0(x))\cdot G(V(x,0))
\end{equation}
and thus, subtracting \eqref{Prop:step1.1} from \eqref{Prop:step1} we infer that
\begin{equation}\label{Prop:step1.2}
\int\limits_{\Dom} A(U(x,\tau))\cdot G(V(x,\tau)) -  A(U(x,s))\cdot G(V(x,s)) = \int\limits_{s}^{\tau}\int\limits_{\Dom} A(U)\cdot\pa_t G(V)+F_k(U)\cdot \pa_{k}G(V).
\end{equation}
Moreover, since $V$ is smooth, a simple application of the fundamental theorem of calculus says that
\begin{equation}\label{Prop:step2}
		\begin{array}{rl}
		\int\limits_{\Dom}\left[A(V)\cdot G(V)-H(V)\right](x,\tau)\,dx=&\int\limits_{\Dom}\left[A(V)\cdot G(V)-H(V)\right](x,s)\,dx\\
		+&\int\limits_{s}^{\tau}\int\limits_{\Dom}A(V)\cdot\pa_t G(V)\,dxdt,
		\end{array}
		\end{equation}
where we have used \eqref{eq:entropy-entropy_flux gen} to write $\pa_t (G(V)\cdot A(V)) = A(V)\cdot \pa_t G(V) + \pa_tH(V)$. Lastly, recall the dissipation inequality \eqref{ineq:entropy}
\[
\int\limits_{\Dom} H(U(x,\tau))\,dx \leq \int\limits_{\Dom} H(U(x,s))\,dx{\mbox{ for }s<\tau},
\]
which combined with \eqref{Prop:step1.2} and \eqref{Prop:step2}, results in
		\begin{equation*}
		\begin{array}{rl}
			\int\limits_{\Dom}H(U|V)(x,\tau)\,dx&\leq\int\limits_{\Dom}H(U|V)(x,s)\,dx\\
			&-\int\limits_{s}^{\tau}\int\limits_{\Dom}F_k(U)\cdot \pa_{k}G(V) + (A(U)-A(V))\cdot\pa_t G(V)dxdt.
			\end{array}
		\end{equation*}
Together with \eqref{eqn:prep2}, the above inequality concludes the proof.
\end{proof}

We may now proceed to the proof of our main result.

\begin{proof}[Proof of Theorem \ref{theorem1}:]

		Let $U$, $\bar{U}$ as in the statement. We wish to apply Proposition \ref{Prop:relative} to $\bar{U}$ which, however, lacks regularity. We instead consider a sequence of mollifiers (in time and space) $\zeta_\eps$ to find that
		\begin{equation*}
		\pa_t A(\bar{U})_\e+\pa_{k}F_k(\bar{U})_\e=0
		\end{equation*}
		where $A(\bar{U})_\e=A(\bar{U})*\zeta_\e$ and $F_k(\bar{U})_\e=F_k(\bar{U})*\zeta_\e$. Thus, for $\bar{U}_\e=\bar{U}*\zeta_\e$,
		\begin{equation}\label{eqn_mollified}
		\pa_t A(\bar{U}_\e)+\pa_{k}F_k(\bar{U}_\e)=\mathcal{R}^\e
		\end{equation}
		where 
		\begin{equation}\label{eq:repsilon}
		\mathcal{R}^\e=\pa_t(A(\bar{U}_\e) - A(\bar{U})_\e) + \pa_{k}(F_k(\bar{U}_\e)-F_k(\bar{U})_\e) 
		\end{equation}
		and we may apply Proposition \ref{Prop:relative} with $V=\bar{U}_\e$ to infer that for $0<s<\tau<T$,
		\begin{equation}\label{eqn:intm_step1}
		\begin{array}{rl}
		\int\limits_{\Dom}H(U|\bar{U}_\e)(x,\tau)\,dx&\leq\int\limits_{\Dom}H(U|\bar{U}_\e)(x,s)\,dx\\
		&-\int\limits_{s}^{\tau}\int\limits_{\Dom}F_k(U)\cdot \pa_{k}G(\bar{U}_\e) - F_k(\bar{U}_\e)\cdot \pa_{k}G(\bar{U}_\e) + (A(U) - A(\bar{U}_\e))\cdot\pa_t G(\bar{U}_\e)dxdt.
		\end{array}
		\end{equation}
		Next note that \eqref{eq:symmetricgen} and \eqref{eqn_mollified} dictate that
		\begin{align*}
		(A(U) - A(\bar{U}_\e))\cdot\pa_t G(\bar{U}_\e) & = (A(U) - A(\bar{U}_\e))\cdot  DG(\bar{U}_\e)\pa_t \bar{U}_\e \\
		& = -(A(U) - A(\bar{U}_\e))\cdot DG(\bar{U}_\e) DA(\bar{U}_\e)^{-1}DF_k(\bar{U}_\e)\pa_k \bar{U}_\e \\
		&\quad + (A(U) - A(\bar{U}_\e))\cdot DG(\bar{U}_\e) DA(\bar{U}_\e)^{-1}\mathcal{R}^\e\\
		& = -(A(U) - A(\bar{U}_\e))\cdot DA(\bar{U}_\e)^{-T} DG(\bar{U}_\e)^{T}DF_k(\bar{U}_\e)\pa_k \bar{U}_\e + \mathcal{S}^\e \\
		& = -(A(U) - A(\bar{U}_\e))\cdot DA(\bar{U}_\e)^{-T} DF_k(\bar{U}_\e)^{T}DG(\bar{U}_\e)\pa_k \bar{U}_\e + \mathcal{S}^\e \\
		& = -DF_k(\bar{U}_\e) DA(\bar{U}_\e)^{-1}(A(U) - A(\bar{U}_\e))\cdot \pa_k G(\bar{U}_\e) + \mathcal{S}^\e,
		\end{align*}
		where we set
		\begin{equation}\label{eq:sepsilon}
		\mathcal{S}^\e := (A(U) - A(\bar{U}_\e))\cdot DG(\bar{U}_\e) DA(\bar{U}_\e)^{-1}\mathcal{R}^\e.
		\end{equation}
		Hence, \eqref{eqn:intm_step1} becomes
		\begin{equation}\label{eqn:interm3}
		\begin{array}{rl}
		\int\limits_{\Dom}H(U|\bar{U}_\e)(x,\tau)\,dx&\leq\int\limits_{\Dom}H(U|\bar{U}_\e)(x,s)\,dx - \int\limits_{s}^{\tau}\int\limits_{\Dom}\left[\pa_{k} G(\bar{U}_\e)\right]\cdot F_k(U|\bar{U}_\e)\,dx 
		- \int\limits_{s}^{\tau}\int\limits_{\Dom} \mathcal{S}^\e,
		\end{array}
		\end{equation}	
where we used the definition of $F_k(\cdot|\cdot)$ as
\[
F_k(U|V) = F_k(U) - F_k(V) - DF_k(V) DA(V)^{-1}(A(U) - A(V)).
\]	
Using $\varphi(y,s)=\zeta_\e(x-y, t-s)$ in \eqref{one_sided_bound_condition} and integrating in $s$, we get that
	\begin{equation}\label{mollified_Z}
	\pa_{k}G(\bar{U})_\e \cdot F_k(\xi|\bar{\xi})+b_\e(t) H(\xi|\bar\xi) \geq0\mbox{ for }t>0\mbox{ and }x\in\Dom,
	\end{equation}
	 {\color{black}where the sign reversal in \eqref{mollified_Z} is due to the derivative being considered in the $y$ variable}. Hence letting $\xi=U$ and $\bar{\xi}=\bar{U}_\e$ in \eqref{mollified_Z} we infer that
	\begin{equation}\label{mollified_Z1}
	\pa_{k}G(\bar{U}_\e)\cdot F_k(U|\bar{U}_\e)+b_\e(t) H(U|\bar{U}_\e) \geq \mathcal{T}^\e,
	\end{equation}
	where
	\begin{equation}\label{def:R2}
	\mathcal{T}^\e := \left(\pa_k G(\bar{U}_\e) - \pa_k G(\bar U)_\e\right)\cdot F_k(U|\bar{U}_\e).
	\end{equation}
	Then, through \eqref{mollified_Z1}, \eqref{eqn:interm3} now reads
	\begin{equation}\label{eqn:interm4}
	\begin{array}{rl}
	\int\limits_{\Dom}H(U|\bar{U}_\e)(x,\tau)\,dx&\leq\int\limits_{\Dom}H(U|\bar{U}_\e)(x,s)\,dx + \int\limits_{s}^{\tau}\int\limits_{\Dom}b_\e(t) H(U|\bar{U}_\e) \,dx - \int\limits_{s}^{\tau}\int\limits_{\Dom}\mathcal{S}^\e+ \mathcal{T}^\e\,dxdt.
	\end{array}
	\end{equation}	
	By virtue of Lemma \ref{lemma_commutator}, the assumptions on $\bar{U}$ and $U$ we may pass to the limit $\e\rr0$ to get
	\begin{equation}\label{eqn:interm4.1}
	\int\limits_{\Dom}H(U|\bar{U})(x,\tau)\,dx\leq \int\limits_{\Dom}H(U|\bar{U})(x,s)\,dx + \int\limits_{s}^{\tau}\int\limits_{\Dom}b(t) H(U|\bar{U}) \,dxdt.
	\end{equation}
	Before we proceed, let us justify in detail \eqref{eqn:interm4.1} and point to the use of the assumptions stated in the theorem. First, note that $\bar U_\e \to \bar U$ for a.e. $(x,t)$ where $\bar U_\e$, $\bar U$ take values in a compact subset of $\mathcal{O}$. Moreover, due to the dissipation inequality \eqref{ineq:entropy}
\begin{equation}\label{eq:proof_H}
H(U) \in L^\f(0,T;L^1(\Dom))
\end{equation}
and by the coercivity condition \descref{H1}{(H1)} in \eqref{eq:coercivity_condition}, see also Remark \ref{remark0},
\begin{equation}\label{eq:proof_A}
A(U) \in L^\f(0,T;L^{p}(\Dom)),\mbox{ where }p>1.
\end{equation}
In particular, $H(U|\bar U_\e)(\cdot,t)\in L^\f(0,T;L^1(Q))$ and, by the smoothness of $H$, $G$, $A$, the dominated convergence theorem says that
\[
\int\limits_\Dom H(U|\bar U_\e)(\cdot, t) \to \int\limits_\Dom H(U|\bar U)(\cdot, t)\mbox{ for a.e. }t\in (0,T).
\]
Note that $b_\e\to b$ in $L^1((0,T))$ and invoking Vitali's convergence theorem, we find that
\[
\int\limits_s^\tau\int\limits_\Dom b_\e(t) H(U|\bar{U}_\e) \to \int_s^\tau\int_\Dom b(t) H(U|\bar U).
\]
Next, to establish \eqref{eqn:interm4.1}, we show that $\mathcal{S}^\e$, $\mathcal{T}^\e\to0$. To estimate $\mathcal{S}^\e$, recalling that $\bar{U}_\e$ lies in a compact, the continuity of $DG$, $A$ and $DA^{-1}$ ensure that
\[
\|\mathcal{S}^\e\|_{L^1(Q\times(s,\tau))} \leq C \int\limits_s^\tau\int\limits_\Dom |A(U) - A(\bar U_\e)| |\pa_t(A(\bar{U}_\e) - A(\bar U)_\e) + \partial_k (F_k(\bar U_\e) - F_k(\bar U)_\e)|.
\]
Now Lemma \ref{lemma_commutator} implies that
\begin{equation}\label{limit_pass:R1}
\|\mathcal{S}^\e\|_{L^1(Q\times(s,\tau))} \leq C \|A(U) - A(\bar U_\e)\|_{L^{\frac{q}{q-2}}(\Dom\times (s,\tau))} \e^{2\alpha-1}\left(1 + |\bar{U}|^2_{B^{\alpha,\f}_{q}(\Dom\times (s,\tau))}\right) \to 0,\mbox{ as }\e\to 0
\end{equation}
{\color{black} since, for $p \geq q/(q-2)$, $A(U) \in L^\f(0,T;L^{\frac{q}{q-2}}(\Dom))$}. We are thus left to estimate $\mathcal{T}^\e$. {In particular, by H\"older's inequality and Lemma \ref{lemma_commutator}
\begin{align}
\|\mathcal{T}^\e\|_{L^1(\Dom\times (s,\tau))} & \leq \sum_i \|\pa_k G_i(\bar{U}_\e) - \pa_k G_i(\bar U)_\e\|_{L^{\frac{q}{2}}(\Dom\times (s,\tau))}\|\left(F_k\right)_i(U|\bar{U}_\e)\|_{L^{\frac{q}{q-2}}(\Dom\times (s,\tau))}\nonumber \\
& \leq C \e^{2\alpha-1}\left(1 + |\bar{U}|^2_{B^{\alpha,\f}_{q}(\Dom\times (s,\tau))}\right)\, \sum_i\|\left(F_k\right)_i(U|\bar{U}_\e)\|_{L^{\frac{q}{q-2}}(\Dom\times (s,\tau))}\label{eq:estimate_R2}
\end{align}
where the index $i$ ranges among the components such that $G_i$ is nonlinear. Indeed, note that whenever $G_i$ is linear the commutator vanishes. However, recalling that $\bar{U}_\e$ lies in a compact subset of $\mathcal{O}$, we find that
\[
\left|\left(F_k\right)_i(U|\bar{U}_\e) \right| \lesssim |(F_k)_i(U)| + 1 + |A(U)|
\]
by the continuity of $DF_k$ and $DA^{-1}$. But we have already argued that $A(U) \in L^\f(0,T;L^{\frac{q}{q-2}}(\Dom))$ and from \descref{H2c}{(H2c)} it follows that
\[
(F_k)_i(U) \in L^\f(0,T;L^{L}(\Dom)).
\]
By the assumptions of Theorem \ref{theorem1}, note that $q/(q-2) \leq L$, so that
\[
(F_k)_i(U) \in L^\f(0,T;L^{\frac{q}{q-2}}(\Dom)).
\] 
}
Thus, the right-hand side of \eqref{eq:estimate_R2} converges to $0$ as $\e\to0$ and \eqref{eqn:interm4.1} is proved. Of course, if $(F_k)_i(\xi|\bar\xi) = 0$ the argument is simpler. Next, by the dissipation inequality \eqref{ineq:entropy} we have that
\begin{align}
	\int\limits_{\Dom}H(U|\bar{U})(x,\tau)\,dx & \leq  \int\limits_\Dom H(U_0(x)) - H(\bar U(x,s)) - G(\bar U(x,s))\cdot(A(U(x,s))-A(\bar U(x,s)))\,dx\nonumber \\
	&\quad + \int\limits_{s}^{\tau}\int\limits_{\Dom}b(t) H(U|\bar{U}) \,dxdt\label{eqn:interm4.2}
\end{align}
and we now wish to pass to the limit $s\to 0$. By \eqref{eq:proof_H}, \eqref{eq:proof_A} and the fact that $b\in L^1((0,T))$ we infer that, as $s\to 0$,
\begin{equation}\label{eq:sto0_1}
\int\limits_{s}^{\tau}\int\limits_{\Dom}b(t) H(U|\bar{U}) \to \int\limits_{0}^{\tau}\int\limits_{\Dom}b(t) H(U|\bar{U}).
\end{equation}
Moreover, $\bar U \in C(0,T; L^1(\Dom))$, i.e. since $\bar U(x,0) = U_0(x)$ and $\bar U\in L^\f(Q\times (0,T))$,
\[
\lim_{s\to 0}\int\limits_\Dom|\bar U(x,s) -  U_0(x)|^r = 0 \mbox{ for all }r < \f.
\]
Similarly, by the assumed polynomial growth on $H$, $G$ and $A$ in \descref{H2}{(H2)} and dominated convergence, we find that
\begin{align}
0 & = \lim_{s\to 0}\int\limits_\Dom |H(\bar U(x,s)) - H(U_0(x))|\nonumber \\
&= \lim_{s\to 0}\int\limits_\Dom |G(\bar U(x,s)) - G(U_0(x))|\nonumber \\
&= \lim_{s\to 0}\int\limits_\Dom |G(\bar U(x,s))A(\bar U(x,s)) - G(U_0(x)) A(U_0(x))|,\label{eq:sto0_2}
\end{align}
where the above convergences also hold in $L^r(\Dom)$, $r<\f$, since $\bar U \in L^\f(\Dom\times (0,T))$.
Next, {\color{black} note that by the assumed coercivity \descref{H1}{(H1)} of $H$ in \eqref{eq:coercivity_condition} and Remark \ref{remark0}, $A(U) \in C_{\rm weak}(0,T; L^{p}(\Dom))$}, where $p > 1$, i.e. 
\[
\lim_{s\to 0}\int\limits_\Dom  \varphi(x) \cdot (A(U(x,s)) - A(U_0(x))) \,dx \to 0,\mbox{ for all }\varphi\in L^{\frac{p}{p - 1}}.
\]
Together with the strong convergence $G(\bar U(\cdot,s)) \to G(U_0)$ in $L^r$, $r<\f$, we find that
\begin{equation}\label{eq:sto0_3}
\lim_{s\to 0}\int\limits_\Dom  G(\bar U(x,s)) \cdot A(U(x,s)) \,dx \to \lim_{s\to 0}\int\limits_\Dom  G( U_0(x)) \cdot A(U_0(x)) \,dx.
\end{equation}
Through \eqref{eq:sto0_1}--\eqref{eq:sto0_3}, \eqref{eqn:interm4.2} now reads
\begin{equation*}
\int\limits_{\Dom}H(U|\bar{U})(x,\tau)\,dx\leq \int\limits_{0}^{\tau}\int\limits_{\Dom}b(t) H(U|\bar U) \,dxdt
\end{equation*}
and Gr\"onwall's inequality says that
\[
\int_{\Dom} H(U|\bar{U})(x,\tau) \leq 0.
\]
Lemma \ref{lemma:etarel_lower_bound} concludes the proof.
\end{proof}

We next present a sketch of the proof of Corollary \ref{theorem2} which removes the Besov regularity in time when $A$ is linear.

{\color{black}\begin{proof}[Proof of Corollary \ref{theorem2}]
We may proceed exactly as in the proof of Theorem \ref{theorem1} to reach \eqref{eqn:interm4} and we need to justify the passage to \eqref{eqn:interm4.1}. Note that we need only justify that the error terms $\mathcal{S}_\e$, $\mathcal{T}_\e$ vanish in the limit, as all other terms do not involve the Besov regularity. To estimate $\mathcal{S}^\e$, note that $\pa_tA(\bar U)_\e=\pa_tA(\bar{U}_\e)$ for $A$ linear and thus
\[
\|\mathcal{S}^\e\|_{L^1(Q\times(s,\tau))} \leq C \int\limits_s^\tau\int\limits_\Dom |U - \bar U_\e| |\partial_k (f_k(\bar U_\e) - f_k(\bar U)_\e)|,
\]
replaces the estimate above \eqref{limit_pass:R1}. Then, by H\"older's inequality we find that
\begin{equation}\label{limit_pass:R1.1}
\|\mathcal{S}^\e\|_{L^1(Q\times(s,\tau))}  \leq C  \e^{2\alpha-1} \int_s^\tau \|U(\cdot,t) - \bar U_\e(\cdot,t)\|_{L^{\frac{q}{q-2}}(\Dom)}\left(1 + |\bar{U}(\cdot,t)|^2_{B^{\alpha,\f}_{q}(\Dom)}\right) \to 0,\,\,\e\to0
\end{equation}
since, for $p \geq q/(q-2)$, $U \in L^\f(0,T;L^{\frac{q}{q-2}}(\Dom))$ and $\bar{U} \in L^1(0,T; B^{\alpha,\f}_{q}(\Dom))$. For $\mathcal{T}^\e$, by H\"older's inequality, Lemma \ref{lemma_commutator} and for $G = D\eta$, we estimate
\begin{align}
\|\mathcal{T}^\e\|_{L^1(\Dom\times (s,\tau))} & \leq \sum_i  \int_s^\tau  \|\pa_k D\eta_i(\bar{U}_\e(\cdot,t)) - \pa_k D\eta_i(\bar U(\cdot,t))_\e\|_{L^{\frac{q}{2}}(\Dom)}\|\left(f_k\right)_i(U(\cdot,t)|\bar{U}_\e(\cdot,t))\|_{L^{\frac{q}{q-2}}(\Dom)}\nonumber \\
& \leq C \e^{2\alpha-1}\int_s^\tau \left(1 + |\bar{U}(\cdot,t)|^2_{B^{\alpha,\f}_{q}(\Dom)}\right)\, \sum_i\|\left(f_k\right)_i(U(\cdot,t)|\bar{U}_\e(\cdot,t))\|_{L^{\frac{q}{q-2}}(\Dom)}\nonumber\\
& \leq C  \e^{2\alpha-1} \left(\int_s^\tau \left(1 + |\bar{U}(\cdot,t)|^2_{B^{\alpha,\f}_{q}(\Dom)}\right) \right)  \sum_i \sup_t \|\left(f_k\right)_i(U(\cdot,t)|\bar{U}_\e(\cdot,t))\|_{L^{\frac{q}{q-2}}(\Dom)}.\nonumber
\label{eq:estimate_R2.1}
\end{align}
But $\bar U \in L^1(0,T; B^{\alpha,\f}_{q}(\Dom))$ and
\[
\left|\left(f_k\right)_i(U|\bar{U}_\e) \right| \lesssim |(f_k)_i(U)| + 1 + |U|
\]
where $U\in L^\f(0,T;L^{p}(\Dom))$ and, by \descref{H2c}{(H2c)}, also
\[
(f_k)_i(U) \in L^\f(0,T;L^{L}(\Dom)).
\] 
Since $q/(q-2) \leq \min\{p,L\}$, we deduce that $\mathcal{T}^\e \to 0$ in $L^1$ and the remaining proof proceeds exactly as in Theorem \ref{theorem1}.
\end{proof}
}


%
%
%
%
%
%


\section{Applications}\label{models}

In this section, we present some systems of conservation laws that fit into the general setting \eqref{eqn:conlawgen}. We show that they fulfil assumptions \descref{H0}{(H0)}--\descref{H2}{(H2)}, and express the one-sided condition \eqref{ineq:general} for the given systems. In particular, we recover the result in \cite{FGJ} for the isentropic Euler system, and provide new examples for the systems in elasticity and shallow water magnetohydrodynamics.


\subsection{Isentropic Euler system}\label{sec:isnE}
As a first application of our main result, we consider the isentropic Euler system taking the following form:
\begin{equation}\label{isentropic_euler}
\begin{array}{rll}
\pa_t \rho+\mbox{div}(\rho{v})&=&0,\\
\pa_t (\rho {v})+\mbox{div}(\rho{v}\otimes {v})+\nabla p(\rho)&=&0,
\end{array}\mbox{ for }(x,t)\in\Dom\times\re_+,
\end{equation}
where $p(\rho) = \rho^\gamma$, for some $\gamma>1$. Note that the above equations fit the general framework \eqref{eqn:conlawgen} with
\begin{equation}\label{def:isen:A-F}
U = \left(\begin{array}{c} \rho \\ v \end{array}\right),\quad A(U) = \left(\begin{array}{c} \rho \\ \rho v \end{array}\right),\quad F_k(U) = \left(\begin{array}{c} \rho v_k \\ \rho v v_k + p(\rho) e_k  \end{array}\right).
\end{equation}
The mass density $\rho$ is required to be positive and thus
\[
\mathcal{O} = \left\{(\rho,v)\in \re\times \re^d\,:\, \rho > 0\right\}.
\]
For system \eqref{isentropic_euler}, the functions $G$ and $H$ are given by 
\begin{equation}
G(U)=\left(P^{\p}(\rho)-\frac{1}{2}\abs{v}^2,v^{T}\right)\mbox{ and }H(U)=\frac{1}{2}\rho\abs{v}^2+P(\rho),
\end{equation}
where 
\[
P(\rho)=\rho\int\limits_{1}^{\rho}\frac{p(r)}{r^2}\,dr.
\]
Denoting by $\mathbb{I}_d$ the identity $d\times d$ matrix and by $e_k$ the $k$-th vector in the standard basis of $\re^d$, an elementary calculation gives that
\begin{align}
DA(U)&=\begin{pmatrix}
1&{0}^T\\
v&\rho\mathbb{I}_d
\end{pmatrix},\,
DA(U)^{-1}=\frac{1}{\rho}\begin{pmatrix}
\rho&{0}^T\\
-v&\mathbb{I}_d
\end{pmatrix},\label{derivative:A:isen}\\
DF_k(U)&=\begin{pmatrix}
v_k&\rho e_k^T\\
vv_k+p^{\p}(\rho)e_k&\rho v_k\mathbb{I}_d+\rho v\otimes e_k
\end{pmatrix}.\label{derivative:Fk:isen}
\end{align}
Moreover, we observe that
\begin{equation*}
G(U)D^2A(U)=\left(P^{\p}(\rho)-\frac{1}{2}\abs{v}^2\right)\begin{pmatrix}
0&{0}^T\\
0&\textbf{0}_d
\end{pmatrix}+v_k
\begin{pmatrix}
0&e_k^{T}\\
e_k&\textbf{0}_d
\end{pmatrix}=\begin{pmatrix}
0& v^{T}\\
v&\textbf{0}_d
\end{pmatrix},
\end{equation*}
where $\textbf{0}_d$ denotes the zero $d\times d$ matrix. Then, by the convexity of $P$ we find that
\begin{equation*}
D^2H-G(U)D^2A(U)=\begin{pmatrix}
P^{\p\p}(\rho)&  v^T\\
v&\rho\mathbb{I}_d
\end{pmatrix}-\begin{pmatrix}
0& v^{T}\\
v&\textbf{0}_d
\end{pmatrix}=\begin{pmatrix}
P^{\p\p}(\rho)&  0^T\\
0&\rho\mathbb{I}_d
\end{pmatrix}>0
\end{equation*}
in $\mathcal{O}$ and \eqref{eq:convexity gen} is satisfied. Next, set $p = 2\g/(\ga+1)$ and note that $\g > 2\g/(\g + 1)$ for $\g >1$. Then, by Young's inequality and the fact that $|z|^q \leq 1 + |z|^p$ for $p>q$, we may estimate that
\begin{align*}
|A(U)|^p & \lesssim \rho^p + (\rho |v|)^p \\
& \lesssim 1+ \rho^{\g}+\rho^{2\ga/(\ga+1)}\abs{v}^{2\g/(\g+1)}\\
& \lesssim 1 + \rho^{\g}+\rho\abs{v}^2\lesssim 1+ H(U).
\end{align*}
%
By a similar argument we have
\begin{align}
\abs{F_k(U)} & \leq \rho\abs{v_k}+\rho\abs{v_kv}+p(\rho)\nonumber\\
& \lesssim \rho+\rho\abs{v}^2+\rho^{\g}\nonumber\\
&\lesssim 1+\rho^{\g}+\rho\abs{v}^2\lesssim 1+H(U).
\end{align}
Therefore, assumptions \descref{H0}{(H0)}, \descref{H1}{(H1)}, \descref{H2a}{(H2a)}, and \descref{H2b}{(H2b)} are satisfied. Moreover, from \eqref{derivative:A:isen} and \eqref{derivative:Fk:isen} we have
\begin{align*}
DF_k(U)DA(U)^{-1}&=\frac{1}{\rho}\begin{pmatrix}
v_k&\rho e_k^T\\
vv_k+p^{\p}(\rho)e_k&\rho v_k\mathbb{I}_d+\rho v\otimes e_k
\end{pmatrix}\begin{pmatrix}
\rho&{0}^T\\
-v&\mathbb{I}_d
\end{pmatrix}\\&
=\begin{pmatrix}
0& e_k^T\\
-vv_k+p^{\p}(\rho)e_k&v_k\mathbb{I}_d+ v\otimes e_k
\end{pmatrix}.
\end{align*}
Recalling the definition of $F_k(\xi|\bar{\xi})$ as in \eqref{eqn:defn_Z_k}, we observe that
\begin{align*}
F_k({U}|\bar{U})&=F_k({U})-F_k(\bar{U})-DF_k(\bar{U})DA(\bar{U})^{-1}(A({U})-A(\bar{U}))\\
&=\left(\begin{array}{c} {\rho} {v}_k-\bar{\rho} \bar{v}_k \\ {\rho} {v} {v}_k + p({\rho}) e_k -\bar{\rho} \bar{v} \bar{v}_k - p(\bar{\rho}) e_k \end{array}\right)
-\begin{pmatrix}
0& e_k^T\\
-\bar{v}\bar{v}_k+p^{\p}(\bar{\rho})e_k&\bar{v}_k\mathbb{I}_d+ \bar{v}\otimes e_k
\end{pmatrix}\begin{pmatrix}
{\rho}-\bar{\rho}\\
{\rho}{v}-\bar{\rho}\bar{v}
\end{pmatrix}\\
&=\begin{pmatrix}
0\\
\bar{\rho} ({v}-\bar{v})( {v}_k-\bar{v}_k) 
\end{pmatrix}+
\begin{pmatrix}
0\\
(p({\rho}) -p(\bar{\rho})-({\rho} -\bar{\rho})p^{\p}(\bar{\rho}))e_k
\end{pmatrix}.
\end{align*}
Note that $(F_k)_1(U|\bar{U}) = 0$ where $(G)_1$ is the nonlinear component of $G$ and \descref{H2c}{(H2c)} is also satisfied. We note that the system can be expressed in alternative variables and we refer the reader to \cite{GKS} for an analysis as above.
Hence, the required Besov regularity is in $B^{\alpha,\f}_q$ where
\[
\frac{q}{2} \geq \frac{p}{p - 1} = p^\prime = \left(\frac{2\gamma}{\gamma + 1}\right)^\prime = \frac{2\gamma}{\gamma -1}
\]
which agrees with \cite{FGJ}. We also compute that
\begin{align*}
H(U|\bar U)&=H(U)-H(\bar{U})-G(\bar{U})(A({U})-A(\bar{U}))\\
&=\frac{1}{2}{\rho}\abs{{v}}^2+P({\rho})-\frac{1}{2}\bar{\rho}\abs{\bar{v}}^2-P(\bar{\rho})-\left(P^{\p}(\bar{\rho})-\frac{1}{2}\abs{\bar{v}}^2,\bar{v}^{T}\right)\begin{pmatrix}
{\rho}-\bar{\rho}\\
{\rho}\bar{v}-\bar{\rho}\bar{v}
\end{pmatrix}\\
&=\frac{1}{2}{\rho}\abs{{v}-\bar{v}}^2+P({\rho}|\bar{\rho}).
\end{align*}
To check the one-sided condition, let
\[
\xi = (\xi_\rho, \xi_v),\,\,\bar \xi = (\bar{\xi}_\rho, \bar{\xi}_v) \in \re^+\times\re^d
\]
and $\bar U = (\bar \rho,\bar v)$ to find that
\begin{equation*}
\pa_k G(\bar U)\cdot F_k(\xi|\bar{\xi})= \xi_\rho \nabla_x \bar{v}: (\xi_v - \bar{\xi}_v)\otimes (\xi_v - \bar{\xi}_v)+ \left(p(\xi_\rho) -p(\bar{\xi}_\rho)-(\xi_\rho - \bar{\xi}_\rho)p^{\p}(\bar{\xi}_\rho)\right)\mbox{div}\,\bar{v}.
\end{equation*}
In \cite{FGJ}, the assumed one-sided condition was
\begin{equation}\label{eq:FGJ_condition}
z_k\pa_k v\cdot z\geq D(t)\abs{z}^2,\mbox{ for all }z\in \re^d,
\end{equation}
for some $D\in L^1((0,T))$. Note that in \eqref{eq:FGJ_condition}, we may choose $z = \xi_v - \bar{\xi}_v$, as well as $z = e_k$ to deduce that
\begin{equation*}
\pa_k G(\bar U)\cdot F_k(\xi |\bar{\xi})\gtrsim b(t)\big({\xi_\rho}\abs{\xi_{v}-\bar{\xi}_{v}}^2+ P(\xi_{\rho}|\bar{\xi}_{\rho})\big) =  b(t)H(\xi|\bar{\xi}),
\end{equation*}
for an appropriate $b \in L^1((0,T))$. This recovers the result of \cite{FGJ}.


\subsection{Elasticity}

In this section we consider the system of elasticity where, for homogeneous materials and in the absence of external forces, the balance of linear momentum takes the form
\begin{equation}\label{2ndOrderPDE}
\pa_t^2 y=\dv_x \Sigma(\nabla_xy).
\end{equation}
Here $y: Q\times(0,T)\rr\R^d$ denotes the deformation and $\Sigma$ is the Piola-Kirchhoff stress tensor which depends on the deformation gradient, but not $y$ or $\pa_t y$, as a consequence of frame-indifference. Letting $F:=\nabla y$ and ${v}=\pa_t y$, \eqref{2ndOrderPDE} can be written as the following system of conservation laws: 
\begin{equation}
\begin{array}{rll}
\pa_t {v}&=&\dv \Sigma(F),\\
\pa_t F&=&\nabla {v},
\end{array}\mbox{ for }(x,t)\in Q\times(0,T).\label{HE2}
\end{equation} 
Henceforth, we impose the assumption of hyperelasticity, i.e. that 
\[
\Sigma(F) = D_F W(F) = \left(\frac{\pa W}{\pa F_{i\alpha}}\right)_{i\alpha},
\]
where $W:\re^{d\times d} \to \re$ is the stored-energy function and we have adopted the convention of using greek and latin indices, respectively, for variables in the reference and deformed configurations. We note that system \eqref{HE2} can be written in the form
\[
\pa_t U + \pa_\alpha f_\alpha(U) = 0,
\]
where, writing $\{e_i\}_{1\leq i\leq d+d^2}$ for the standard basis of $\R^{d+d^2}$,
\[
U = \left(\begin{array}{c} v \\ F \end{array}\right) = {v}_i e_i+F_{i\alpha}e_{\alpha+di}\mbox{ and }f_\alpha(U) = \Sigma_{i\alpha}(F)e_i+{v}_ie_{\alpha + di}.
\]
Moreover, system \eqref{HE2} is endowed with the entropy-entropy flux pair
\[
\eta(U) = \eta(v,F) = \frac{1}{2} |v|^2 + W(F)\mbox{ and }q_\alpha(U) = q_\alpha(v,F) =  v_{i}\Sigma_{i\alpha}(F).
\]
We note that often the condition that $W(F)\to\infty$, as $\det F\to 0^+$ and $W(F) \equiv \infty$ if $\det F \leq 0$ is regarded as a physical requirement to exclude the interpenetration of matter. Then $\mathcal{O} = \{F:\det F > 0\}$ which for $d>1$ is a nonconvex set and gives rise to several open problems in the mathematical treatment of elasticity. Thus, we do not impose such assumptions and generally consider $\mathcal{O} = \R^d$. We refer the reader to \S \ref{sec:polyconvex} for further comments as well as to \cite{Ball_open_problems}.


\subsubsection{Convex elasticity and the one-dimensional case}

If the stored-energy function $W\in C^2$ is assumed strongly convex, system \eqref{HE2} fits into the present setting by imposing a coercivity and growth assumption of the form
\[
-1 + |F|^{p_1} \lesssim W(F) \lesssim 1 + |F|^{p_1},\quad p_1\geq 2,
\]
Indeed, \descref{H1}{(H1)} is then satisfied with $p = 2$. Moreover, by the assumed growth and convexity (in fact separate convexity suffices, see \cite[Proposition 2.32]{Dacorogna}), it follows that
\[
|DW(F)| \lesssim 1 + |F|^{{p_1}-1} \lesssim 1 + |F|^{p_1} \lesssim 1 + W(F).
\]
In particular, $|f_\al(v, F)| + |(v,F)|\lesssim 1 + \eta(v,F)$ which ensures \descref{H2a}{(H2a)} and \descref{H2b}{(H2b)}.

Next, note that
\[
D\eta(v,F) = \left(v, \Sigma(F)\right)^T
\]
whereas with an abuse of notation
\[
f_\alpha(v,F|\bar{v},\bar{F}) = \left(\Sigma_{i\alpha}(F|\bar{F}), 0\right)^T = \Sigma_{i\alpha}(F|\bar{F})e_i,
\]
where $\Sigma(F|\bar{F}) = \Sigma(F) - \Sigma(\bar{F}) - D\Sigma(\bar{F})(F-\bar{F})$. Then, $\left(f_\alpha(v,F|\bar{v},\bar{F})\right)_i = 0$ for all $i$ such that $\left(D\eta\right)_i$ is nonlinear, implying \descref{H2c}{(H2c)}. Hence, Theorem \ref{theorem1} applies to dissipative solutions
\[
(\bar{v},\bar{F}) \in L^1(\delta, T; B^{\alpha,\f}_q(\Dom)),\quad q\geq 4.
\]
Also, letting $\xi = (\xi_v,\xi_F)$, $\bar{\xi} = (\bar{\xi}_v,\bar{\xi}_F) \in\R^{d}\times\R^{d\times d}$, the one-sided condition \eqref{ineq:general} becomes
\begin{equation}\label{condition:H-E}
\left(\pa_\alpha \bar{v}_i\right) \Sigma_{i\alpha}(\xi_F|\bar{\xi}_F) + b(t) W(\xi_F|\bar{\xi}_F) \geq 0.
\end{equation}
Note that the above condition does not depend on $\bar{F}$. 

\paragraph{The case $d=1$} If $d=1$ system \eqref{HE2} is similar to the $p$-system and when $\Sigma^{\p\p}\neq0$ it becomes strictly hyperbolic with both characteristic fields genuinely nonlinear. It is then known that a shock-free solution to the Riemann problem satisfies $-sgn(\Sigma^{\p\p})\pa_x v(t,x)\geq0$ for a.e. $x\in\re$ and $t>0$ (see \cite{ChHs,ChFrLi,Smo}). Therefore it also satisfies \eqref{ineq:general} with $b\equiv0$ provided that the solution remains in the region $\mathcal{O}\subset\{\Sigma^{\p\p}\neq0\}$.

Next, suppose that $(v,F)$ is any Lipschitz solution to \eqref{HE2} which is self-similar, i.e. assume that 
\begin{equation*}
({v},F)(t,x)=(V(x/t),\mathcal{F}(x/t))
\end{equation*}
solves the following system:
\begin{align}
\zeta V^{\p}(\zeta)+\Sigma^{\p}(\mathcal{F}(\zeta))\mathcal{F}^{\p}(\zeta)&=0,\label{eqn:sshe1}\\
\zeta \mathcal{F}^{\p}(\zeta)+V^{\p}(\zeta)&=0.\label{eqn:sshe2}
\end{align}
Combining \eqref{eqn:sshe1} and \eqref{eqn:sshe2} we have
\begin{equation*}
\zeta^2 \mathcal{F}^{\p}(\zeta)=\Sigma^{\p}(\mathcal{F}(\zeta))\mathcal{F}^{\p}(\zeta).
\end{equation*}
If $\mathcal{F}^{\p}\neq0$ we find that $\zeta^2=\Sigma^{\p}(\mathcal{F}(\zeta))$ and, differentiating with respect to $\zeta$, that
\begin{equation*}
2\zeta=\Sigma^{\p\p}(\mathcal{F}(\zeta))\mathcal{F}^{\p}(\zeta).
\end{equation*} 
Hence, if $W$ is convex, i.e. $\Sigma^\p > 0$, we deduce that
\begin{equation*}
-\zeta^2\Sigma^{\p\p}(\mathcal{F}(\zeta))V^{\p}(\zeta)=\zeta\frac{2\zeta}{\mathcal{F}^{\p}(\zeta)}\Sigma^{\p}(\mathcal{F}(\zeta))\mathcal{F}^{\p}(\zeta)=2\zeta^2 \Sigma^{\p}(\mathcal{F}(\zeta))\geq0.
\end{equation*}
Therefore, in one space-dimension, any self-similar Lipschitz solution satisfies \eqref{ineq:general} with $b\equiv0$ and $\mathcal{O}\subset\{\Sigma^{\p\p}\neq0\}$.


\subsubsection{Polyconvex elasticity}\label{sec:polyconvex}
We note that convexity of the stored-energy function $W$ is ruled out in elasticity as a consequence of frame-indifference, a physical invariance that is axiomatic in continuum mechanics\footnote{Similarly, in nonlinear theories of electromagnetism convexity can be ruled out due to Lorenz invariance \cite{Serre}.}.
Instead, motivated by the static theory, a natural convexity condition for $W$ in elasticity is quasiconvexity (in the sense of Morrey), see \cite{Dacorogna}. In particular, $W$ is then also rank-one convex which implies the symmetrisability of system \eqref{HE2}. These are conditions strictly weaker than convexity and they become appropriate due to the existence of involutions for the system of elasticity. We refer the reader to \cite{Daf86, KS19, KV} for investigations on weak-strong uniqueness results for elasticity and general systems admitting involutions under these relaxed convexity assumptions. We note that the required regularity on the strong solution in these works is inconsistent with the requirements in the present article. 

Another convexity condition that arises naturally in the context of elasticity is polyconvexity which, in the case $d=3$, amounts to the existence of a convex function $\mathcal{G}:\R^{d\times d}\times\R^{d\times d}\times\R$ such that
\[
W(F)=\mathcal{G}(F,{\rm cof}(F),\det(F)).
\]
Indeed, polyconvex energies describe many physical models in elasticity, it is stronger that quasiconvexity, yet weaker than convexity, and allows for an existence theory in statics even under the mathematically challenging assumption that $W(F)\to\f$, as $\det F \to 0^+$, see \cite{Ball76}. The dynamic equations also admit a good theory for polyconvex energies and we refer the reader to \cite{tzavaras,tzavaras-weak-strong}. In particular, the polyconvex theory in dynamics finds its origins in the observation of Qin \cite{Qin} that smooth solutions of \eqref{HE2} satisfy the additional conservation laws
\begin{equation}\label{eq:Qin}
\begin{aligned}
\pa_t \det F & = \pa_\alpha\left(({\rm cof}F)_{i\alpha}v_i\right),\\
\pa_t ({\rm cof}F)_{k\gamma} & = \pa_\alpha\left(\e_{ijk}\e_{\alpha\beta\gamma}F_{j\beta}v_i\right).
\end{aligned}
\end{equation}
The validity of \eqref{eq:Qin} for $F = \nabla y$ is due to the fact that the minors are null-Lagrangians. Following the notation of \cite{tzavaras}, we find that system \eqref{HE2} can be embedded into the enlarged system
\begin{align}
\pa_t v_i&=\pa_{\al}(g_{i\al}(F,Z,w;F)),\label{HE-poly1}\\
\pa_t F_{i\al}&=\pa_{\al}v_i,\label{HE-poly2}\\
\pa_t Z_{k\ga}&=\pa_{\al}\left(\epsilon_{ijk}\epsilon_{\al\B\ga}F_{j\B}v_i\right),\label{HE-poly3}\\
\pa_tw&=\pa_{\al}(cof(F)_{i\al}v_i),\label{HE-poly4}
\end{align}
where $g_{i\al}$ is defined as
\begin{equation*}
g_{i\al}(F,Z,w;\tilde{F})=\frac{\pa \mathcal{G}}{\pa F_{i\al}}(F,Z,w)+\frac{\pa \mathcal{G}}{\pa Z_{k\ga}}(F,Z,w)\epsilon_{ijk}\epsilon_{\al\B\ga}\tilde{F}_{j\B}+(cof (\tilde{F}))_{i\al}\frac{\pa \mathcal{G}}{\pa w}(F,Z,w).
\end{equation*}
Indeed, the embedding of elasticity in the above system relies on the fact that the minors are themselves involutions of \eqref{HE-poly1}--\eqref{HE-poly4} in the sense that if at the initial time the augmented variables $(F,Z,w)$ are given by $(F,{\rm cof}F,\det F)$, then the same holds for all subsequent times.

For a strictly polyconvex $W$, system \eqref{HE-poly1}--\eqref{HE-poly4} falls into the present setting as it can be expressed in the form
\[
\pa_t U + \pa_\alpha f_\alpha(U) = 0,
\]
where, letting $e_i$ denote the standard basis in $\R^{22}$,
\begin{align*}
U&= \left(v,F,Z,w\right)^T = v_ie_i+F_{i\alpha}e_{\alpha + 3i}+Z_{k\ga}e_{9+3k+\ga}+we_{22},\\
f_\al(U)&=g_{i\al}(F,Z,w;F)e_{i}+v_{i}e_{\al + 3i}+\epsilon_{ijk}\epsilon_{\al\B\ga}F_{j\B}v_ie_{9+3k+\ga}+ {\rm cof}(F)_{i\al}v_ie_{22}
\end{align*}
and we recall that
\[
\left({\rm cof}F\right)_{i\alpha} = \frac12 \e_{ijk}\e_{\al\beta\ga} F_{j\beta}F_{k\ga}.
\]
Moreover, system \eqref{HE-poly1}--\eqref{HE-poly4} is endowed with the strictly convex entropy
\begin{equation*}
\eta(v,F,Z,w)=\frac{1}{2}\abs{v}^2+\mathcal{G}(F,Z,w).
\end{equation*}
In accordance with \cite{tzavaras,tzavaras-weak-strong}, we assume that $\mathcal{G}\in C^2$ satisfies
\begin{equation}\label{eq:growth_polyconvex}
-1 + |F|^{p_1} + |Z|^{p_2} + |w|^{p_3} \lesssim \mathcal{G}(F,Z,w) \lesssim 1 + |F|^{p_1} + |Z|^{p_2} + |w|^{p_3}, \quad p_1 > 4, \,\,p_2,\,p_3\geq 2
\end{equation}
and
\begin{equation}\label{eq:growth_D_polyconvex}
\abs{\frac{\pa \mathcal{G}}{\pa F}}+\abs{\frac{\pa \mathcal{G}}{\pa Z}}^{\frac{p_1}{p_1-1}}+\abs{\frac{\pa \mathcal{G}}{\pa w}}^{\frac{p_1}{p_1-2}}\lesssim 1+ \abs{F}^{p_1}+\abs{Z}^{p_2}+\abs{w}^{p_3}.
\end{equation}
We remark that in \cite{tzavaras,tzavaras-weak-strong} the requirement that $p_1>4$ relates to the validity of the weak continuity of minors in a Sobolev regularity setting. Next, note that \eqref{eq:growth_polyconvex} and \eqref{eq:growth_D_polyconvex} ensure \descref{H1}{(H1)}, \descref{H2a}{(H2a)}, and \descref{H2b}{(H2b)}. Indeed,
\begin{align*}
|f_\al(v,F,Z,w)| & \lesssim |g_{i\al}(F,Z,w)| + |v| + |F||v| + |F|^2|v|  \\
& \lesssim 1+ |v|^2 + |F|^4 + \abs{\frac{\pa \mathcal{G}}{\pa F}}+\abs{\frac{\pa \mathcal{G}}{\pa Z}} |F|+\abs{\frac{\pa \mathcal{G}}{\pa w}} |F|^2\\
& \lesssim 1+ |v|^2 + |F|^{p_1} + \abs{\frac{\pa \mathcal{G}}{\pa F}}+\abs{\frac{\pa \mathcal{G}}{\pa Z}}^{\frac{p_1}{p_1-1}}+\abs{\frac{\pa \mathcal{G}}{\pa w}}^{\frac{p_1}{p_1-2}} \\
& \lesssim 1 + \eta(v,F,Z,w).
\end{align*}
In the convex setting, the property that $|DW(F)| \lesssim 1 + |F|^{r-1}$ for separately convex functions with $r$-growth allowed us to fulfil \descref{H2b}{(H2b)}. Regarding \descref{H2c}{(H2c)},  we observe that
\begin{equation*}
G = D\eta=\left(v,\frac{\pa \mathcal{G}}{\pa{F}},\frac{\pa \mathcal{G}}{\pa Z},\frac{\pa \mathcal{G}}{\pa w}\right)
\end{equation*}
and $(D\eta)_i$ is linear for $i=1,2,3$. Moreover, a tedious computation shows that
\begin{align*}
f_\al(U|\bar{U})&=g_{i\al}(F,Z,w;F|\bar{F},\bar{Z},\bar{w};\bar{F})e_{i}+\epsilon_{ijk}\epsilon_{\al\B\ga}(F_{j\B}-\bar{F}_{j\B})(v_{i}-\bar{v}_{i})e_{9+3k+\ga}\nonumber\\
&\quad + \epsilon_{ijk}\epsilon_{\al\B\ga}(F_{j\B}-\bar{F}_{j\B})(F_{k\ga}-\bar{F}_{k\ga})v_i e_{22}
\end{align*}
and thus, for $i\geq 4$, $f_\al(U|\bar{U})_i \not\equiv 0$. However, for $L \leq 2p_1/(p_1 + 4)$, we find that $L \in (1,2)$ (as $p_1 >4$) and by Young's inequality we compute that
\begin{align*}
|f_\al(U)_i|^{L} & \lesssim 1+ |v|^{2} + \left(|F||v|\right)^{\frac{2p_1}{p_1+4}} + \left(|F|^2|v|\right)^{\frac{2p_1}{p_1+4}} \\
& \lesssim 1+ |v|^{2} + \left(|F|^{2\frac{p_1+4}{4}} + |v|^{\frac{p_1 + 4}{p_1}}\right)^{\frac{2p_1}{p_1+4}} \\
& \lesssim 1+ |v|^{2} + |F|^{p_1} \lesssim 1 + \eta(v,F,Z,w),
\end{align*}
which is \descref{H2c}{(H2c)}. Hence, Theorem \ref{theorem1} applies with $q\geq \max\{4,2p_1/(p_1-4)\}$ and letting $\xi = (\xi_v,\xi_F,\xi_Z,\xi_w)$, and respectively for $\bar{\xi}$, the one-sided condition becomes
\begin{align}
\pa_\al D\eta(\bar{U})\cdot f_{\al}(\xi|\bar{\xi}) & =g_{i\al}(\xi_F,\xi_Z,\xi_w;\xi_F|\bar{\xi}_F,\bar{\xi}_Z,\bar{\xi}_w;\bar{\xi}_F)\pa_\al\bar{v}_i \nonumber\\
&\quad +\epsilon_{ijk}\epsilon_{\al\B\ga}\left(\left(\xi_F\right)_{j\B}-\left(\bar{\xi}_F\right)_{j\B}\right)\left\{\left(\left({\xi}_v\right)_{i}-\left(\bar{\xi}_v\right)_{i}\right)\pa_\al\left(\frac{\pa \mathcal{G}}{\pa Z_{k\ga}}(\bar{U})\right) \right. \nonumber\\
&\quad\quad \left. + \left(\left(\xi_F\right)_{k\ga}-\left(\bar{\xi}_F\right)_{k\ga}\right)\left(\xi_v\right)_i \pa_\al\left(\frac{\pa \mathcal{G}}{\pa w}(\bar{U})\right) \right\}.\label{condition:H-E-poly}
\end{align}
Note that unlike the convex case \eqref{condition:H-E}, the condition for polyconvex elasticity also depends on $\bar{F}$.

\subsection{Shallow water magnetohydrodynamics}
We next consider the system for shallow water magnetohydrodynamics \cite{GKS} taking the form
\begin{align}
\pa_th+\dv_x(h{v})&=0,\label{eq:SWM1}\\
\pa_t(h{v})+\dv_x(h{v}\otimes{v}-h{b}\otimes{b})+\nabla_x(g h^2/2)&=0,\label{eq:SWM2}\\
\pa_t(h{b})+\dv_x(h{b}\otimes{v}-h{v}\otimes{b}) &=0,\label{eq:SWM3}
\end{align}
where $g>0$ is the gravitational constant. In the above system, $h$ and $v$ denote the thickness and velocity of the fluid respectively, and $b$ denotes the magnetic field. Note that typically the system for shallow water magnetohydrodynamics is presented by adding the term $v\dv_x(hb)$ on the left-hand side of \eqref{eq:SWM3}. However, if $\dv_x(hb)$ vanishes at the initial time, it remains zero and in accordance with \cite{GKS} we choose to work with system \eqref{eq:SWM1}--\eqref{eq:SWM3}. We note that the above equations fit the general framework \eqref{eqn:conlawgen} with
\begin{equation}\label{def:SWM:A-F}
U = \left(\begin{array}{c}h \\ v \\b \end{array}\right),\quad A(U) = \left(\begin{array}{c} h \\ hv\\ hb \end{array}\right),\quad F_k(U) = \left(\begin{array}{c} h v_k \\ h v v_k-hbb_k + (g h^2/2) e_k\\
hbv_k - hvb_k  \end{array}\right).
\end{equation}
The thickness $h$ is required to be positive and thus
\[
\mathcal{O} = \left\{(h,v,b)\in \re\times \re^d\times\R^d\,:\, h > 0\right\}.
\]
For system \eqref{eq:SWM1}--\eqref{eq:SWM3} we may choose $G$ and $H$ as  
\begin{equation}
G(U)=\left(g h-\frac{\abs{v}^2}{2}-\frac{\abs{b}^2}{2},v^{T}, b^{T}\right)\mbox{ and }H(U)=\frac{g}{2}h^2+\frac{1}{2}h\abs{v}^2+\frac{1}{2}h\abs{b}^2.
\end{equation}
By a similar calculation as in  \S \ref{sec:isnE} we get
\begin{align*}
DA(U)&=\begin{pmatrix}
1&{0}^T&{0}^T\\
v&h\mathbb{I}_d&\textbf{0}_d\\
b&\textbf{0}_d&h\mathbb{I}_d\end{pmatrix},\,
DA(U)^{-1}=\frac{1}{h}\begin{pmatrix}
h&{0}^T&{0}^T\\
-v&\mathbb{I}_d& \textbf{0}_d\\
-b&\textbf{0}_d&\mathbb{I}_d
\end{pmatrix},\\
DF_k(U)&=\begin{pmatrix}
v_k&h e_k^T&{0}^T\\
vv_k-bb_k+g he_k&h v_k\mathbb{I}_d+h v\otimes e_k&-h b_k\mathbb{I}_d-h b\otimes e_k\\
bv_k - vb_k& h b\otimes e_k - h b_k\mathbb{I}_d&h v_k\mathbb{I}_d - h v\otimes e_k\\
\end{pmatrix}.
\end{align*}
Next, we observe that
\begin{align*}
G(U)D^2A(U)&=\left(g h-\frac{1}{2}\abs{v}^2-\frac{1}{2}\abs{b}^2\right)\begin{pmatrix}
0&{0}^T&{0}^T\\
0&\textbf{0}_d&\textbf{0}_d\\
0&\textbf{0}_d&\textbf{0}_d
\end{pmatrix}+v_k
\begin{pmatrix}
0&e_k^{T}&{0}^T\\
e_k&\textbf{0}_d&\textbf{0}_d\\
{0}&\textbf{0}_d&\textbf{0}_d
\end{pmatrix}+b_k
\begin{pmatrix}
0&{0}^T&e_k^{T}\\
{0}&\textbf{0}_d&\textbf{0}_d\\
e_k&\textbf{0}_d&\textbf{0}_d
\end{pmatrix}
\nonumber\\
&=\begin{pmatrix}
0& v^{T}&b^{T}\\
v&\textbf{0}_d&\textbf{0}_d\\
b&\textbf{0}_d&\textbf{0}_d
\end{pmatrix}
\end{align*}
so that
\begin{equation*}
D^2H-G(U)D^2A(U)=\begin{pmatrix}
g&{0}^T&{0}^T\\
{0}&h\mathbb{I}_d&\textbf{0}_d\\
{0}&\textbf{0}&h\mathbb{I}_d 
\end{pmatrix}>0
\end{equation*}
in $\mathcal{O}$ and \eqref{eq:convexity gen} is satisfied. Let $p = 4/3$. By a similar argument as in \S\ref{sec:isnE} we obtain
\begin{align}\label{cal1:swm}
\abs{A(U)}^{\frac43} & \lesssim 1 + h^{2}+h^{2/3}\left(h^{2/3}\abs{v}^{4/3}\right)+h^{2/3}\left(h^{2/3}\abs{v}^{4/3}\right)\nonumber\\
& \lesssim 1+ h^{2}+h\abs{v}^2+h\abs{b}^2\lesssim 1+ H(U).
\end{align}
Moreover,
\begin{align*}
|F_k(U)| & \lesssim h^2 + h^{1/2}\left(h^{1/2}|v|\right) + h|v|^2 + h|b|^2 + \left(h^{1/2}|v|\right)\left(h^{1/2}|b|\right) \\
& \lesssim 1 + h^2 + h|v|^2 + h|b|^2 \lesssim 1 + H(U)
\end{align*}
so that \descref{H1}{(H1)}, \descref{H2a}{(H2a)}, and \descref{H2b}{(H2b)} are satisfied. See also \cite{GKS} for a similar analysis. Next, note that $G_1$ is the only nonlinear component of $G$. On the other hand, it is a matter of a calculation similar to \S\ref{sec:isnE} to verify that
\begin{align*}
F_k({U}|\bar{U})&=F_k({U})-F_k(\bar{U})-DF_k(\bar{U})DA(\bar{U})^{-1}(A({U})-A(\bar{U}))\\
&=\begin{pmatrix}
0\\
{h} ({v}-\bar{v})( {v}_k-\bar{v}_k)-{h}({b}-\bar{b})( {b}_k-\bar{b}_k)\\
{h}({b}-\bar{b})( {v}_k-\bar{v}_k) - {h} ({v}-\bar{v})( {b}_k-\bar{b}_k)
\end{pmatrix}+\frac{g}{2}
\begin{pmatrix}
0\\
({h}-\bar{h})^2e_k\\
0
\end{pmatrix}.
\end{align*}
In particular, $\left(F_k(\xi|\bar{\xi})\right)_1 \equiv 0$ and \descref{H2c}{(H2c)} also holds. We may thus apply Theorem \ref{theorem1} with $q \geq 8$. Regarding condition \eqref{ineq:general}, note that
\begin{align*}
H({U}|\bar{U})&=H({U})-H(\bar{U})-G(\bar{U})(A({U})-A(\bar{U}))\\
&=\frac{1}{2}{h}\abs{{v}-\bar{v}}^2+\frac{1}{2}{h}\abs{{b}-\bar{b}}^2+\frac{g}{2}\abs{{h}-\bar{h}}^2.
\end{align*}
Letting $\xi = (\xi_h,\xi_v,\xi_b)$, $\bar{\xi} = (\bar{\xi}_h, \bar{\xi}_v, \bar{\xi}_b) \in \R\times \R^d\times \R^d$, we then find that
\begin{align*}
\pa_k G(\bar{U})\cdot F_k(\xi|\bar{\xi})&= \xi_h \nabla_x \bar{v}: \left((\xi_v - \bar{\xi}_v)\otimes (\xi_v - \bar{\xi}_v)-(\xi_b - \bar{\xi}_b)\otimes (\xi_b - \bar{\xi}_b)\right)+\frac{g}{2}\abs{\xi_h - \bar{\xi}_h}^2\mbox{div}_x\bar{v}\nonumber\\
&+\xi_{h} \nabla_x \bar{b}: \left((\xi_b - \bar{\xi}_b)\otimes (\xi_v - \bar{\xi}_v) - (\xi_v - \bar{\xi}_v)\otimes (\xi_b - \bar{\xi}_b)\right).
\end{align*}
Note that condition \eqref{ineq:general} for the system of shallow water magnetohydrodynamics does not depend on the (distributional) derivative 
 of the thickness $h$.
 

\section{A nontrivial example and 1-D to multi-D extensions}\label{sec:triangular}
 In the previous examples, we looked at systems of the form \eqref{eqn:conlawgen} for which Theorem \ref{theorem1} can be applied and we investigated condition \eqref{ineq:general} for these systems. However, we are unable to construct explicit solutions satisfying the required regularity assumptions, yet fail to be Lipschitz. Note that Lipschitz functions immediately satisfy condition \eqref{ineq:general}. In the present section, we aim to find examples of solutions to systems of the form \eqref{eqn:conlawgen} that are merely in the H\"older space $C^{0,\B}$ but indeed satisfy the one-sided condition \eqref{ineq:general} and are thus unique by Theorem \ref{theorem1}.
 
 Indeed, in \S \ref{sec:triangular1}, we focus on a one-dimensional triangular system motivated by the study of multi-component chromatography and studied in \cite{Boris}. More precisely, based on the backward algorithm found in \cite{AGV-exact}, we provide a family of nontrivial $C^{0,\B}$ solutions satisfying \eqref{ineq:general} for a class of these triangular systems. Moreover, in \S \ref{sec:triangular2}, we propose a more restrictive form of system \eqref{eqn:conlawgen} which allows to extend 1-D to multi-D solutions. This way we obtain non-trivial states satisfying \eqref{ineq:general} for a multi-dimensional system. The proposed form is satisfied by the isentropic Euler system, shallow water magnetohydrodynamics, and the triangular system studied below.
 
 
  \subsection{Triangular system}\label{sec:triangular1}
Here, we construct a class of non-trivial solutions to the 1-D triangular system which reads as follows:
 \begin{equation}\label{sys:tri}
 \begin{array}{rl}
 \pa_t u+\pa_xf(u)&=0,\\
 \pa_tv+\pa_x(g(u)v)&=0,
 \end{array}\mbox{ for }(x,t)\in\Dom\times\R_+,
 \end{equation}
 where $f,\,g\in C^2(\R)$ and $f$ is strictly convex. System \eqref{sys:tri} is endowed with the smooth entropies
 \begin{equation}\label{def:tri-ent}
 \eta(u,v)=\psi(u)+e^{-\phi(u)}k(ve^{\phi(u)})
 \end{equation}
 where $\phi$ is the primitive of the function $u\mapsto \frac{g^{\p}(u)}{g(u)-f^{\p}(u)}$ which must be assumed integrable. We can check that $\eta(u,v)$ becomes strictly convex if $\psi$, $k$ are convex functions and $u\mapsto e^{-\phi(u)}$ is concave. For a detailed discussion we refer the reader to \cite{Boris}. 
 
 Henceforth, we assume that $g=h\circ f^{\p}$ for some $C^2$ function $h$ and we wish to investigate the conditions of Theorem \ref{theorem1} and \eqref{ineq:general} in particular. For the sake of simplicity, let us consider the function $f(u)=(q+1)^{-1}\abs{u}^{q+1}$ for $q\in[1,2)$, and we will later generalise the construction in \S \ref{sec:triangular1.1}. It is clear that $f\in C^2(\R)$ is strictly convex and $f^{\p}(u)=u\abs{u}^{q-1}$. Subsequently, $u(\cdot,t)\in C^{0,1/q}$ for each $t>0$ provided that $x\mapsto f^{\p}(u(x,t))$ is Lipschitz for $t>0$, and hence $u(\cdot,t)\in B^{\al,\f}_l([-r,r])$ with $\al=q^{-1}$ and for all $1\leq l\leq \f$, $r>0$. 
 
Note that system \eqref{sys:tri} can be written in the form of \eqref{eqn:conlaw} with $U=(u,v)$ and $F(U)=(f(u),g(u)v)$ where we abused notation and opted to use $F$ for the flux of \eqref{eqn:conlaw} and $f$ for the flux of the scalar conservation law in \eqref{sys:tri}. Moreover, observe that
 \begin{equation}
 F(U|\bar{U})=\left(f(u|\bar{u}),g(u|\bar{u})v+g^{\p}(\bar{u})(v-\bar{v})(u-\bar{u})\right).
 \end{equation}
{\color{black} Regarding the growth and coercivity conditions for system \eqref{sys:tri}, we instead work in an $L^\f$ setting. More precisely, we assume that the first components of both solutions (weak and Besov), belong to the class, $\{u:\|u\|_{L^\f(\Dom)}\leq M_1\}$ for some $M_1>0$ which can be ensured by choosing appropriate initial data. Then also $v\in L^\f$ and no coercivity or growth assumptions are required, see Remark \ref{rem:bounded}.}
In verifying the conditions of Theorem \ref{theorem1} we are free to choose any entropy from the family of entropies \eqref{def:tri-ent} and we make the special choice, $\psi(u)=u^{2}$. We take $h(s)=-\la s^{2m+1}$ for $m\in \N$ and $\la>0$, i.e. $g = -\lambda \left(f^\prime\right)^{2m+1}$. 
Then, as $f^{\p}(0)=0$, we infer that
 \begin{align}
 \phi(u)=\int\limits_{0}^{u}\frac{\la(2m+1)f^{\p\p}(s)(f^{\p}(s))^{2m-1}}{\la(f^{\p}(s))^{2m}+1}\,ds=\frac{2m+1}{2m}\log(1+\la(f^{\p}(u))^{2m})
 \end{align}
 and
 \begin{equation*}
 e^{-\phi(u)}=\left(1+\la(f^{\p}(u))^{2m}\right)^{-\frac{2m+1}{2m}}=\left(1+\la\abs{u}^{2mq}\right)^{-\frac{2m+1}{2m}}=:\mathcal{K}(u).
 \end{equation*}
 Subsequently, we may compute that
 \begin{align*}
 \mathcal{K}^{\p}(u)&=-\la\frac{(2m+1)qu\abs{u}^{2mq-2}}{\left(1+\la\abs{u}^{2mq}\right)^{\frac{4m+1}{2m}}},\\
 \mathcal{K}^{\p\p}(u)&=-\la\frac{(2m+1)(2mq-1)q\abs{u}^{2mq-2}}{\left(1+\la\abs{u}^{2mq}\right)^{\frac{4m+1}{2m}}}+\la^{2}\frac{(2m+1)(4m+1)q^2\abs{u}^{4mq-2}}{\left(1+\la\abs{u}^{2mq}\right)^{\frac{6m+1}{2m}}}\\
 &=-\frac{\la(2m+1)q\abs{u}^{2mq-2}\left[(2mq-1)-\la(2mq+q+1)\abs{u}^{2mq}\right]}{\left(1+\la\abs{u}^{2mq}\right)^{\frac{6m+1}{2m}}}.
 \end{align*}
We may thus choose $\la>0$ small enough such that $\mathcal{K}^{\p\p}\leq 0$ for all $\abs{u}\leq M_1$ and $\eta$ is strictly convex.

We proceed to construct the nontrivial solution satisfying \eqref{ineq:general} consisting of a rarefaction wave in the first component and a Lipschitz solution in the second. To this end, consider the scalar conservation law $\pa_t u+\pa_xf(u)=0$ for $(x,t)\in\R\times\R_+$ and, for some $x_0\in\R$, the Riemann data
 \begin{equation}
 u_0(x)=\left\{\begin{array}{rl}
 u_L&\mbox{if }x<x_0,\\
 u_R&\mbox{if }x>x_0.
 \end{array}\right.
 \end{equation}
 Note that, since $f$ is convex, for $u_L<u_R$ we get the following structure of $u$ for all $t>0$
 \begin{equation}\label{tri-sys:soln1}
 u(x,t)=\left\{\begin{array}{cl}
 u_L&\mbox{if }x\leq x_0+f^{\p}(u_L)t,\\
 (f^{\p})^{-1}\left(\frac{x-x_0}{t}\right)&\mbox{if }x_0+f^{\p}(u_L)t<x<x_0+f^{\p}(u_R)t,\\
 u_R&\mbox{if }x\geq x_0+f^{\p}(u_R)t.
 \end{array}\right.
 \end{equation}
Theorem \ref{theorem1} is stated for spatially periodic solutions and we next provide the appropriate periodic modification on $[-r,r]$ for some $r>0$. We first modify $u_0$ to obtain periodic data as
  \begin{equation}\label{data:periodic}
 \bar{u}_0(x)=\left\{\begin{array}{cl}
 u_L&\mbox{if }x<x_0,\\
 u_R&\mbox{if }x_0<x<y_0,\\
 u_R+\frac{x-y_0}{y_1-y_0}(u_L-u_R)&\mbox{if }y_0<x<y_1,\\
 u_L&\mbox{if }y_1<x,
 \end{array}\right.
 \end{equation}
 where $x_0<y_0<y_1$. Then, the corresponding entropy solution $\bar{u}$ to $\pa_tu+\pa_xf(u)=0$ has the following structure
 \begin{equation}\label{tri-sys:soln2}
 \bar{u}(x,t)=\left\{\begin{array}{cl}
 u_L&\mbox{if }x\leq x_0+f^{\p}(u_L)t,\\
 (f^{\p})^{-1}\left(\frac{x-x_0}{t}\right)&\mbox{if }x_0+f^{\p}(u_L)t<x<x_0+f^{\p}(u_R)t,\\
 u_R&\mbox{if } x_0+f^{\p}(u_R)t\leq x\leq y_0+tf^{\p}(u_R),\\
 \Theta(x,t)&\mbox{if } y_0+f^{\p}(u_R)t\leq x\leq y_1+tf^{\p}(u_L),\\
 u_L&\mbox{if }y_1+tf^{\p}(u_L)<x.
 \end{array}\right.
 \end{equation}
 Above $\Theta(x,t)$ is given by
 \begin{equation}
  \Theta(x,t)=u_R+\frac{z-y_0}{y_1-y_0}(u_L-u_R)\mbox{ for }x=z+tf^{\p}\left(u_R+\frac{z-y_0}{y_1-y_0}(u_L-u_R)\right),
 \end{equation}
 for $ y_0+f^{\p}(u_R)t\leq x\leq y_1+tf^{\p}(u_L),t\in[0,T]$. Note that for sufficiently large $y_1-y_0$, the function $x\mapsto\Theta(x,t)$ remains Lipschitz for $y_0+f^{\p}(u_R)t\leq x\leq y_1+tf^{\p}(u_L),t\in[0,T]$. Next, define $B_0$ as \begin{equation}\label{def:C2}
 B_0:=\sup_{t\in[0,T]}\left\{\sup\left\{\frac{\abs{\Theta(x_1,t)-\Theta(x_2,t)}}{\abs{x_2-x_1}}:x_2\neq x_1,\,x_1,\,x_2\in [y_0+f^{\p}(u_R)t,y_1+tf^{\p}(u_L)]\right\}\right\}.
 \end{equation}
 Let us fix a time $t_0>0$ and note that $(x,t)\mapsto h(f^{\p}(u(x,t)))$ is a Lipschitz function for $x\in\R$ and $t\in[t_0,T]$. Now we consider the data 
 \begin{equation}
 U_0(x)=(u(x,t_0),v_0(x))
 \end{equation}
 where $u(\cdot,\cdot)$ is as in \eqref{tri-sys:soln2} and $v_0\in Lip(\R)$ satisfies $v_0(x)=v_C$ when $x\leq x_0$ and $x\geq y_1$ for some constant $v_C$. For the above data the entropy solution of the first equation of \eqref{sys:tri} looks like 
 \begin{equation}
 U_1(x,t)=u(x,t+t_0),
 \end{equation}
 whereas the other component $U_2$ can be solved by the method of characteristics and $U_2$ remains Lipschitz for $t>0$, for instance see \cite{Boris}. Then $U=(U_1,U_2)$ solves the triangular system \eqref{sys:tri}.  We observe that there exists $r>0$ such that $U=(u_L,v_C)$ for all $x\in \R\setminus[x_0-r,y_1+r]$ and $t\in[0,T-t_0]$. Note that, with a suitable change of variables, $U$ can be transformed into a function $\tilde{U}:[-1,1]\times[0,T_1]\rr\R^2$ for some $T_1>0$ such that $\tilde{U}(-1,t)=\tilde{U}(1,t)$ for $t\in[0,T_1]$ and $\tilde{U}$ solves \eqref{sys:tri}. Therefore, without loss of generality, we check condition \eqref{ineq:general} for $U$. We observe that
 %
 \begin{equation*}
 (\pa_x{U_1},\pa_x{U_2})\cdot F((\xi_1,\xi_2)|(\bar{\xi}_1,\bar{\xi}_2))=\pa_x U_1 f(\xi_1|\bar{\xi}_1)+\pa_x U_2\left(g(\xi_1|\bar{\xi}_1)\xi_2+g^{\p}(\bar{\xi}_1)(\xi_2-\bar{\xi}_2)(\xi_1-\bar{\xi}_1)\right).
 \end{equation*}
 Note that $\pa_x U_1=\pa_xu\geq-B_0$ where $B_0$ is as in \eqref{def:C2}. Therefore, for $\xi$, $\bar\xi$ in a compact, we have
 \begin{equation*}
 \pa_x U\cdot F((\xi_1,\xi_2)|(\bar{\xi}_1,\bar{\xi}_2))\geq - C \left(\|U_2\|_{Lip(\R\times[0,T])}+B_0\right)(\abs{\xi_1-\bar{\xi}_1}^2+\abs{\xi_2-\bar{\xi}_2}^2),
  \end{equation*}
 where $C>0$ is a constant depending on the function $g$ and the compact set where $\xi$, $\bar{\xi}$ lie.
 
 
 \subsubsection{General states for the first component}
 \label{sec:triangular1.1}
 
We next wish to find more states for a class of general scalar conservation laws
 \begin{equation}\label{eqn:scalar}
 \pa_t u+\pa_x f(u)=0\mbox{ for }x\in\R\mbox{ and }t>0
 \end{equation}
which are H\"older continuous, satisfy condition \eqref{ineq:general} and can express the first component of the triangular system \eqref{sys:tri}.
 
 \begin{remark}
 Note that for \eqref{eqn:scalar} we work on $\R$ and data such that $u_0(x)=u_L$ for $x<-r$ and $u_0(x)=u_R$ for $x>r$. By a similar argument as in \eqref{data:periodic}, we can construct data $\bar{u}_0$ such that $\bar{u}_0(x)=u_L$ for $x>r_1>r$ and $\bar{u}_0=u_0$ on $[-r,r]$. From the previous observations, we know that the entropy solution $\bar{u}$ to \eqref{eqn:scalar} corresponding to data $\bar{u}_0$ is Lipschitz on $(\R\setminus[-r,r])\times[0,T]$. Therefore, it is with no loss in generality to work on $\R$ since the modification to a periodic solution preserves the H\"older regularity and the one-sided bound condition \eqref{ineq:general}.
 \end{remark}

 \begin{proposition}\label{prop:scalar}
 	For $T>0$, let $u\in C([0,T],L^1_{loc}(\R))\cap L^{\f}(\R\times[0,\f))$ be an entropy solution to \eqref{eqn:scalar} for a strictly convex flux $f\in C^2(\R)$. Suppose further that $u(\cdot,T)$ satisfies the regularity assumption: 
 	\begin{equation}\label{condition1}
 	f^{\p}(u(\cdot,T))\in C^{0,\B}(\R)\mbox{ for some }\B\in(0,1). 
 	\end{equation}
 	Then, for $t\in(0,T]$, $f^{\p}(u(\cdot,t))$ is H\"older continuous with 
 	\begin{equation*}
 	\abs{f^{\p}(u(\cdot,t))}_{C^{0,\B}([-M,M])}\leq \max\{\abs{f^{\p}(u(\cdot,T))}_{C^{0,\B}(\R)},(2M)^{1-\B}t^{-1}\}\mbox{ for }M>0.
 	\end{equation*} 
 	Moreover, if there exists a constant $B_1$ such that 
 	\begin{equation}\label{condition2}
 	\left(u(x-\De x,T)-u(x,T)\right)_+\leq B_1\De x\mbox{ for }x\in\R,\,\De x>0,
 	\end{equation}
 	then $u$ satisfies \eqref{ineq:general} with $b(t)=B_1$, that is,
 	\begin{equation*}
 	\pa_x u(\cdot,t)\geq -B_1\mbox{ in }\mathcal{D}^{\p}(\R)\mbox{ for all }t\in(0,T].
 	\end{equation*}

 \end{proposition}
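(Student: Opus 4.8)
The plan is to resolve the scalar conservation law \emph{backwards} in time along characteristics, transporting the regularity available at time $T$ down to times $t<T$; this is the mechanism behind the backward algorithm of \cite{AGV-exact}. Since $f\in C^2$ is strictly convex, $(f^{\p})^{-1}$ is $C^1$ on compact sets, so \eqref{condition1} already forces $u(\cdot,T)=(f^{\p})^{-1}\big(f^{\p}(u(\cdot,T))\big)$ to be continuous on $\R$. Hence, by the theory of generalised characteristics for convex scalar laws (see \cite{Daf_book}, or equivalently the Lax--Oleinik formula applied to the solution on the strip $\R\times[t,T]$), through every point $(\bar x,T)$ there passes a \emph{unique} backward characteristic, a straight segment of speed $f^{\p}(u(\bar x,T))$ along which $u$ is constant at its continuity points. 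I would then fix $t\in(0,T)$ and introduce
\[
\psi(\bar x):=\bar x-(T-t)f^{\p}(u(\bar x,T)),
\]
the position at time $t$ of that characteristic. The Oleinik one-sided estimate $f^{\p}(u(\bar x_2,T))-f^{\p}(u(\bar x_1,T))\le (\bar x_2-\bar x_1)/T$ for $\bar x_1<\bar x_2$, inserted into the increment of $\psi$, yields $\psi(\bar x_2)-\psi(\bar x_1)\ge (t/T)(\bar x_2-\bar x_1)>0$; since $f^{\p}(u(\cdot,T))$ is bounded, $\psi$ is a strictly increasing continuous bijection of $\R$. Consequently, for a.e.\ $x$ the point $(x,t)$ is a continuity point of $u$ and lies on the backward characteristic issued from $(\bar x,T)$ with $\bar x:=\psi^{-1}(x)$, so that $u(x,t)=u(\bar x,T)$ and $f^{\p}(u(x,t))=(\bar x-x)/(T-t)$.

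Granting this dictionary, the H\"older estimate follows by a two-case split. Fix such ``good'' points $x_1<x_2$ in $[-M,M]$, $\bar x_i:=\psi^{-1}(x_i)$ (so $\bar x_1<\bar x_2$), and note $\bar x_2-\bar x_1=(x_2-x_1)+(T-t)\big(f^{\p}(u(\bar x_2,T))-f^{\p}(u(\bar x_1,T))\big)$. If $u(x_1,t)\le u(x_2,t)$, the Oleinik estimate at time $t$ gives $0\le f^{\p}(u(x_2,t))-f^{\p}(u(x_1,t))\le (x_2-x_1)/t\le (2M)^{1-\B}t^{-1}|x_2-x_1|^{\B}$, using $x_2-x_1\le 2M$. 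If $u(x_1,t)>u(x_2,t)$, then $f^{\p}(u(\bar x_2,T))<f^{\p}(u(\bar x_1,T))$, so the identity above forces $0<\bar x_2-\bar x_1\le x_2-x_1$, and the dictionary together with \eqref{condition1} gives
\[
|f^{\p}(u(x_1,t))-f^{\p}(u(x_2,t))|=|f^{\p}(u(\bar x_1,T))-f^{\p}(u(\bar x_2,T))|\le |f^{\p}(u(\cdot,T))|_{C^{0,\B}(\R)}\,|x_1-x_2|^{\B}.
\]
Both bounds are dominated by $\max\{|f^{\p}(u(\cdot,T))|_{C^{0,\B}(\R)},(2M)^{1-\B}t^{-1}\}\,|x_1-x_2|^{\B}$; since $f^{\p}(u(\cdot,t))\in L^{\f}$, a bound valid on a full-measure set of pairs produces a representative in $C^{0,\B}([-M,M])$ with the stated seminorm, the endpoint $t=T$ being the hypothesis.

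For the one-sided bound I would argue identically. Condition \eqref{condition2} reads $u(\bar x_2,T)-u(\bar x_1,T)\ge -B_1(\bar x_2-\bar x_1)$ for $\bar x_1<\bar x_2$. Take good points $x_1<x_2$, $\bar x_i:=\psi^{-1}(x_i)$. If $u(x_2,t)\ge u(x_1,t)$ there is nothing to prove; otherwise $f^{\p}(u(\bar x_2,T))\le f^{\p}(u(\bar x_1,T))$ by monotonicity of $f^{\p}$, whence $0<\bar x_2-\bar x_1\le x_2-x_1$ exactly as above, and
\[
u(x_2,t)-u(x_1,t)=u(\bar x_2,T)-u(\bar x_1,T)\ge -B_1(\bar x_2-\bar x_1)\ge -B_1(x_2-x_1).
\]
Thus $x\mapsto u(x,t)+B_1x$ coincides a.e.\ with a nondecreasing function, i.e.\ $\pa_x u(\cdot,t)\ge -B_1$ in $\mathcal{D}^{\p}(\R)$, which is \eqref{ineq:general} with $b(t)=B_1$.

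The main obstacle is the rigorous justification of the ``dictionary'': that $\psi$ is genuinely a bijection and that for a.e.\ $x$ the value $u(x,t)$ is carried without jumps along the unique genuine backward characteristic terminating at $(\psi^{-1}(x),T)$. This rests on the continuity of $u(\cdot,T)$ (ensured by \eqref{condition1}), the $BV$ (indeed $SBV$) regularising effect of convex scalar conservation laws, which makes the exceptional set of $x$ at time $t$ null, and the fact that $\psi$ and $\psi^{-1}$ enjoy Lipschitz-type bounds, so that null sets are preserved under the correspondence; once these structural facts are in place, the estimates above are elementary.
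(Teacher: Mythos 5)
Your proposal is correct and follows essentially the same route as the paper: both exploit that continuity of $u(\cdot,T)$ (forced by \eqref{condition1}) yields a unique genuine backward characteristic from each $(\bar x,T)$, set up the strictly increasing bijection $\bar x\mapsto\xi(\bar x,T;t)$, and then split into the same two cases according to whether $u(\bar x_2,T)\gtrless u(\bar x_1,T)$, controlling one case by the Hölder bound at time $T$ and the other via the spreading of rarefaction fans. The only cosmetic difference is that you invoke Oleinik's one-sided estimate directly (at times $T$ and $t$), whereas the paper rederives the needed monotonicity and fan structure from Dafermos's theory of generalized characteristics; the estimates and the passage to the distributional one-sided bound coincide.
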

 
 \begin{remark}\label{remark:scalar}
 	Let $h(\cdot)\in L^\f([-M,M])$ be a function such that the map $x\mapsto x-Tf^{\p}(h(x))$ is non-decreasing and right-continuous for $T>0$. Due to \cite{AGV-exact} we know that $h(x)$ is a reachable state from initial data for \eqref{eqn:scalar}.  By the backward algorithm \cite{AGV-exact}, we can show that $u(\cdot,T)$ satisfying \eqref{condition1} or \eqref{condition2} is achievable provided the map $x\mapsto x-Tf^{\p}(u(x,T))$ is non-decreasing. 
 \end{remark}
\begin{remark}
	Suppose we take $f(u)=(q+1)^{-1}\abs{u}^{q+1}$ and the map $x\mapsto f^{\p}(u(x,T))$ is $C^{0,\B}(\R)$ with $\B q^{-1}>1/2$. If $x\mapsto h(f^{\p}(u(x,T)))$ is Lipschitz then we can construct a solution, $U$ to \eqref{sys:tri} with data $U_0(x)=(u(x,t_0),v_0(x))$ with $t_0\in(0,T)$ where $u$ is as in Remark \ref{remark:scalar} and $v_0\in C^1(\R)$. By a similar argument we can then show that $U$ satisfies all the hypothesis of Theorem \ref{theorem1}. 
\end{remark}

\begin{proof}[Proof of Proposition \ref{prop:scalar}]
	 Since $f^{\p}$ is a strictly increasing function and $x\mapsto f^{\p}(u(x,T))$ is H\"older continuous, we have that $x\mapsto u(x,T)$ is continuous. 
	 Therefore, the maximal and the minimal backward characteristics coincide, see \cite{AGV-structure,Daf:Charac}, and thus, at each $(x,T)$ with $x\in\R$, there is only one genuine backward characteristic, say $\xi(x,T;t)$. Then, for $x\in\R$, $t\in(0,T]$ we find that
	\begin{align*}
	x&=\xi(x,T;t)+(T-t)f^{\p}(u(\xi(x,T;t),t))\\
	&=\xi(x,T;t)+(T-t)f^{\p}(u(x,T)).
	\end{align*}
	Let $C_0=\abs{f^{\p}(u(\cdot,T))}_{C^{0,\B}(\R)}$ and observe that, for $\abs{x}\leq M$, the map $x\mapsto \xi(x,T;t)$ is H\"older continuous for $t\in(0,T]$ as
	\begin{align*}
	\abs{\xi(x_1,T;t)-\xi(x_2,T;t)}&=\abs{x_1-f^{\p}(u(x_1,T))(T-t)-x_2+f^{\p}(u(x_2,T))(T-t)}\\
	&\leq \abs{x_1-x_2}+ T\abs{f^{\p}(u(x_1,T))-f^{\p}(u(x_2,T))}\\
	&\leq ((2M)^{1-\B}+C_0T)\abs{x_1-x_2}^\B.
	\end{align*}
	Hence, fixing $x_1<x_2$, the following two cases arise.
	\begin{enumerate}
		\item $\xi(x_2,T;0)-\xi(x_1,T;0)\geq x_2-x_1$: Then, we compute 
		\begin{align*}
		\abs{f^{\p}(u(\xi(x_1,T;t),t))-f^{\p}(u(\xi(x_2,T;t),t))}&=\abs{f^{\p}(u(x_1,T))-f^{\p}(u(x_2,T))}\\
		&\leq C_0\abs{x_1-x_2}^{\B}\\
		&\leq C_0\abs{\xi(x_1,T,t)-\xi(x_2,T;t)}^{\B}.
		\end{align*}
		
		\item $\xi(x_2,T,0)-\xi(x_1,T,0)< x_2-x_1$: Note that two backward characteristics (which are genuine in our case) cannot meet at time $t>0$. Hence, there exist $\xi_0\in\R$ and $\de\geq0$ such that
		\begin{equation*}
		\xi(x_1,T;t)=\xi_0+f^{\p}(u(x_1,T))(t+\de)\mbox{ and }\xi(x_2,T;t)=\xi_0+f^{\p}(u(x_2,T))(t+\de)
		\end{equation*}
		for all $t\in[0,T]$. Therefore, for $t\in(0,T]$, we have
		\begin{align*}
		\abs{f^{\p}(u(\xi(x_1,T;t),t))-f^{\p}(u(\xi(x_2,T;t),t))}&=\frac{\abs{\xi(x_1,T;t)-\xi(x_2,T;t)}}{t+\de}\\
		&\leq \frac{1}{t}\abs{\xi(x_1,T;t)-\xi(x_2,T;t)}.
		\end{align*}
	\end{enumerate}
	
	Note that for a fixed $t\in(0,T]$, the map $x\mapsto \xi(x,T;t)$ is continuous and strictly increasing, and hence a bijection between $\R\times\{T\}$ and $\R\times\{t\}$. Therefore, for $z_1,z_2\in \R$ there exist unique $x_1,x_2$ such that $\xi(x_1,T;t)=z_1$ and $\xi(x_2,T;t)=z_2$. By the previous observation, we thus have
	\begin{equation*}
	\abs{f^{\p}(u(z_1,t))-f^{\p}(u(z_2,t))}\leq \max\left\{C_0,\frac{(2M)^{1-\B}}{t}\right\}\abs{z_1-z_2}^{\B}\mbox{ for }z_1,z_2\in[-M,M],
	\end{equation*}
	which proves the H\"older regularity. Next, suppose that $u(x,T)$ satisfies the following
	\begin{equation*}
	\left(u(x-\De x,T)-u(x,T)\right)_+\leq B_1\De x\mbox{ for any }\De x>0.
	\end{equation*}
	Fix a $t\in(0,T]$. Suppose $u(z_1,t)>u(z_2,t)$ for some $z_1<z_2$. From the previous observation, there exist unique $x_1,x_2 $ such that $z_1=\xi(x_1,T;t)$ and $z_2=\xi(x_2,T;t)$. Subsequently, we have $u(z_1,t)=u(x_1,T)$ and $u(z_2,t)=u(x_2,T)$. From the increasing property of the map $x\mapsto \xi(x,T;t)$ we get $x_1<x_2$. Hence, $u(x_1,T)>u(x_2,T)$. Since $u\mapsto f^{\p}(u)$ is strictly increasing we have $f^{\p}(u(x_1,T))>f^{\p}(u(x_2,T))$ whereas we know that
	\begin{align*}
	\xi(x_2,T;t)-\xi(x_1,T;t)&=x_2-x_1-(T-t)f^{\p}(u(x_2,T))+(T-t)f^{\p}(u(x_1,T))\\
	&>x_2-x_1.
	\end{align*}
	Then we also find that
	\begin{align*}
	0\leq u(\xi(x_1,T;t),t)-u(\xi(x_2,T;t),t)
	&=u(x_1,T)-u(x_2,T)\\
	&\leq B_1(x_2-x_1)\\
	&\leq B_1( \xi(x_2,T;t)-\xi(x_1,T;t)).
	\end{align*}
	Therefore, $(u(z_1,t)-u(z_2,t))_+\leq B_1(z_2-z_1)$. Finally, for $\De x>0$, we infer that
	\begin{equation*}
	\frac{u(x+\De x,t)-u(x,t)}{\De x}\geq \left\{
	\begin{array}{rl}
	0,&\mbox{ if }u(x+\De x,t)>u(x,t),\\
	-B_1,&\mbox{ if }u(x+\De x,t)<u(x,t).
	\end{array}\right.
	\end{equation*}
	Now let $\varphi\in C_c^1(\R) $ such that $\varphi\geq0$. By a change of variables we find that
	\begin{equation}\label{cal1:scalar}
	-\int\limits_{\R}u(x,t)\frac{\varphi(x+\De x)-\varphi(x)}{\De x}\,dx=\int\limits_{\R}\frac{u(x,t)-u(x-\De x)}{\De x}\varphi(x)\,dx\geq -B_1\int\limits_{\R}\varphi(x)\,dx.
	\end{equation}
	Since $\varphi\in C^1_c(\R)$ we have 
	\begin{equation*}
	\int\limits_{\R}\abs{u(\cdot,t)}\frac{\abs{\varphi(x+\De x)-\varphi(x)}}{\De x}\,dx\leq \|u\|_{L^{\f}(\R\times[0,\f))}\|\varphi^{\p}\|_{L^{\f}(\R)}\mathcal{L}^1(supp(\varphi))
	\end{equation*}
	where $\mathcal{L}^1$ denotes the one-dimensional Lebesgue measure. Then, by dominated convergence, we may pass to the limit in \eqref{cal1:scalar} as $\De x\rr0$ to deduce
	\begin{equation*}
	-\int\limits_{\R}u(x,t)\varphi^{\p}(x)\,dx\geq -B_1\int\limits_{\R}\varphi(x)\,dx,
	\end{equation*}
	that is, $\pa_x u(\cdot,t)\geq -B_1$ in $\mathcal{D}^{\p}(\R)$ for all $t\in(0,T]$. 
\end{proof}


 \subsection{Multi-D planar extensions}\label{sec:triangular2}
 
 In this section, we wish to give a sufficient condition for system \eqref{eqn:conlawgen} to admit a planar extension of one-dimensional solutions. In particular, for these systems it will be enough to study solutions in 1-D and then extend them to multi-D by the procedure described below. We also verify that the isentropic Euler system, the equations of shallow water magnetohydrodynamics, as well as the triangular system satisfy the condition for planar extension. This way we can extend the class of states obtained for the triangular system to the multi-dimensional setting. For the study of multi-dimensional planar waves for the isentropic Euler system \eqref{isentropic_euler} we refer to \cite{ChFrLi,FeiKre}. 
 
 Let us consider a hyperbolic system in the following form:
 \begin{equation}
 \begin{array}{rl}
 \pa_t {A}_1(w)+\pa_1F^{A_1}_1(w)+\pa_{k}F^{A_1}_k(w,z)&=0,\\
 \pa_t {A}_2(w,z)+\pa_1F^{A_2}_1(w,z)+\pa_{k}F^{A_2}_k(w,z)&=0,
 \end{array}\mbox{ for }x=(x_1,\cdots,x_d)\in\Dom,\,t>0\label{sp:sys}
 \end{equation}
 where $F^{A_1}_1\in C^2(\R^n,\R^n),F^{A_1}_k\in C^2(\R^{n\times m},\R^n)$ for $2\leq k\leq d$ and $F^{A_2}_j\in C^2(\R^{n+m},\R^m)$ for $1\leq j\leq d$. Let $(G_1,H_1),(G_2,H_2)$ be determined by \eqref{eq:entropy-entropy_flux gen} corresponding to $A_1$ and $A_2$ respectively and impose the condition 
 \begin{equation}\label{condition:planar}
 F^{A_2}_1(w,0)={A}_2(w,0)=G_2(w,0)=0
 \end{equation} 
 which allows for the planar extension to multi-D. Indeed, suppose that $\bar{w}$ is a weak solution to $ \pa_t {A}_1(w)+\pa_1F^{A_1}_1(w)=0$ on $\Dom^1\times[0,T]$ with initial data $\bar{w}_0$ where $\Dom^1=[0,1]$ is the 1-D flat torus. Then we define multi-D initial data as
 \begin{align}\label{sp:data}
 w_0(x_1,\tilde{x}_2)=\bar{w}_0(x_1)\mbox{ and }z_0(x_1,\tilde{x}_2)=0\mbox{ for }x_1\in\Dom^1,\,\tilde{x}_2=(x_2,\cdots,x_d)\in\Dom^{d-1},
 \end{align}
 where $\Dom^{d-1}=[0,1]^{d-1}$ is the $(d-1)$-dimensional torus. Now we claim that 
 \[
 w(x_1,\tilde{x}_2)=\bar{w}(x_1),z(x_1,\tilde{x}_2)=0
 \] 
 is a weak solution to \eqref{sp:sys} with initial data $(w_0,z_0)$ as in \eqref{sp:data}. Since $F^{A_2}_1(w,0)={A}_2(w,0)=0$ and $w$ is independent of the $\tilde{x}_2$ variable, we find that
 \begin{equation}
 \pa_{k}F^{A_1}_k(w,z)=\pa_1F^{A_2}_1(w,z)=\pa_{k}F^{A_2}_k(w,z)=0\mbox{ for }2\leq k\leq d.
 \end{equation} 
 Hence, $(w,z)$ is a weak solution to system \eqref{sp:sys}. 
  
 We next claim that if the 1-D solution satisfies \eqref{ineq:general}, then also the multi-D extension satisfies the respective one-sided condition. Indeed, for the 1-D system $ \pa_t {A}_1(w)+\pa_1F^{A_1}_1(w)=0$ condition \eqref{ineq:general} becomes 
 \begin{equation}\label{ineq:gen-1D}
 \pa_1G_1(w)\cdot F_1^{A_1}(\xi_w|\bar{\xi}_w)+b_1(t)H_1(\xi_w|\bar{\xi}_w)\geq0.
 \end{equation}
 Suppose $\bar{w}$ satisfies \eqref{ineq:gen-1D}. Since $(w,z)$ is independent of $x_k$ for $2\leq k\leq d$, we have $\pa_k G_1(w,z)\cdot F_k^{A_1}(\xi|\bar{\xi})=0$ and $\pa_k G_2(w,z)\cdot F_k^{A_2}(\xi|\bar{\xi})=0$ for $2\leq k\leq d$ where $\xi=(\xi_w,\xi_z)$. We also observe that $\pa_1G_2(w,0)\cdot F_1^{A_2}(\xi|\bar{\xi})=0$. Therefore, $(w,z)$ also satisfies \eqref{ineq:general}.
 
 We note that all systems considered in the previous examples, apart from elasticity, can be written in the form \eqref{sp:sys}. For the isentropic Euler system \eqref{isentropic_euler} and $v=(v_1,\cdots,v_d)^T$, we set $w=(\rho,v_1)$, $z=(v_2,\cdots,v_d)$ and choose
 \begin{equation}
 {A}_1(\rho,v_1)=\begin{pmatrix}
 \rho\\
 \rho v_1
 \end{pmatrix}\mbox{ and }{A}_2(\rho,v)=\begin{pmatrix}
 \rho v_2\\
 \vdots\\
 \rho v_d
 \end{pmatrix}.
 \end{equation}
Then we can choose fluxes $\{F^{A_1}_j,F^{A_2}_j;1\leq j\leq d\}$ as
\begin{align}
&F^{A_1}_1(\rho,v)=\begin{pmatrix}
\rho v_1\\
\rho  v_1^2+p(\rho)
\end{pmatrix},\,
F^{A_1}_k(\rho,v)=\begin{pmatrix}
\rho v_k\\
\rho  v_1v_k
\end{pmatrix},\\
&F^{A_2}_1(\rho,v)=\begin{pmatrix}
	\rho v_2v_1\\
	\vdots\\	
	\rho  v_dv_1
\end{pmatrix},\,
F^{A_2}_k(\rho,v)=\begin{pmatrix}
	\rho v_2v_k\\
	\vdots\\
	\rho  v_dv_k
\end{pmatrix}+p(\rho)\begin{pmatrix}
 \de_{k2}\\
\vdots\\
\de_{kd}
\end{pmatrix}\mbox{ for }2\leq k\leq d,
\end{align} 
where $\de_{ij}$ is the Kr\"onecker delta. Note that \eqref{condition:planar} is satisfied.

Moreover, we observe that the system of shallow water magnetohydrodynamics can be represented in the form of \eqref{sp:sys} with $w=(h,v_1,b_1)$ and $z=(v_2,\cdots,v_d,b_2,\cdots,b_d)$. Now the choice for ${A}_1,{A}_2$ is the following
\begin{equation}
{A}_1(h,v_1,b_1)=\begin{pmatrix}
h\\
hv_1\\
hb_1
\end{pmatrix}\mbox{ and }{A}_2(h,v,b)=\begin{pmatrix}
h\tilde{v}_2\\
h\tilde{b}_2
\end{pmatrix}\mbox{ where }\tilde{v}_2=(v_2,\cdots,v_d)^T,\tilde{b}_2=(b_2,\cdots,b_d)^T.
\end{equation}
Fluxes $\{F^{A_1}_j,F^{A_1}_j;1\leq j\leq d\}$ can be chosen as follows
\begin{align*}
&F^{A_1}_1(h,v_1)=\begin{pmatrix}
hv_1\\
hv_1^2-hb_1^2+gh^{2}/2\\
hb_1v_1 - hv_1b_1   
\end{pmatrix},\,
F^{A_1}_k(h,v,b)=\begin{pmatrix}
hv_k\\
hv_1v_k-hb_1b_k\\
hb_1v_k - hv_1b_k   
\end{pmatrix},\\
&F^{A_2}_1(h,v,b)=\begin{pmatrix}
h \tilde{v}_2 v_1-h\tilde{b}_2b_1 \\
h\tilde{b}_2v_1- h\tilde{v}_2b_1   
\end{pmatrix},\,
F^{A_2}_k(h,v,b)=\begin{pmatrix}
h \tilde{v}_2 v_k-h\tilde{b}_2b_k + (g h^2/2) e_k\\
h\tilde{b}_2v_k - h\tilde{v}_2b_k 
\end{pmatrix}\mbox{ for }2\leq k\leq d.
\end{align*} 
Note that \eqref{condition:planar} is also satisfied in this case.


{\color{black}Lastly, to extend the triangular system in multi-D we can take $w=(u,v)$ and $A_1(w)=(u,v)$ with $F^{A_1}_1(u,v)=(f(u),g(u)v)$. Then we may consider $F_k^{A_1}\in C^2(\R^{2\times m},\R^2)$ for $2\leq k\leq d$ and $F_j^{A_2}\in C^2(\R^{2\times m},\R^2)$ for $1\leq j\leq d$ such that it satisfies \eqref{condition:planar} and the system admits a convex entropy.} For example, one may view system \eqref{sys:tri}  as a 1-D restriction of the following multi-dimensional triangular system:
	\begin{align}
	\pa_t u+\sum\limits_{i=1}^{d}\pa_if(u)&=0,\label{tri-sys-multiD1}\\
	\pa_t v_k+\sum\limits_{i=1}^{d}\pa_i(g(u)v_k)&=0,\mbox{ with }1\leq k\leq m,\label{tri-sys-multiD2}
	\end{align}
	for $(x,t)\in\R^d\times\R_+$. Note that system \eqref{tri-sys-multiD1}--\eqref{tri-sys-multiD2} inherits an entropy-entropy flux pair $(\eta,q)$ defined as follows,
	\begin{align*}
	\eta(u,v)&=\psi(u)+e^{-\phi(u)}K\left(ve^{\phi(u)}\right),\\
	q_i(u,v)&=P(u)+g(u)e^{-\phi(u)}K\left(ve^{\phi(u)}\right)\mbox{ where }P(u)=\int\limits_{0}^{u}\psi^{\p}(\si)f^{\p}(\si)\,d\si
	\end{align*}
	where $K:\R^m\rr\R,\psi:\R\rr\R$ are $C^2$ strictly convex functions and $\phi$ is the primitive of $\frac{g^{\p}(u)}{g(u)-f^{\p}(u)}$. We also assume that $u\mapsto e^{-\phi(u)}$ is concave. Note that
	\begin{equation*}
	D_u\eta(u,v)=\psi^{\p}(u)-e^{-\phi(u)}K(ve^{\phi(u)})\phi^{\p}(u)+DK\cdot v\phi^{\p}(u)\mbox{ and }D_v\eta(u,v)=DK(ve^{\phi(u)}).
	\end{equation*}
	Further, we have
	\begin{align*}
	D_{uu}\eta&=\psi^{\p\p}(u)+e^{-\phi(u)}(\phi^{\p}(u)^2-\phi^{\p\p}(u))K+(-\phi^{\p}(u)^2+\phi^{\p\p}(u))DK\cdot v+e^{\phi(u)}\phi^{\p}(u)^2v^TD^2Kv,\\
	D_{uv}\eta&=e^{\phi(u)}\phi^{\p}(u)D^2Kv\mbox{ and }D_{vv}\eta=e^{\phi(u)}D^2K.
	\end{align*}
	We wish to show that $D^2\eta$ is positive-definite. Since $D^2K$ is positive-definite, by Sylvester's criterion, it is enough to check that $\det(D^2\eta)>0$. Note that
	\begin{equation*}
	\det(D^2\eta)=\det\left(D_{uu}\eta D_{vv}\eta-D_{uv}\eta\otimes D_{uv}\eta\right).
	\end{equation*} 
	Let $\xi\in \R^{m}$ be any vector. We check that $\xi^T(D_{uu}\eta D_{vv}\eta-D_{uv}\eta\otimes D_{uv}\eta)\xi>0$ which proves that $\det(D^2\eta)>0$ and we can conclude that $D^2\eta$ is positive-definite. We compute that
	\begin{align*}
	\xi^T(D_{uu}D_{vv}-D_{uv}\otimes D_{uv})\xi&= e^{\phi(u)}\left(D_{uu}\eta\right)\xi^TD^2K\xi-\abs{\xi^TD^2Kv}^2e^{2\phi(u)}\phi^{\p}(u)^2\\
	&=e^{\phi(u)}\left[\psi^{\p\p}(u)+(\phi^{\p}(u)^2-\phi^{\p\p}(u))(Ke^{-\phi(u)}-DK\cdot v)\right]\xi^TD^2K\xi\\
	&+e^{2\phi(u)}\phi^{\p}(u)^2\left[\left(v^TD^2Kv\right)\left(\xi^TD^2K\xi\right)-\abs{\xi^TD^2Kv}^2\right].
	\end{align*}
	By the Cauchy-Schwartz inequality for the inner product induced by the symmetric, positive-definite matrix $D^2K$, we find that $v^TD^2Kv\xi^TD^2K\xi-\abs{\xi^TD^2Kv}^2\geq0$. Also, since $u\mapsto e^{-\phi(u)}$ is concave we get $\phi^{\p}(u)^2-\phi^{\p\p}(u)\leq0$ and since $K$ is convex with $K(0)=0$ it holds that
	\[
- K\left(ve^{\phi(u)}\right) + DK(ve^{\phi(u)})\cdot ve^{\phi(u)}\geq 0,
	\]
i.e. $K\left(ve^{\phi(u)}\right)e^{-\phi(u)}-DK(ve^{\phi(u)})\cdot v\leq 0$. Then, as $\psi,K$ are strictly convex, we indeed infer that $\det(D^2\eta)>0$ and $D^2\eta$ is positive-definite.
	


%
	
\section*{Acknowledgements} SSG and AJ acknowledge the support of the Department of Atomic Energy, Government of India, under project no. 12-R\&D-TFR-5.01-0520. SSG would also like to thank Inspire faculty-research grant DST/INSPIRE/04/2016/000237.

	\section*{References}


\begin{thebibliography}{99}
	
			\bibitem{AGV-exact} 
			\newblock Adimurthi, S. S. Ghoshal and  G. D. Veerappa Gowda,
			\newblock Exact controllability of scalar conservation laws with strict convex  flux.
			\newblock {\em Math. Control Relat. Fields}, 4, (4), 401--449, 2014.
			
	\bibitem{AGV-structure}
	\newblock Adimurthi,  S. S. Ghoshal and G. D. Veerappa Gowda, 
	\newblock Structure of entropy solutions to scalar conservation laws with strictly convex flux.
	\newblock {\em  J. Hyperbol. Differ. Eq.}, 4, 571--611, 2012.
	
				
	
			
		\bibitem{Boris} 
		\newblock B. Andreianov, C. Donadello, S. S. Ghoshal and U. Razafison,
		\newblock On the attainability set for triangular type system of conservation laws with initial data control.
		\newblock {\em J. Evol. Eq.}, 15, 3, 503--532, 2015.
		
	\bibitem{Baiti}
	\newblock P. Baiti and H. K. Jenssen,
	\newblock Blowup in $L^{\f}$ for a class of genuinely nonlinear hyperbolic systems of conservation laws.
	\newblock {\em Discrete Contin. Dynam. Systems}, 7, 4, 837--853, 2001.

	
\bibitem{Ball76}
\newblock J.M. Ball, 
\newblock Convexity conditions and existence theorems in nonlinear elasticity.
\newblock {\em Arch. Ration. Mech. Anal.}, 63, 4, 337--403, 1976.

\bibitem{Ball_open_problems}
\newblock J.M. Ball, 
\newblock Some open problems in elasticity.
\newblock {\em In Geometry, mechanics, and dynamics}, 3--59, 2002.
		
		\bibitem{BGSTW}
		\newblock C. Bardos, P. Gwiazda, A. \'Swierczewska-Gwiazda, E. S. Titi and E. Wiedemann,
		\newblock On the extension of Onsager's conjecture for general conservation laws.
		\newblock {\em J. Nonlinear Sci.}, 29, 2, 501--510, 2019.
		
\bibitem{BB}
\newblock S. Bianchini and A. Bressan,
\newblock Vanishing viscosity solutions of nonlinear hyperbolic systems.
\newblock{\em Ann. of Math. (2)} 161, no. 1, 223--342, 2005.
		
		
		\bibitem{BDS}
		\newblock Y. Brenier, C. De Lellis and L. Sz\'ekelyhidi Jr.,
		\newblock Weak-strong uniqueness for measure-valued solutions.
		\newblock{\em Comm. Math. Phys.}, 305, 2, 351--361, 2011.
		
\bibitem{Bressan-book}
\newblock A. Bressan,
\newblock Hyperbolic systems of conservation laws. The one-dimensional Cauchy problem.
\newblock{\em Oxford Lecture Series in Mathematics and its Applications, 20. Oxford University Press, Oxford}, 2000. xii+250 pp.
		

         \bibitem{westdickenberg}
	\newblock F. Cavalletti, M. Sedjro and M. Westdickenberg,
	\newblock A variational time discretization for compressible Euler equations.
	\newblock{\em Trans. Amer. Math. Soc.}, 371 (7), 5083--5155, 2019.

		\bibitem{ChHs}
		\newblock T. Chang and L. Hsiao,
		\newblock The Riemann Problem and Interaction of Waves in Gas Dynamics.
		\newblock Pitman Monographs and Surveys in Pure and Applied Mathematics, Vol. 41 (Longman Scientific \& Technical, Harlow, 1989).
		
		
		\bibitem{CF01}
		\newblock G.-Q. Chen and H. Frid,
		\newblock Uniqueness and asymptotic stability of Riemann solutions for the  compressible Euler equations.
		\newblock{\em Trans. Amer. Math. Soc.}, { 353} (3), 1103--1117 (electronic), 2001.
		
		\bibitem{ChFrLi}
		\newblock G.-Q. Chen, H. Frid and Y. Li,
		\newblock Uniqueness and stability of Riemann solutions with large oscillation in gas dynamics.
		\newblock{\em Comm. Math. Phys.}, { 228} (2), 201--217, 2002.
		
		\bibitem{CDK}
		\newblock E. Chiodaroli, C. De Lellis and O. Kreml,
		\newblock Global ill-posedness of the isentropic system of gas dynamics.
		\newblock{\em Comm. Pure Appl. Math.}, 68, 7, 1157--1190, 2015.
		
		
		\bibitem{Chiodaroli}
		\newblock	 E. Chiodaroli, O. Kreml, V. M{\'a}cha and S. Schwarzacher,
	\newblock Non-uniqueness of admissible weak solutions to the compressible {E}uler equations with smooth initial data.
	\newblock {arXiv preprint arXiv:1812.09917}, 2018.
	
	\bibitem{tzavaras_cleopatra}
	\newblock	C. Christoforou and A.E. Tzavaras,
	\newblock Relative entropy for hyperbolic--parabolic systems and application to the constitutive theory of thermoviscoelasticity.
	\newblock {\em Arch. Ration. Mech. Anal.}, { 229} (1), 1--52, 2018.

		
	\bibitem{CET}
	\newblock	P. Constantin, W. E and E. S. Titi,
	\newblock Onsager's conjecture on the energy conservation for solutions of {E}uler's equation.
	\newblock {\em Comm. Math. Phys.}, { 165} (1), 207--209, 1994.
	
	\bibitem{Dacorogna}
	\newblock	B. Dacorogna,
	\newblock Direct methods in the calculus of variations.
	\newblock Springer Science \& Business Media, vol. 78, 2007.
	
	\bibitem{Daf:Charac}
	\newblock C. M. Dafermos,
	\newblock Generalized characteristics and the structure of solutions of hyperbolic conservation laws.
	\newblock{\em Indiana Univ. Math. J.} 26, 6, 1097--1119, 1977.
	
	
	\bibitem{Daf79}
	\newblock C. M. Dafermos,
	\newblock The second law of thermodynamics and stability.
	\newblock{\em Arch. Ration. Mech. Anal.}, 70, 2, 167--179, 1979.
	
	\bibitem{Daf86}
	\newblock C. M. Dafermos,
	\newblock Quasilinear hyperbolic systems with involutions.
	\newblock{\em Arch. Ration. Mech. Anal.}, 94, 4, 373--389, 1986.
	
	\bibitem{Daf_book}
	\newblock C. M. Dafermos,
	\newblock Hyperbolic conservation laws in continuum physics. Second edition,
	\newblock Grundlehren der Mathematischen Wissenschaften [Fundamental Principles of Mathematical Sciences], 325, {\em Springer-Verlag, Berlin}, 2005, xx+626 pp.
	
	\bibitem{Deb}
	\newblock T. D{e}biec,
	\newblock On entropy conservation for general systems of conservation laws.
	\newblock {preprint}, arXiv:1910.05793, 2019.
	
	
	\bibitem{DeLS}
	\newblock C. De Lellis and L. Sz{\'e}kelyhidi Jr.,
	\newblock On admissibility criteria for weak solutions of the Euler equations.
	\newblock{\em Arch. Ration. Mech. Anal.}, {195} , 1, 225--260, 2010.
	
	
	{\color{black}
		\bibitem{tzavaras}
		\newblock S. Demoulini, D. M. A. Stuart and A. E. Tzavaras,
		\newblock	A variational approximation scheme for three-dimensional elastodynamics with polyconvex energy.
		\newblock{\em Arch. Ration. Mech. Anal.}, 157, 4, 325--344, 2001
	    
	    \bibitem{tzavaras-weak-strong}
	    \newblock S. Demoulini, D. M. A. Stuart, and A. E. Tzavaras,
	    \newblock Weak-strong uniqueness of dissipative measure-valued solutions for polyconvex elastodynamics.
	    \newblock{\em Arch. Ration. Mech. Anal.}, 205, 3, 927--961, 2012.
}
	
	{\color{black}
	\bibitem{Diperna_measure-valued}
	\newblock R. J. DiPerna,
	\newblock Measure-valued solutions to conservation laws.
	\newblock{\em Arch. Ration. Mech. Anal.}, {88}, 223--270, 1985.}
	
	\bibitem{DiPerna}
	\newblock R. J. DiPerna,
	\newblock Uniqueness of solutions to hyperbolic conservation laws.
	\newblock{\em Indiana Univ. Math. J.}, 28, 1, 137--188, 1979.
	
	
	
	
	\bibitem{FGGW}
	\newblock E. Feireisl, P. Gwiazda, A. \'Swierczewska-Gwiazda and E. Wiedemann,
	\newblock Regularity and energy conservation for the compressible {E}uler equations.
	\newblock {\em Arch. Ration. Mech. Anal.}, { 223}, 3, 1375--1395, 2017.
	
	
	\bibitem{FGJ}
	\newblock E. Feireisl, S. S. Ghoshal and A. Jana,
	\newblock On uniqueness of dissipative solutions to the isentropic	Euler system.
	\newblock{\em Comm. Partial Differential Equations}, 44, 12, 1285--1298, 2019.
	
	\bibitem{FeiKre}
	\newblock E. Feireisl and O. Kreml,
	\newblock Uniqueness of rarefaction waves in multidimensional compressible Euler system.
	\newblock {\em J. Hyperbolic Differ. Eq.}, 12, no. 3, 489--499, 2015.
	
	\bibitem{FeKeVa}
	\newblock E. Feireisl, O. Kreml, and A. Vasseur,
	\newblock Stability of the isentropic Riemann solutions of the full multidimensional Euler system.
	\newblock {\em SIAM J. Math. Anal.}, 47, no. 3, 2416--2425, 2015.
	
	\bibitem{GJ}
	\newblock S. S. Ghoshal and A. Jana,
	\newblock Uniqueness of dissipative solutions to the complete Euler system.
	\newblock {\em preprint}, arXiv:1905.06919, 2019.
	
	\bibitem{stochastic}
	\newblock S. S. Ghoshal, A. Jana and B. Sarkar,
	\newblock Uniqueness and energy balance for isentropic Euler equation with stochastic forcing.
	\newblock {\em preprint}, 2020.
	

	
{\color{black}	\bibitem{GKS}
	\newblock P. Gwiazda, O. Kreml and A. \'Swierczewska-Gwiazda,
	\newblock Dissipative measure-valued solutions for general conservation laws.
	\newblock{\em Ann. Inst. H. Poincare Anal. Non Lin\'eaire}, 37, 3, 683--707,
2020.
}
	
	\bibitem{GMS}
	\newblock P. Gwiazda, M. Mich\'alek,and A. \'Swierczewska-Gwiazda, 
	\newblock A note on weak solutions of conservation laws and energy/entropy conservation. 
	\newblock{\em Arch. Ration. Mech. Anal.}, 229, 3, 1223--1238, 2018.
	
	
	\bibitem{KS19}
	\newblock K. Koumatos and S. Spirito,
	\newblock Quasiconvex elastodynamics: weak-strong uniqueness for measure-valued solutions.
	\newblock{\em	Comm. Pure Appl. Math.}, 72, 6, 1288--1320, 2019.
	
		
	\bibitem{KV}
	\newblock K. Koumatos and A. Vikelis,
	\newblock $\mathcal{A}$-quasiconvexity, G\r{a}rding inequalities and applications in PDE constrained problems in dynamics and statics.
	\newblock {\em preprint}, arXiv:2005.12803, 2020. 
	

	\bibitem{LaTz}
      \newblock C. Lattanzio and A. E. Tzavaras,
      \newblock Structural properties of stress relaxation and convergence from viscoelasticity to polyconvex elastodynamics.
      \newblock{\em Arch. Ration. Mech. Anal.}, 180, no. 3, 449--492, 2006.
      

	
	\bibitem{vasseur}
	\newblock N. Leger and A. Vasseur,
	\newblock Relative entropy and the stability of shocks and contact discontinuities for systems of conservation laws with non-BV perturbations.
	\newblock{\em Arch. Ration. Mech. Anal.}, 201, 1, 271--302, 2011.
	
	
  \bibitem{Qin}
	\newblock T. Qin,
	\newblock Symmetrizing nonlinear elastodynamic system.
	\newblock{\em Journal of Elasticity}, 50, 3, 271--302, 245--252, 1998.
	
	
	\bibitem{Serre}
        \newblock D. Serre,
      \newblock Hyperbolicity of the nonlinear models of {M}axwell's equations.
      \newblock {\em Arch. Ration. Mech. Anal.} 172, 3, 309--331, 2004.
	

	
	\bibitem{Smo}
	\newblock J. Smoller,
	\newblock Shock waves and reaction-diffusion equations.
	\newblock Grundlehren der Mathematischen Wissenschaften [Fundamental Principles of Mathematical Science], 258, {\em Springer-Verlag, New York-Berlin}, xxi+581, 1983.
	

	\bibitem{Wid18}
	\newblock E. Wiedemann,
	\newblock Weak-strong uniqueness in fluid dynamics.
	\newblock Partial differential equations in fluid mechanics, 289--326, {\em London Math. Soc. Lecture Note Ser.}, 452,{\em Cambridge Univ. Press, Cambridge}, 2018.
	
	
	\end{thebibliography}
\end{document}